\providecommand\@dotsep{5}
\def\listtodoname{List of Todos}
\def\listoftodos{\@starttoc{tdo}\listtodoname}
\numberwithin{equation}{section}
\def\Xint#1{\mathchoice 
  {\XXint\displaystyle\textstyle{#1}}%
  {\XXint\textstyle\scriptstyle{#1}}%
  {\XXint\scriptstyle\scriptscriptstyle{#1}}%
  {\XXint\scriptscriptstyle\scriptscriptstyle{#1}}%
  \!\int} 
\def\XXint#1#2#3{{\setbox0=\hbox{$#1{#2#3}{\int}$} 
  \vcenter{\hbox{$#2#3$}}\kern-.5\wd0}} 
\def\-int{\Xint -}
\newcommand{\R}{\mathbb{R}}
\newcommand{\N}{\mathbb{N}}
\newcommand{\ri}{\rightarrow}
\DeclareMathOperator{\dive}{div}
\DeclareMathOperator{\B}{\mathcal{B}}
\DeclareMathOperator{\Q}{\mathcal{Q}}
\DeclareMathOperator{\A}{\mathcal{A}}
\DeclareMathOperator{\V}{\mathcal{\varphi}}
\DeclareMathOperator{\e}{\varepsilon}
\DeclareMathOperator{\dist}{dist_{\mathcal{P}}}
\newtheorem{lem}{Lemma}[section]
\newtheorem{thm}{Theorem}[section]
\newtheorem{defn}{Definition}[section]
\newtheorem{cor}{Corollary}[section]
\newtheorem{ex}{Example}[section]
\newtheorem{remark}{Remark}[section]
\newtheorem{assumption}[thm]{Assumption}
\begin{document}
\title[$\A$-caloric approximation and partial regularity]{$\A$-caloric approximation and partial regularity for parabolic systems with Orlicz growth}
\author[Foss]{Mikil Foss}
\author[Isernia]{Teresa Isernia}
\author[Leone]{Chiara Leone}
\author[Verde]{Anna Verde}

\address[M.\ Foss]{Department of Mathematics 
\newline\indent University of Nebraska-Lincoln
\newline\indent 203 Avery Hall, Lincoln, NE 68588-0130, USA.}
\email{mikil.foss@unl.edu}

\address[T.\ Isernia]{Dipartimento di Ingegneria Industriale e Scienze Matematiche
	\newline\indent
	Universit\`a Politecnica delle Marche
	\newline\indent
	Via Brecce Bianche 12, 60131 Ancona, Italy}
\email{t.isernia@univpm.it}

\address[C.\ Leone \& A.\ Verde]{Dipartimento di Matematica ``R. Caccioppoli''
\newline\indent
Universit\`a degli Studi di Napoli Federico II
\newline\indent
Via Cinthia, Complesso Universitario di Monte S. Angelo, 80126 Napoli, Italy}
\email{chiara.leone@unina.it}
\email{anna.verde@unina.it}

\subjclass[2010]{35B65, 35K40, 46E30}
\keywords{Parabolic systems, $\A$-caloric approximation, Orlicz growth, partial regularity}

\begin{abstract}
We prove a new $\A$-caloric approximation lemma compatible with an Orlicz setting. With this result, we establish a partial regularity result for parabolic systems of the type
$$
u_{t}- \dive a(Du)=0.
$$
Here the growth of $a$ is bounded by the derivative of an $N$-function $\V$. The primary assumption for $\V$ is that $t\V''(t)$ and $\V'(t)$ are uniformly comparable on $(0,\infty)$.
\end{abstract}

\maketitle

\section{Introduction}\label{S:Intro}

\noindent
In this paper, we establish a partial regularity result for weak solutions to parabolic systems with  general growth. By partial regularity, we mean H\"{o}lder continuity for the spatial gradient outside a closed set with zero measure. Let $\Omega \subset \R^{n}$ be an open bounded set, $n\geq 2$, $T>0$, and $N\geq 1$; we consider weak solutions $u:\Omega_{T}\ri \R^{N}$, where $\Omega_{T}= \Omega \times (-T, 0)$, to the following homogeneous  parabolic system 
\begin{equation}\label{P}
u_{t}- \dive a(Du)=0 \quad \mbox{ in } \Omega_{T}, 
\end{equation}
where the $C^{1}$-vector field $a:\R^{Nn}\ri \R^{Nn}$ satisfies ellipticity and growth conditions in terms of Orlicz functions. The precise structural assumptions on the vector field $a$ will be presented later, but the principal prototype we have in mind is the parabolic $\V$-Laplacian system
\begin{equation}\label{philaplacian}
u_{t}- \dive \left( \frac{\V'(\mu+|Du|)}{\mu+|Du|} Du\right)=0,
\end{equation}  
where $\mu> 0$ and $\V$ is an Orlicz function (see Subsection \ref{prel}). 
In the model case, with $\V(s)=\frac{1}{p}s^{p}$ for some $p>1$, \eqref{philaplacian} gives the more familiar non-degenerate evolutionary $p$-Laplacian system:
\begin{equation}\label{Pp}
u_{t}- \dive \left((\mu+|Du|)^{p-2} Du\right)=0.
\end{equation}
Hence system \eqref{philaplacian} (and consequently \eqref{P}) can be seen as a generalization of the $p$-Laplacian parabolic system \eqref{Pp}. In particular, in addition to not requiring the system~\eqref{P} to have the standard $p$-growth, we do not assume an Uhlenbeck structure as in\eqref{philaplacian}.

\noindent
The literature is rich with regularity results for parabolic systems with standard $p$-growth. In the paper by DiBenedetto and Friedman~\cite{DBF1}, everywhere regularity is proved. In this paper, the system has an Uhlenbeck structure: $a(\xi)=|\xi|^{p-2}\xi$ and $p>\frac{2n}{n+2}$. Using a combination of the Moser and De Giorgi iteration schemes, the solution's spatial gradient is shown to be bounded and H\"{o}lder continuous on its domain. In~\cite{DBF2}, the authors extended their result to allow nonlinear forcing and introduced the intrinsic scaling which accommodates the singular ($p<2$) or degenerate ($p>2$) behavior of $a$ in a natural way. (For a comprehensive introduction and collection of results on the subject, we refer the reader to DiBenedetto's book \cite{DB}.) It is well-known that, without special structural assumptions, solutions to systems can only be expected to possess partial regularity, that is regularity on an open set of full measure. Giaquinta and Giusti~\cite{GiaGiu} provided the first result in this direction. Adapting a blow-up argument, successfully used for elliptic systems, they showed partial H\"{o}lder continuity for the weak solution, $u$, of nondegenerate systems with $p$-growth ($p\ge2$). By again adapting techniques for elliptic systems, Giaquinta and Struwe proved higher integrability and partial H\"{o}lder continuity for a solution's spatial gradient, $Du$, provided $a$ has quadratic growth. For a general nonlinear $a(z,u,\xi)$ with quadratic growth, partial regularity for the spatial gradient remained an open problem until the work of Duzaar and Mingione~\cite{DM}. In this transformative paper, the authors introduced the, now well-known, $\mathcal{A}$-caloric approximation approach to regularity theory for parabolic systems. Generalizations to problems with superquadratic or subquadratic growth were provided by Duzaar, Mingione, and Steffen~\cite{DMS} and Scheven~\cite{scheven}. Utilizing intrinsic scaling and $p$-caloric approximation, along with $\mathcal{A}$-caloric approximation, B\"{o}gelein, Duzaar, and Mingione~\cite{BDM} extended these results to $p$-growth systems, of the form~\eqref{P}, that are potentially degenerate ($p>2$) or singular $\frac{2n}{n+2}<p<2$. Without any attempt for completeness, we also mention to the papers \cite{BDM, ba, BFM, FG, mons} where the partial H\"older continuity, either for the solution's spatial gradient or the solution itself, is established.

\smallskip

\noindent
The main goal of this paper is to extend some of these partial regularity results into the Orlicz-growth setting. In the papers cited above, the superquadratic $p\ge2$ and subquadratic $p<2$ cases require different techniques. Working in an Orlicz setting, we provide a unified treatment for both system classes.
\smallskip

\noindent
There is a long history of interest in partial differential equations with nonstandard growth. Early existence results for both elliptic and parabolic problems were established by Donaldson~\cite{Don1,Don2} (see also~\cite{Vis}). For elliptic equations and scalar-valued variational problems with $(p,q)$-growth, Marcellini~\cite{MarcARMA,MarcJDE} developed an approximation and Moser iteration technique to prove everywhere regularity. For elliptic systems with an Uhlenbeck structure, Marcellini and Papi~\cite{MarcPap} extended this strategy to even allow problems oscillating between linear and exponential growth (see also~\cite{MarcJMAA}). Under general growth conditions, additional results for elliptic systems can be found, for example, in~\cite{BV, CiaVlad1, CiaVlad2,CO, DE, DLSV, DSV, DSV1} and the references therein. Regarding regularity for parabolic systems with general growth, much less work is available in the literature. Assuming an Uhlenbeck structure, the iteration strategies developed in~\cite{DBF1,DBF2} have been adapted to problems with form~\eqref{P}. Assuming $t\V'(t)$ and $\V(t)$ are comparable on $(0,\infty)$, Lieberman~\cite{Lieb} proved that a weak solution $u$ to~\eqref{philaplacian} has a H\"older continuous spatial gradient $Du$, provided $Du$ is already known to be bounded. In~\cite{You}, You removed the boundedness assumption, but under stricter growth assumptions. More recently, the boundedness of $|Du|$ has been established under more general conditions. In~\cite{DSS}, Diening, Scharle, and Schwarzacher assume $t\V''(t)$ and $\V'(t)$ to be comparable, while in~\cite{Isernia}, only a doubling property is needed to obtain the boundedness of $u$. For additional regularity and higher integrability results, where an Uhlenbeck or similar structure is assumed, we also mention~\cite{BD,BDMar,BK,DSSV}. Without such a structural assumption, higher integrability was established by H\"{a}st\"{o} and Ok in~\cite{ho}. As far as the authors are aware, the current paper is the first to establish the partial H\"{o}lder continuity of a weak solution's spatial gradient.

\smallskip

\noindent
We now list the specific assumptions needed (see also Section \ref{prel}).

\begin{assumption}\label{assumption} 
Let $\V\in C^{1}([0, \infty))\cap C^{2}(0, \infty)$ be an $N$-function satisfying
\begin{align*}
0<p_{0}-1 \leq \inf_{t>0} \frac{t \V''(t)}{\V'(t)}\leq \sup_{t>0} \frac{t \V''(t)}{\V'(t)}\leq p_{1}-1, 
\end{align*} 
with $\frac{2n}{n+2}<p_{0}\leq p_{1}$. Without loss of generality we can assume that $p_{0}<2<p_{1}$. 
\end{assumption}

\noindent
With this $\V$, we consider \eqref{P} under the following hypotheses on the $C^1$-vector field $a: \R^{Nn}\rightarrow \R^{Nn}$: 
\smallskip
 
\begin{compactenum}
\item [$(a_{1})$] there exists $L>0$ such that 
\begin{align*}
&|a(\xi)|\leq L \V'(1+|\xi|) 
\end{align*}
holds for every $\xi \in \R^{Nn}$;
\item [$(a_{2})$] there exists $\nu>0$ such that 
\begin{align*}
Da(\xi)(\eta,\eta)\ge\nu\V''(1+|\xi|)|\eta|^2
\end{align*}
holds for any $\xi, \eta \in \R^{Nn}$;
\item [$(a_{3})$] for every $\xi \in \R^{Nn}$  
\begin{align*}
|Da(\xi)|\le L\V''(1+|\xi|);
\end{align*}
\item [$(a_{4})$] there exists a nondecreasing and concave function $\omega: [0, \infty) \ri [0, 1]$ with $\omega(0)=0$ such that 
\begin{align*}
|Da(\xi)-Da(\eta)|\le L\omega\left(\frac{|\xi-\eta|}{1+|\xi|+|\eta|}\right)\V''(1+|\xi|+|\eta|) 
\end{align*}
for every $\xi,\eta \in \R^{Nn}$. 
\end{compactenum}
While it ensures $t\V''(t)$ is comparable $\V'(t)$ for $t>0$, Assumption~\ref{assumption} does not imply $\V$ has $p$-growth. It does imply $\V$ has the doubling property~\eqref{E:dbling}, and therefore, there exists a $p_0\le p\le p_1$ and a $C<\infty$ such that for each $\varepsilon>0$
\begin{equation*}
\lim_{t\to\infty} \frac{\V(t)}{t^{p+\varepsilon}}= \lim_{t\to\infty} \frac{t^{p-\varepsilon}}{\V(t)}=0.
\end{equation*}
Similar assumptions also appear in several of the works cited above. 
\smallskip

\noindent
The notion of weak solution adopted in the present paper is the following: $u\in C^0(-T,0, L^{2}(\Omega, \R^{N}))\cap L^{1}(-T,0, W^{1, 1}(\Omega,\R^N))$ with $\V(|Du|)\in L^{1}(-T,0, L^{1}(\Omega))$ is a weak solution to \eqref{P} if it holds 
\begin{equation*}
\int_{\Omega_T} u\cdot\eta_{t} - a( Du)\cdot D\eta \,dx\,dt =0,
\end{equation*}
for all $\eta \in C^{\infty}_{c}(\Omega_T, \R^{N})$. Here $Du:(-T,0)\times\Omega\to\R^{Nn}$ denotes the spatial gradient of $u$.
\smallskip

\noindent
We can now state our regularity result. 
\begin{thm}\label{main}
Let $u\in C^0(-T,0; L^{2}(\Omega, \R^{N}))\cap L^{1}(-T,0; W^{1, 1}(\Omega,\R^N))$ with $\V(|Du|)\in L^1(-T,0;L^1(\Omega))$ be a weak solution to \eqref{P} in $\Omega_T$ under hypotheses $(a_{1})$-$(a_{4})$ and Assumption \ref{assumption}. Then for every $\alpha\in (0,1)$ there exists an open subset $\Omega_0\subseteq\Omega_T$ such that 
$$
V(Du)\in C_{loc}^{0,\frac{\alpha}{2},\alpha}(\Omega_0,\R^{Nn}) \qquad \hbox{ and }\qquad |\Omega_T\setminus\Omega_0|=0,
$$
where $V(\xi)=\sqrt{\frac{\V'(1+|\xi|)}{1+|\xi|}}\xi$.
Moreover, the singular set $\Omega_T\setminus\Omega_0\subseteq\Sigma_1\cup\Sigma_2$, where 
\begin{align*}
&\Sigma_1=\left\{z_0\in\Omega_T : \liminf_{\rho\to 0}\-int_{\Q_\rho(z_0)}|V(Du)-(V(Du))_{z_0,\rho}|^2dz>0\right\},\\
&\Sigma_2=\left\{z_0\in\Omega_T : \limsup_{\rho\to 0}|(Du)_{z_0,\rho}|=+\infty\right\},
\end{align*}
denoting the mean value of a function over the parabolic cylinder $\Q_\rho(z_0)=\B_\rho(x_0)\times (t_0-\rho^2,t_0)$ by $(\cdot)_{z_0,\rho}$.
\end{thm}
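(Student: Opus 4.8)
The strategy is the now-classical blow-up/$\mathcal{A}$-caloric approximation scheme, adapted to the Orlicz setting through the $V$-function machinery. First, I would establish a \emph{Caccioppoli inequality} for weak solutions of \eqref{P}: on a parabolic cylinder $\Q_\rho(z_0)$, the spatial gradient excess $\Xint-_{\Q_\rho}|V(Du)-V(\xi_0)|^2\,dz$ (for suitable comparison affine maps with gradient $\xi_0$) is controlled by a lower-order term involving $u$ minus an affine function. Here the convexity/monotonicity encoded in $(a_2)$ and the comparability $t\V''(t)\sim\V'(t)$ are used to pass between $\V$-quantities and the quadratic quantity $|V(\cdot)|^2$; the doubling property \eqref{E:dbling} keeps all error terms controllable. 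Next, the \textbf{linearization}: writing $w=u-\ell$ for an affine (in $x$) comparison map $\ell$ with gradient $\xi_0=(Du)_{z_0,\rho}$, one checks that $w$ is \emph{approximately} $\mathcal{A}$-caloric with $\mathcal{A}=Da(\xi_0)$, with the defect governed by the modulus $\omega$ from $(a_4)$ and by the excess itself. Applying the \emph{$\mathcal{A}$-caloric approximation lemma} (the new Orlicz-compatible version announced in the abstract and proved earlier in the paper) produces an honest $\mathcal{A}$-caloric map close to $w$ in $L^2$, whose higher regularity (interior $C^\infty$ estimates for constant-coefficient linear parabolic systems) transfers, after rescaling, into an \textbf{excess-decay estimate}: there are $\theta\in(0,1)$ and constants so that, as long as $|\xi_0|$ stays bounded and the excess is small,
\[
\Xint-_{\Q_{\theta\rho}(z_0)}\bigl|V(Du)-(V(Du))_{z_0,\theta\rho}\bigr|^2\,dz \;\le\; \tfrac12\,\theta^{2\alpha}\Xint-_{\Q_{\rho}(z_0)}\bigl|V(Du)-(V(Du))_{z_0,\rho}\bigr|^2\,dz \;+\; (\text{small error}).
\]

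The third step is the \textbf{iteration}. Defining the combined excess functional $\Phi(z_0,\rho)=\Xint-_{\Q_\rho}|V(Du)-(V(Du))_{z_0,\rho}|^2\,dz + (\text{a term measuring } |(Du)_{z_0,\rho}|)$, one shows by induction on the scales $\rho_k=\theta^k\rho_0$ that if $z_0\notin\Sigma_1\cup\Sigma_2$ — equivalently, if for some radius both $\Phi(z_0,\rho_0)$ is below a threshold $\varepsilon_0$ and $|(Du)_{z_0,\rho_0}|$ is bounded — then the excess-decay estimate propagates: $\Phi(z_0,\rho_k)\le C\theta^{2\alpha k}$ for all $k$, and moreover this persists on a whole parabolic neighborhood of $z_0$ by absolute continuity of the integrals and continuity of the mean values in the parameters. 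This gives an open set $\Omega_0$ containing the complement of $\Sigma_1\cup\Sigma_2$ on which the excess decays at a fixed polynomial rate. Standard arguments (a Campanato-type characterization adapted to parabolic cylinders, using both spatial oscillation and the scaled time oscillation) then upgrade the decay to $V(Du)\in C^{0,\alpha/2,\alpha}_{loc}(\Omega_0)$ with the stated anisotropic Hölder exponents. Finally, $|\Omega_T\setminus\Omega_0|=0$ follows because $\Sigma_1$ has measure zero by the Lebesgue differentiation theorem applied to $V(Du)\in L^2_{loc}$ (the $\liminf$ of the mean oscillation vanishes a.e.), and $\Sigma_2$ has measure zero since $|(Du)_{z_0,\rho}|$ converges for a.e.\ $z_0$ to the finite value $|Du(z_0)|$.

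\textbf{Main obstacle.} The delicate point is the excess-decay/linearization step in the Orlicz regime: one must track how the nonlinearity's lack of $p$-growth interacts with the $V$-function. Concretely, the passage between $\Xint-\V(|Du-\xi_0|)$-type quantities and $\Xint-|V(Du)-V(\xi_0)|^2$ is only a two-sided equivalence \emph{up to constants depending on $|\xi_0|$}, so the estimates must be made uniform on the region where $|(Du)_{z_0,\rho}|$ is controlled — this is precisely why $\Sigma_2$ must be excluded — and the smallness needed to apply the $\mathcal{A}$-caloric approximation lemma has to be shown to be \emph{self-improving} along the iteration despite these $|\xi_0|$-dependent constants. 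Handling the subquadratic range $p_0<2$ and the superquadratic range $p_1>2$ simultaneously, as the paper advertises, requires that every such equivalence be stated in a form insensitive to whether $\V$ is above or below the quadratic threshold; the $V$-function estimates (monotonicity, shift-change, Young-type inequalities) available from the preliminaries are exactly what make this uniform treatment possible, and assembling them correctly is the technical heart of the argument.
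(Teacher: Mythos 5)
Your outline follows the same general architecture as the paper (Caccioppoli inequality, linearization via $(a_4)$, the Orlicz $\A$-caloric approximation, excess decay, iteration, Campanato embedding), but there is a genuine gap at the point where you pass from the definition of the regular set to the starting hypothesis of the iteration. The sets $\Sigma_1,\Sigma_2$ are defined through the \emph{gradient-level} oscillation $\-int_{\Q_\rho}|V(Du)-(V(Du))_{z_0,\rho}|^2dz$, whereas the decay/iteration machinery (and the $\A$-caloric approximation lemma itself) requires smallness of a \emph{function-level} excess: in the paper this is $\Psi_a(z_0,\rho,\ell)=\-int_{\Q_\rho}\bigl(|\tfrac{u-\ell}{\rho}|^2+\V_a(|\tfrac{u-\ell}{\rho}|)\bigr)dz$, which feeds the Caccioppoli inequality and produces the bounds \eqref{E:ACaloricBnd}--\eqref{E:NormBnd} (including the $\sup_t\int|v|^2$ term) needed for Theorem \ref{T:ApproxLemma}. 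Converting smallness of the gradient oscillation at $z_0\notin\Sigma_1\cup\Sigma_2$ into smallness of $\Psi$ is not a routine Poincar\'e step: $u$ has no a priori regularity in time, so one must use the equation itself (Lemma \ref{L:PoincareLemma} and Theorem \ref{poincare}, proved via Steklov averages and a weighted mean in time), and, because $p_0<2$, the quadratic term $\-int|\tfrac{u-\ell}{\rho}|^2$ cannot be absorbed into the Orlicz term; the paper needs the separate Sobolev--Poincar\'e inequality of Theorem \ref{sp2}, built on a Gagliardo--Nirenberg inequality from~\cite{ho} combined with Caccioppoli and an interpolation/hole-filling argument. Your proposal never mentions any such solution-adapted Poincar\'e or Sobolev--Poincar\'e inequality, and without it the claimed equivalence ``$z_0\notin\Sigma_1\cup\Sigma_2$ iff the excess is small at some radius'' does not connect to the quantities the approximation lemma actually controls; this is exactly the step the paper singles out as ``rather complicated.''

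Two smaller points. First, your decay estimate is stated with an additive ``small error''; as written this does not iterate, and the paper instead proves a clean multiplicative decay $\Psi_{1+|D\ell_{\theta\rho}|}(\theta\rho)\le\theta^{2\alpha}\Psi_{1+|D\ell_\rho|}(\rho)$, which requires the quasi-minimality of $\ell_{z_0,\rho}$ (Lemmas \ref{quasimin}, \ref{quasimin2}) and the shift-change estimates (Lemmas \ref{shift}, \ref{L:ShiftComp}) to handle the change of the shift parameter $1+|D\ell|$ from one scale to the next together with the control $|D\ell_{\theta^j\rho_0}|\le M_0-\theta_0^{j\alpha}$; you gesture at this difficulty in your ``main obstacle'' paragraph but do not supply the mechanism. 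Second, your treatment of $|\Omega_T\setminus\Omega_0|=0$ via Lebesgue differentiation for $V(Du)\in L^2_{loc}$ and for $Du\in L^1$ is fine and consistent with the paper.
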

\noindent
Here $z_0=(x_0,t)\in(-T,0)\times\Omega$ and $\B_\rho(x_0)$ is the ball in $\R^n$ with radius $\rho$ centered at $x_0$. Note that the H\"older continuity of $V(Du)$ implies the H\"older continuity of $Du$ with a different exponent depending on $\V$. 

\noindent
The proof of Theorem \ref{main} relies on a decay estimate for certain excess functionals, which measure in a suitable way the oscillations of the solutions. More precisely, for $z_{0}\in \Omega_{T}$, $r>0$, $a\geq 0$, and an affine map $\ell: \R^{n}\ri \R^{N}$, we define the excess functional by 
\begin{equation*}
\Psi_{a}(z_{0}, r, \ell)= \-int_{\Q_{r}(z_{0})} \left( \left| \frac{u-\ell}{r}\right|^{2} + \V_{a} \left( \left| \frac{u-\ell}{r}\right| \right) \right) \, dz.
\end{equation*}
Here $\V_a$ interpolates in a certain sense between $t^2$, when $t\le a$, and $\V$, when $t\ge a$. (The precise definition of the function $\V_a$ is given in Section \ref{prel}.)

\noindent
In order to achieve the decay estimate, we first derive the Caccioppoli inequality which is compatible with \eqref{P} (see Theorem \ref{cacc2}). This, in particular, allows us to control the spatial oscillations of $u$ and oscillations in $Du$ via the excess functional. Though $u$ need not be differentiable, if $\Psi_a$ is sufficiently small, then a family of smooth approximations to a spatial linearization of $u$, centered at $z_0$, can be produced. These 1st-order surrogates for $u$ are, in fact, solutions to a constant coefficient parabolic system. Moreover, their approximation to $u$ improves as $r\to0^+$ providing a decay estimate for the $\Psi_a$, which implies the oscillations in $V(Du)$ decrease as $r\to0^+$. The rate of decrease is fast enough to deliver the regularity of $V(Du)$ through Campanato's characterization of H\"{o}lder continuity.

\noindent
The outline of the proof of Theorem~\ref{main} follows the approach developed in~\cite{DMS} and~\cite{scheven}. The cornerstone to the strategy is the $\A$-caloric approximation theorem, which provides the family of approximations to $u$. The generalization of this theorem to something suitable for the Orlicz setting was a significant obstacle and is the paper's principal novelty. Our proof for this result does not require Assumption~\ref{assumption}. In fact, $\V$ is only assumed to be an $N$-function with super $\frac{2n}{n+2}$-growth and a doubling property near zero. Thus $\V$ may have exponential or even super-exponential growth. A key difference between the $p$-growth and Orlicz settings is that for $L^p$-spaces one has
\[
    L^p(\Q_\rho)=L^p(-\rho^2,0;L^p(\B_\rho)),
\]
but for the Orlicz spaces $L^{\V}$ one only has
\[
    L^{\V}(\Q_\rho)\subseteq L^{\V}(-\rho^2,0;L^{\V}(\B_\rho)).
\]
In fact, equality holds if and only if $\V(t)$ is comparable to $t^p$ (see the remarks following Proposition 1.3 in~\cite{Don2}). With standard growth, the proofs for the $\mathcal{A}$-caloric approximation theorem can take advantage of Simon's compactness result~\cite{Simon} in $L^p((-\rho^2;L^p(\B_\rho))$ to directly obtain convergence in $L^p(\Q_\rho)$. This, however, is not possible in the Orlicz setting. While we use Simon's result for convergence in $L^2$, upgrading to convergence in $L^{\V}(\Q_\rho)$ involves a combination of approximations via convolution, sophisticated pointwise estimates, and integral bounds for the non-centered Hardy-Littlewood maximal function. With this new $\mathcal{A}$-caloric approximation, we prove a decay estimate for the excess function. Employing a standard iteration argument, we are able to identify the singular set with points where either the excess cannot be made sufficiently small, $\Sigma_1$, or the mean $(Du)_{z_0,\rho}$ is not bounded, $\Sigma_2$. Finally, to prove that the singular set is negligible, we use a Poincar\'e-Sobolev-type inequality for solutions to~\eqref{P} which bounds the excess of $u$ in terms of its spatial gradient $Du$. The proof for this inequality is rather complicated and relies on a Gagliardo-Nirenberg inequality from~\cite{ho}.


\smallskip

\noindent
The paper is organized as follows: after collecting the basic terminology and other preliminaries in Section~\ref{prel}, we present the $\A$-caloric approximation in Section~\ref{Acal}. In Section~\ref{Caccioppoli}, the proofs of the Cacciopoli inequalities in the parabolic setting. We detail the Poincar\'e-Sobolev-type inequalities in Section~\ref{poinc} and the linearization in Section~\ref{lin}. We finally establish the decay estimates and the main theorem in the last two sections.


\section{Notation and Preliminary Results}\label{prel}

\noindent
Let $\Omega \subset \R^{n}$ be a bounded domain; in the following $\Omega_{T}$ will denote the parabolic
cylinder $\Omega \times (-T, 0)$, where $T>0$. If $z\in \Omega_{T}$, we denote $z=(x,t)$ with $x\in \Omega$
and $t\in (-T, 0)$. In what follows $C$ will be often a general positive constant, possibly varying from line to line, but depending on only the structural parameters $n,N,L/\nu,p_0,p_1$, with $1<p_0,p_1<\infty$ identified in Assumption~\ref{assumption} below. The notation $Du(x,t)\equiv D_{x} u(x,t)$ denotes the differentiation with respect to the spatial variable $x$, and $u_{t}$ stands for the differentiation with respect to the time variable. 

\noindent
With $x_{0}\in \R^{n}$, we set 
$$
\B_{r}(x_{0}):=\{x\in \R^{n} : |x-x_{0}|<r\}
$$
the open ball of $\R^{n}$ with radius $r>0$ and center $x_{0}$. When dealing with parabolic regularity, the geometry of cylinders plays an important role. 
We denote the general cylinder with spatial radius $\rho$ and time length $\tau$ centered at $z_0 =(x_0,t_0)$ by
$$
\Q_{\rho,\tau}(z_0)=\B_\rho(x_0)\times (t_0-\tau,t_0),
$$
and we define the standard parabolic cylinder  by
$$
\Q_{\rho}(z_0)=\Q_{\rho,\rho^2}(z_0)=\B_\rho(x_{0})\times (t_{0}-\rho^{2}, t_{0}). 
$$
Given a cylinder $\Q=\B\times (s,t)$, its parabolic boundary is 
$$
\partial_{\mathcal{P}}\!\Q :=(\B \times \{s\})\cup (\partial\!\B \times [s,t]).
$$
The integral averages of a function $u$ on $\Q\subset \R^{n+1}$ are given by
$$
(u)_{\Q}(t)=\-int_{\Q} u(x, t) dx, \ \ \  (u)_{\Q}=\-int_{\Q} u(x,t)\,dz.
$$
We will denote the average $(u)_{\Q_\rho(z_0)}$ by $(u)_{z_0,\rho}$.
The parabolic metric is defined as usual by 
$$
\dist (z, z_{0}):= \sqrt{|x-x_{0}|^{2}+|t-t_{0}|} 
$$
whenever $z=(x,t),z_{0}=(x_{0},t_{0})\in \R^{n+1}$. 

\bigskip

\noindent
We recall that a strongly elliptic bilinear form $\mathcal{A}$ on $\R^{Nn}$ with ellipticity constant $\nu>0$ and upper bound $L>0$ means that 
\begin{equation*}
\nu |\xi|^2 \leq \mathcal{A}(\xi, \xi), \quad \mathcal{A}(\xi, \tilde{\xi}) \leq L|\xi||\tilde{\xi}| \quad \forall \xi,\tilde{\xi}\in \R^{Nn}. 
\end{equation*}
\begin{defn}
We shall say that a function $h\in L^{2}(t_0-\rho^2,t_0; W^{1, 2}(\B_{\rho}(x_0), \R^{N}))$ is $\mathcal{A}$-caloric on $\Q_{\rho}(z_0)$ if it satisfies
\begin{equation*}
\int_{\Q_{\rho}(z_0)} h\cdot \eta_{t} - \mathcal{A}(Dh, D\eta)\, dz=0, \quad \mbox{ for all } \eta\in C^{\infty}_{c}(\Q_{\rho}(z_0), \R^{N}). 
\end{equation*}
\end{defn}

\begin{remark}\label{ut}
In the following we shall often write $u_t$ even if a weak solution of a parabolic system may not be differentiable in the time variable.  The arguments can be made
rigorous by the use of a smoothing procedure in time, as for instance via Steklov averages. However, since this argument is by now quite standard, we shall abuse the notation $u_t$ proceeding formally, without further explanation.
\end{remark}


\medskip

\subsection{$N$-functions}

We begin recalling the notion of $N$-functions (see \cite{raoren}).

\noindent
We write $f\sim g$, and we say that $f$ and $g$ are equivalent, if there exist constants $c_{1}, c_{2} >0$ such that $c_{1}g(t) \leq f(t) \leq c_{2}g(t)$ for any $t\ge 0$. Similarly the symbol $\lesssim$ stands for $\le$ up to a constant.

\begin{defn}
A real convex function $\varphi: [0, \infty)\rightarrow [0, \infty)$ is said to be an $N$-function if $\varphi(0)=0$ and there exists a right continuous nondecreasing derivative $\V'$ satisfying $\varphi'(0)=0$, $\varphi'(t)>0$ for $t>0$ and $\displaystyle{\lim_{t\rightarrow \infty} \varphi'(t)=\infty}$. 
\end{defn}

\noindent 
An $N$-function $\V$ satisfies the $\Delta_{2}$-condition, and we write $\V\in \Delta_{2}$, if there exists a constant $c>0$ such that  
\begin{equation}\label{E:dbling}
\varphi(2t)\leq c\,\varphi(t) \quad \mbox{ for all } t\geq 0.
\end{equation}
The smallest possible constant will be denoted by $\Delta_{2}(\V)$. Combining $\V(t)\leq \V(2t)$ together with the $\Delta_{2}$-condition we get $\V(2t)\sim \V(t)$. 

\noindent
The {\it conjugate function} $\V^{*}:[0, +\infty) \ri [0, +\infty)$ of an $N$-function $\V$ is defined by 
\begin{align*}
\V^{*}(t) := \sup_{s\geq 0}\, [s t -\V(s)] \quad \mbox{ for all } t\geq 0.
\end{align*}
It holds that $\V^{*}$ itself is an $N$-function. 
If $\V$ and $\V^{*}$ both satisfy the $\Delta_{2}$-condition, then we will write that $\Delta_{2}(\V, \V^{*}):=\max\{\Delta_2(\V),\Delta_2(\V^*)\}<\infty$. 
Assume that $\Delta_{2}(\V, \V^{*})<\infty$; then for all $\delta>0$ there exists $c_{\delta}$ depending only on $\Delta_{2}(\varphi, \varphi^{*})$ such that
for all $s, t\geq 0$ it holds the Young's inequality
\begin{align*}
&t\, s\leq \delta \, \varphi(t)+c_{\delta} \, \varphi^{*}(s). 
\end{align*}

\noindent In most parts of the paper 
 we will assume that $\V$ satisfies Assumption \ref{assumption}.


\begin{remark}
We remark that the lower bound on $p_0$ appearing in Assumption \ref{assumption} is absolutely natural to prove regularity in such a context:  one need only to consider the power case $\V(t)=t^{p}$ (see \cite{DB}).
\end{remark}

\noindent
Under Assumption \ref{assumption} on $\V$ it follows from \cite[Proposition 2.1]{CO} that $\Delta_2(\V,\V*)<\infty$ and 
\begin{align*}
p_{0}\leq \inf_{t>0} \frac{t\V'(t)}{\V(t)}\leq \sup_{t>0} \frac{t\V'(t)}{\V(t)}\leq p_{1}. 
\end{align*}

\noindent
We point out that the following inequalities hold for every $t\geq 0$:  
\begin{align}\begin{split}\label{t1}
&s^{p_{1}} \V(t) \leq \V(st) \leq s^{p_{0}} \V(t) \quad \mbox{ if } 0< s \leq 1, \\
&s^{p_{0}} \V(t) \leq \V(st) \leq s^{p_{1}} \V(t) \quad \mbox{ if } s \geq 1,
\end{split}\end{align}
as well
\begin{align}\begin{split}\label{t1conj}
&s^{\frac{p_0}{p_0-1}} \V^*(t) \leq \V^*(st) \leq s^{\frac{p_{1}}{p_1-1}} \V^*(t) \quad \mbox{ if } 0< s \leq 1, \\
&s^{\frac{p_{1}}{p_1-1}} \V^*(t) \leq \V^*(st) \leq s^{\frac{p_{0}}{p_0-1}} \V^*(t) \quad \mbox{ if } s \geq 1,
\end{split}\end{align}
and also 
\begin{align} \begin{split} \label{t2}
&s^{p_{1}-1} \V'(t) \leq \V'(st) \leq s^{p_{0}-1} \V'(t) \quad \mbox{ if } 0< s \leq 1, \\
&s^{p_{0}-1} \V'(t) \leq \V'(st) \leq s^{p_{1}-1} \V'(t) \quad \mbox{ if } s \geq 1. 
\end{split}
\end{align}
In particular, for $t>0$ we have 
\begin{align}\label{t3}
\V(t)\sim t\V'(t), \quad \V'(t)\sim t\V''(t), \quad \V^{*}(\V'(t)) \sim \V(t), \quad \V^{-1}(t)(\V^{*})^{-1}(t)\sim t. 
\end{align}

\noindent
Now, we consider a family of $N$-functions $\{\varphi_{a}\}_{a\geq 0}$ setting, for  $t\geq 0$,
\begin{equation*}
\varphi_{a}(t):=\int_{0}^{t} \varphi'_{a}(s) \, ds \quad \mbox{ with } \quad \varphi'_{a}(t):= \varphi'(a+t) \frac{t}{a+t}. 
\end{equation*}
\noindent
The following lemma can be found in \cite[Lemma 27]{DE}.

\begin{lem}\label{phia}
Let $\V$ be an $N$-function with $\V\in\Delta_2$ together with its conjugate. Then for all  $a\geq 0$ the function $\varphi_a$ is an $N$-function and $\{\varphi_{a}\}_{a\geq 0}$ and $\{(\varphi_{a})^{*}\}_{a\geq 0}\sim\{\varphi^*_{\varphi'(a)}\}_{a\geq 0}$ satisfy the $\Delta_{2}$-condition
uniformly in $a\geq 0$.
\end{lem}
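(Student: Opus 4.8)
The plan is to verify the defining properties of an $N$-function by inspection, and then reduce both uniform $\Delta_2$ assertions to a single pointwise equivalence for $\varphi_a$. First I would note that for $a>0$ the derivative $\varphi_a'(t)=\varphi'(a+t)\,\tfrac{t}{a+t}$ is the product of the two nonnegative, nondecreasing, right-continuous functions $t\mapsto\varphi'(a+t)$ and $t\mapsto\tfrac{t}{a+t}=1-\tfrac{a}{a+t}$; hence $\varphi_a'$ is itself nonnegative, nondecreasing and right-continuous, it vanishes at $t=0$, is positive for $t>0$, and tends to $+\infty$ as $t\to\infty$ since $\varphi'(a+t)\to\infty$ while $\tfrac{t}{a+t}\to1$. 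Thus $\varphi_a$ is convex with $\varphi_a(0)=0$, i.e.\ an $N$-function; the case $a=0$ is trivial because $\varphi_0=\varphi$.

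The key step I would establish is the uniform equivalence
\[
\varphi_a(t)\ \sim\ \varphi'(a+t)\,\frac{t^2}{a+t},\qquad a,t\ge0,
\]
with constants depending only on $\Delta_2(\varphi)$. The upper bound is immediate from $\varphi_a(t)=\int_0^t\varphi_a'(s)\,ds\le t\,\varphi_a'(t)$, using that $\varphi_a'$ is nondecreasing. For the lower bound I would restrict the integral to $(t/2,t)$, so that $\varphi_a(t)\ge\tfrac t2\varphi_a'(t/2)$, and then pass from $\varphi'(a+t/2)$ to $\varphi'(a+t)$ via $a+t\le 2(a+t/2)$ together with the elementary fact that $\Delta_2(\varphi)<\infty$ forces $\varphi'\in\Delta_2$ (from $s\varphi'(s)\le\varphi(2s)$ and $\varphi'(s)\ge\varphi(s)/s$).

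Granting this equivalence, uniform $\Delta_2$ for $\{\varphi_a\}$ follows by a direct change of variables: in $\varphi_a(2t)=\int_0^{2t}\varphi'(a+s)\tfrac{s}{a+s}\,ds$ substitute $s=2\sigma$ and bound $\varphi'(a+2\sigma)\le\varphi'(2(a+\sigma))\lesssim\varphi'(a+\sigma)$ and $\tfrac{2\sigma}{a+2\sigma}\le 2\tfrac{\sigma}{a+\sigma}$, which yields $\varphi_a(2t)\lesssim\varphi_a(t)$ with a constant independent of $a$. Since $\varphi^*$ is again an $N$-function with both $\varphi^*$ and $(\varphi^*)^*=\varphi$ in $\Delta_2$, the same two observations applied to $\varphi^*$ show that the family $\{(\varphi^*)_b\}_{b\ge0}$ is uniformly $\Delta_2$ as well.

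It then remains to identify $(\varphi_a)^*$ with $(\varphi^*)_{\varphi'(a)}$ up to $a$-independent constants, and this is the part I expect to require the most care. Using Young's equality $\varphi_a^*(\varphi_a'(t))=t\,\varphi_a'(t)-\varphi_a(t)$, and the fact that $\{\varphi_a\}$ and $\{\varphi_a^*\}$ being uniformly $\Delta_2$ forces $\varphi_a(t)\le\theta\,t\varphi_a'(t)$ with some $\theta<1$ independent of $a$, one gets $\varphi_a^*(\varphi_a'(t))\sim t\,\varphi_a'(t)$ uniformly. Writing $s=\varphi_a'(t)=\varphi'(a+t)\tfrac{t}{a+t}$, treating the regimes $t\le a$ and $t\ge a$ separately, and using $(\varphi^*)'=(\varphi')^{-1}$ (so $(\varphi^*)'(\varphi'(a))=a$), one checks that $(\varphi_a)^*(s)\sim (\varphi^*)'(\varphi'(a)+s)\,\tfrac{s^2}{\varphi'(a)+s}$; by the equivalence of the second paragraph applied to $\varphi^*$ at the shift $b=\varphi'(a)$, the right-hand side is $\sim(\varphi^*)_{\varphi'(a)}(s)$, giving the claimed relation. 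Finally, equivalence of $N$-functions with $a$-independent constants transfers uniform $\Delta_2$ from the family $\{(\varphi^*)_{\varphi'(a)}\}_a$ (a subfamily of $\{(\varphi^*)_b\}_b$) to $\{(\varphi_a)^*\}_a$, which completes the proof.
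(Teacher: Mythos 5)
Your first three steps are sound: the verification that $\varphi_a$ is an $N$-function, the uniform equivalence $\varphi_a(t)\sim\varphi'(a+t)\,t^2/(a+t)$, and the uniform $\Delta_2$ bound for $\{\varphi_a\}_{a\ge0}$ (hence also for the shifted conjugates $\{(\varphi^*)_b\}_{b\ge0}$) are all correct, with constants depending only on $\Delta_2(\varphi,\varphi^*)$. (For comparison: the paper offers no proof at all here, it simply cites \cite[Lemma 27]{DE}, so you are supplying an argument the paper defers to the literature.) The genuine gap is in your final step. To get the lower bound $\varphi_a^*(\varphi_a'(t))\gtrsim t\,\varphi_a'(t)$ you invoke ``$\{\varphi_a\}$ and $\{\varphi_a^*\}$ being uniformly $\Delta_2$ forces $\varphi_a(t)\le\theta\,t\varphi_a'(t)$ with $\theta<1$ independent of $a$.'' But the uniform $\Delta_2$ property of $\{(\varphi_a)^*\}$ is exactly (equivalent to) the inequality you are invoking, and in your own plan it is only obtained at the very end, as a consequence of the identification $(\varphi_a)^*\sim(\varphi^*)_{\varphi'(a)}$ -- which in turn rests on this inequality. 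As written, the argument is circular: the uniform lower-index bound for the shifted functions is the real content of the second half of the lemma and cannot be assumed.

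The gap is repairable, and this is where the work should go: prove $\inf_{a,t>0}\frac{t\varphi_a'(t)}{\varphi_a(t)}>1$ directly from the hypotheses on $\varphi$. Since $\varphi^*\in\Delta_2$, one has $p_0:=\inf_{t>0}\frac{t\varphi'(t)}{\varphi(t)}>1$ (standard). For $t\le a$ only monotonicity of $\varphi'$ is needed: splitting the integral at $t/2$ gives $\varphi_a(t)\le\varphi'(a+t)\,t^2\left(\frac{1}{8a}+\frac{1}{2(a+t)}\right)\le\frac34\,t\varphi_a'(t)$. For $t\ge Ka$, with $K$ chosen so that $\left(\frac{K}{K+1}\right)^2p_0>1$, use $t\varphi_a'(t)=\frac{t^2}{a+t}\varphi'(a+t)\ge\left(\frac{K}{K+1}\right)^2(a+t)\varphi'(a+t)\ge\left(\frac{K}{K+1}\right)^2p_0\,\varphi(a+t)\ge\left(\frac{K}{K+1}\right)^2p_0\,\varphi_a(t)$. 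For the intermediate range $a\le t\le Ka$, write $\varphi_a(t)\le\varphi_a(a)+(t-a)\varphi_a'(t)\le\left(t-\frac{a}{4}\right)\varphi_a'(t)$, so the ratio is at most $1-\frac{1}{4K}$. With this uniform bound in hand, your Young-equality computation and the regime analysis yielding $(\varphi_a)^*(s)\sim(\varphi^*)'(\varphi'(a)+s)\,s^2/(\varphi'(a)+s)\sim(\varphi^*)_{\varphi'(a)}(s)$ do go through, provided you also note that for a general $N$-function $\varphi'$ may have jumps or flat parts, so the identities $(\varphi^*)'=(\varphi')^{-1}$ and $(\varphi^*)'(\varphi'(a))=a$ should be replaced by the equivalences $\varphi'(a)+\varphi_a'(t)\sim\varphi'(a+t)$ and $(\varphi^*)'(\varphi'(u))\sim u$, both of which follow from $\Delta_2(\varphi,\varphi^*)<\infty$; the final transfer of uniform $\Delta_2$ from $\{(\varphi^*)_{\varphi'(a)}\}$ to $\{(\varphi_a)^*\}$ via the $a$-independent equivalence is fine.
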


\noindent
Let us observe that by the previous lemma $\varphi_{a}(t)\sim t\varphi'_{a}(t)$. Moreover, for $t\geq a$ we have $\varphi_{a}(t)\sim \varphi(t)$,  while Assumption~\ref{assumption}
provides
\begin{equation}\label{phiaquadratic}
    C^{-1}\V''(a)t^2\le\V_a(t)
    \le
    C\V''(a)t^2, \quad \hbox{ for } 0\le t\le a,
\end{equation}
since $\V'\sim t\V''$. The constant $C$ depends on only $p_0$ and $p_1$.
This implies also  that, for all $s\in [0,1]$, $a\geq 0$ and $t\in [0,a]$, 
\begin{equation}\label{phiaquadratic2}
\varphi_{a}(st)\le C^2s^{2} \varphi_{a}(t).
\end{equation}

\noindent Finally, allowing Assumption~\ref{assumption}, the following relations hold uniformly with respect to $a\geq 0$
\begin{align}\begin{split}\label{t6}
&\V_{a}(t)\sim \V''(a+t) t^{2} \sim \frac{\V(a+t)}{(a+t)^{2}}t^{2} \sim \frac{\V'(a+t)}{a+t}t^{2}, \\
&\V(a+t)\sim \V_{a}(t) + \V(a). 
\end{split}\end{align}

\begin{remark}\label{phiap0p1}
It is easy to check that if $\V$ satisfies Assumption \ref{assumption}, the same is true for $\V_a$, uniformly with respect to $a\geq 0$ (with the same $p_0$ and $p_1$). This in particular means that 
\begin{equation}\label{E:ShiftDouble}
\sup_{a\ge 0}\Delta_2(\V_a)\le 2^{p_1}
\end{equation}
and
\begin{equation*}
t^{p_0}\V_a(1)\le\V_a(t)\le t^{p_1}\V_a(1), \quad \hbox{ for } t\ge a\ge 1,
\end{equation*}
thanks to (\ref{t1}) for $\V_a$.
\end{remark}

\noindent
Next result is a slight generalization of \cite[Lemma 20]{DE}. 
\begin{lem}\label{lem2}
Let $\varphi$ be an $N$-function with $\Delta_{2}(\varphi,\varphi*)<\infty$; then, uniformly in $\xi_{1}, \xi_{2} \in \R^{Nn}$  with $|\xi_{1}|+|\xi_{2}|>0$, and in $\mu\geq 0$, it holds
\begin{equation*}
 \frac{\varphi'(\mu+|\xi_{1}|+|\xi_{2}|)}{\mu +|\xi_{1}|+|\xi_{2}|} \sim \int_{0}^{1} \frac{\varphi'(\mu+|\xi_{\theta}|)}{\mu+|\xi_{\theta}|} d\theta, 
\end{equation*} 
where $\xi_{\theta}= \xi_{1} + \theta (\xi_{2}-\xi_{1})$ with $\theta \in [0,1]$.
\end{lem}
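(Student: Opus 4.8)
The plan is to reduce the statement to a one-dimensional estimate along the segment joining $\xi_1$ and $\xi_2$, and then exploit the comparability $\varphi'(t)\sim t\varphi''(t)$ encoded in Assumption~\ref{assumption} (or, more precisely here, the doubling of $\varphi$ and $\varphi^*$ together with the monotonicity of $t\mapsto\varphi'(t)/t$, which follows from convexity arguments). Set $g(t):=\varphi'(\mu+t)/(\mu+t)$, which is the quantity whose behavior we must track, and write $m:=|\xi_1|$, $M:=|\xi_2|$; along the path $\theta\mapsto\xi_\theta$ the modulus $|\xi_\theta|$ ranges over an interval contained in $[0,m+M]$ but the key point is that it stays bounded below by something comparable to $\max\{m,M\}$ on a set of $\theta$'s of definite measure. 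First I would record the two one-sided bounds separately, since the $\sim$ splits into a $\lesssim$ and a $\gtrsim$.

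For the upper bound $\int_0^1 g(|\xi_\theta|)\,d\theta\lesssim g(m+M)$: the standard trick is that $t\mapsto\varphi'(t)/t$ need not be monotone in general, but $\Delta_2(\varphi,\varphi^*)<\infty$ gives $\varphi'(t)/t\sim\varphi''(t)$ in an averaged sense and, more usefully, $\varphi'(2t)\le C\varphi'(t)$. Since $|\xi_\theta|\le m+M$ for all $\theta$, one has $\mu+|\xi_\theta|\le\mu+m+M$, and using that $s\mapsto\varphi'(s)/s$ is ``almost decreasing'' up to the doubling constant (this is where one invokes that $\varphi'$ is concave-like / that $\varphi^*\in\Delta_2$ forces $\varphi'(s)/s$ to decay no faster than a fixed power of $s$, while convexity of $\varphi$ makes $\varphi'$ increasing), we get $g(|\xi_\theta|)\lesssim g(m+M)$ pointwise in $\theta$, and integrating over $[0,1]$ finishes this direction. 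I would phrase this cleanly by citing the elementary fact (used repeatedly in the literature, e.g. \cite{DE, DSSV}) that $\Delta_2(\varphi,\varphi^*)<\infty$ implies $\varphi'(s+t)/(s+t)\sim\varphi'(s)/s+\varphi'(t)/t$ type relations, from which the pointwise comparison is immediate.

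For the lower bound $\int_0^1 g(|\xi_\theta|)\,d\theta\gtrsim g(m+M)$: here the geometry matters. Without loss of generality assume $M=\max\{m,M\}$, so $m+M\le 2M$ and $g(m+M)\sim g(M)$ by the doubling of $\varphi'$. Now $|\xi_\theta|=|\xi_1+\theta(\xi_2-\xi_1)|$; I claim $|\xi_\theta|\ge \tfrac12 M$ for $\theta$ in a subinterval of $[0,1]$ of length $\ge c>0$. Indeed $|\xi_\theta|\ge \theta|\xi_2|-|\xi_1|=\theta M-m\ge\theta M-M=(\theta-1)M$... that's the wrong sign, so instead use $|\xi_\theta|\ge\theta M-(1-\theta)m$; hmm, the clean way is: either $m\le M/4$, in which case $|\xi_\theta|\ge\theta M-m\ge M/4$ for $\theta\in[1/2,1]$; or $m>M/4$, in which case near $\theta=0$ we have $|\xi_\theta|\ge m-\theta(m+M)\ge m/2\ge M/8$ for $\theta$ small. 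In either case $|\xi_\theta|\gtrsim M$ on a set $E\subset[0,1]$ with $|E|\gtrsim1$. On $E$, monotonicity of $\varphi'$ plus the ``almost decreasing'' property of $\varphi'(s)/s$ give $g(|\xi_\theta|)\sim g(M)\sim g(m+M)$, so $\int_0^1 g(|\xi_\theta|)\,d\theta\ge\int_E g\gtrsim g(m+M)$.

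\textbf{Main obstacle.} The delicate point — and the reason this is ``a slight generalization of \cite[Lemma~20]{DE}'' rather than a verbatim copy — is handling the shift parameter $\mu\ge0$ \emph{uniformly}: the comparison constants must not depend on $\mu$. This forces one to work not with $\varphi$ directly but with the shifted family $\varphi_\mu$ (Lemma~\ref{phia}), whose doubling constants are uniform in $\mu$ by Lemma~\ref{phia}, and to translate $g(t)=\varphi'(\mu+t)/(\mu+t)$ into $\sim\varphi_\mu''(t)\sim\varphi_\mu(t)/t^2$ via \eqref{t6}-type relations. Once everything is expressed through $\varphi_\mu$ and its uniform $\Delta_2$ bounds, the $\mu$-dependence disappears and the argument above runs verbatim with $\varphi$ replaced by $\varphi_\mu$; the rest is the two-sided bookkeeping sketched above. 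The secondary nuisance is making the ``$\varphi'(s)/s$ is almost decreasing'' claim precise without Assumption~\ref{assumption} (which would trivialize it via $p_1<\infty$): for the general statement one only has $\Delta_2(\varphi,\varphi^*)<\infty$, and the right substitute is the inequality $\varphi'(\lambda s)\le\lambda\,\Delta_2(\varphi)\,\varphi'(s)$ for $\lambda\ge1$, i.e. $\varphi'(s)/s$ decreases at most like $\Delta_2(\varphi)$ under doubling of the argument — enough to push the pointwise comparisons through with constants depending only on $\Delta_2(\varphi,\varphi^*)$.
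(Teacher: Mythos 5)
Your lower-bound direction is essentially sound: the Lipschitz bound $\bigl||\xi_\theta|-|\xi_1|\bigr|\le\theta|\xi_2-\xi_1|$ does give $|\xi_\theta|\gtrsim\max\{|\xi_1|,|\xi_2|\}$ on a subset of $[0,1]$ of measure bounded below, and there monotonicity of $\varphi'$ plus $\Delta_2(\varphi)$ yield $\varphi'(\mu+|\xi_\theta|)/(\mu+|\xi_\theta|)\sim\varphi'(\mu+|\xi_1|+|\xi_2|)/(\mu+|\xi_1|+|\xi_2|)$. Likewise, your idea of absorbing the shift $\mu$ into the shifted function $\varphi_\mu$ is exactly right: $\varphi_\mu'(t)/t=\varphi'(\mu+t)/(\mu+t)$ identically, and Lemma~\ref{phia} gives $\Delta_2(\varphi_\mu,(\varphi_\mu)^*)$ bounds uniform in $\mu$, so the statement is precisely \cite[Lemma 20]{DE} applied to $\varphi_\mu$ — which is all the paper itself invokes (it gives no proof).

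The genuine gap is in your upper bound. The claimed pointwise estimate $g(|\xi_\theta|)\lesssim g(|\xi_1|+|\xi_2|)$ is false: take $\varphi(t)=t^p$ with $1<p<2$, $\mu=0$, $\xi_2=-\xi_1\ne0$; then at $\theta=\tfrac12$ one has $|\xi_\theta|=0$ while $\varphi'(s)/s=ps^{p-2}\to\infty$ as $s\to0^+$, so no pointwise bound by $g(|\xi_1|+|\xi_2|)$ can hold. Your justification also points the wrong way: if $\varphi'(s)/s$ were almost decreasing, then $|\xi_\theta|\le|\xi_1|+|\xi_2|$ would give $g(|\xi_\theta|)\gtrsim g(|\xi_1|+|\xi_2|)$, not $\lesssim$; and the relation $\varphi'(s+t)/(s+t)\sim\varphi'(s)/s+\varphi'(t)/t$ you propose to cite fails in the same subquadratic example (let $s\to0$ with $t$ fixed). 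The upper bound is intrinsically an integrated statement, not a pointwise one: the standard argument splits into the case $|\xi_1-\xi_2|\le\tfrac12\max\{|\xi_1|,|\xi_2|\}$, where indeed $|\xi_\theta|\sim|\xi_1|+|\xi_2|$ for every $\theta$ and the pointwise comparison is legitimate, and the case $|\xi_1-\xi_2|>\tfrac12\max\{|\xi_1|,|\xi_2|\}$, where one uses $|\xi_\theta|\ge|\xi_1-\xi_2|\,|\theta-\theta_0|\ge c\,(|\xi_1|+|\xi_2|)\,|\theta-\theta_0|$ (with $\theta_0$ the minimizer of $\theta\mapsto|\xi_\theta|$) together with the quantitative consequence of $\varphi^*\in\Delta_2$ that there is $p_0>1$ with $\varphi'(st)/(st)\le C\,s^{p_0-2}\,\varphi'(t)/t$ for $0<s\le1$; since $p_0-2>-1$, the singularity $|\theta-\theta_0|^{p_0-2}$ is integrable in $\theta$ and the bound follows after integration, uniformly in $\mu$ once everything is phrased for $\varphi_\mu$. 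This use of $\varphi^*\in\Delta_2$ cannot be avoided (for nearly linear $\varphi$ the inequality genuinely fails), so as written your proof of the direction $\int_0^1 g(|\xi_\theta|)\,d\theta\lesssim g(|\xi_1|+|\xi_2|)$ does not stand.
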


\begin{remark}\label{rem1}
We now state the following two consequences of our structure assumptions for further reference. First, we note that the ellipticity condition $(a_2)$ and  Assumption \ref{assumption} together with Lemma \ref{lem2} and \eqref{t6} imply
\begin{equation*}
(a(\xi)-a(\xi_0))\cdot (\xi-\xi_0)\ge c \V_{1+|\xi_0|}(|\xi-\xi_0|),
\end{equation*}
for every $\xi,\xi_0\in\R^{Nn}$, where $c=c(p_0,p_1,\nu)$.\\
In a similar way, the growth condition $(a_3)$ and Assumption \ref{assumption} imply
\begin{equation*}
|a(\xi)-a(\xi_0)|\le c \V'_{1+|\xi_0|}(|\xi-\xi_0|),
\end{equation*}
for every $\xi,\xi_0\in\R^{Nn}$, where $c=c(p_0,p_1,L)$.
\end{remark}

\noindent The following results deal with the change of shift of $N$-functions $\varphi_{a}$. The first one is proved in \cite[Corollary 26]{DK}.

\begin{lem}\label{shift}
Let $\V$ be a $N$-function satisfying $\Delta_2(\V,\V^*)<+\infty$,. Then for each $\delta>0$, there exists $c_\delta(\Delta_2(\V,\V^*)) > 0$ such that for all $a,b\in\R^d$ and $t\ge 0$
$$
\V_{|a|}(t)\le c_\delta \V_{|b|}(t) + \delta \V_{|a|}(|a-b|).
$$
\end{lem}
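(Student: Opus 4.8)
\textbf{Proof plan for Lemma~\ref{shift}.}
The plan is to reduce the estimate to the known pointwise structure of shifted $N$-functions recorded in \eqref{t6}, namely $\V_c(t)\sim \V'(c+t)\,t/(c+t)$ uniformly in $c\ge0$. Writing $a'=|a|$ and $b'=|b|$, by symmetry we may assume $b'\le a'$; the only work is then to bound $\V'(a'+t)/(a'+t)$ by a constant multiple of $\V'(b'+t)/(b'+t)$ plus a $\delta$-small multiple of $\V_{a'}(|a-b|)$. Since $|a-b|\ge a'-b'\ge 0$, it suffices to control the ``gap'' between the two shifts in terms of $a'-b'$.

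First I would split into two regimes according to the size of $a'-b'$ relative to $b'+t$. \emph{Case 1: $a'-b'\le b'+t$.} Then $a'+t\le 2(b'+t)$, so by the doubling of $\V'$ (which holds since $\Delta_2(\V,\V^*)<\infty$ forces $\V'\in\Delta_2$) we get $\V'(a'+t)\lesssim \V'(b'+t)$, while trivially $1/(a'+t)\le 1/(b'+t)$; hence $\V_{a'}(t)\le c\,\V_{b'}(t)$ with $c$ depending only on $\Delta_2(\V,\V^*)$, and the second term is not needed. \emph{Case 2: $a'-b'> b'+t$.} Then $a'+t< 2(a'-b')\le 2|a-b|$, and also $a'\ge a'-b'>t$, so $t\le a'$, which puts us in the regime where $\V_{a'}$ is ``quadratic-like'': using \eqref{t6}, $\V_{a'}(t)\sim \V''(a'+t)t^2 = \V''(a'+t)t^2$. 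On the other hand $\V_{a'}(|a-b|)\gtrsim \V_{a'}(a'-b')$, and since $a'-b'> t$ one can compare $\V''(a'+t)t^2$ with $\V''(a'+(a'-b'))(a'-b')^2$ up to the uniform doubling constants — essentially because $t\mapsto \V_{a'}(t)$ is increasing and doubling on $[0,a']$ (cf.\ \eqref{phiaquadratic2}, \eqref{E:ShiftDouble}). Thus $\V_{a'}(t)\le C\,\V_{a'}(|a-b|)$, and absorbing $C$ into $\delta$ by the usual trick (prove it for $\delta\le\delta_0$ and enlarge $c_\delta$ for larger $\delta$) gives the claim. Here the term $c_\delta\V_{b'}(t)$ is available for free.

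I would then assemble the two cases: for given $\delta>0$ choose the threshold in the case distinction so that in Case~2 the constant $C$ satisfies $C\le\delta$ (possible since shrinking the threshold $a'-b'> \lambda(b'+t)$ to a larger $\lambda$ only helps), and take $c_\delta$ to be the maximum of the Case~1 constant and whatever is needed. The main obstacle is the bookkeeping in Case~2: one must carefully track that all comparison constants coming from \eqref{t6}, \eqref{phiaquadratic2} and the uniform-in-$a$ doubling of $\V_a$ (Lemma~\ref{phia}, \eqref{E:ShiftDouble}) are genuinely independent of $a$, so that the final $c_\delta$ depends only on $\Delta_2(\V,\V^*)$ as claimed — and to handle the boundary case $a'-b'$ comparable to $b'+t$ cleanly, which is why the case split should be made at a tunable ratio rather than at equality. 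Everything else is a routine application of the monotonicity and doubling properties already established for the family $\{\V_a\}_{a\ge0}$.
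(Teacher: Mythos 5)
Your overall two-regime strategy (compare the shifts when $\bigl||a|-|b|\bigr|$ is controlled by the argument of the shift, gain smallness otherwise, with a tunable threshold) can be made to work, and it is genuinely different from the paper's treatment: the paper does not prove the lemma at all but imports it from \cite[Corollary 26]{DK}, where the proof runs through Young's inequality for the shifted pair $\V_{|b|}$, $(\V_{|b|})^*$ and a change-of-shift estimate for $\V'$. However, as written your argument has a real gap at the very first step: the reduction ``by symmetry we may assume $|b|\le|a|$'' is not legitimate, because the inequality is not symmetric in $a$ and $b$ --- the shift $|a|$ appears on the left \emph{and} in the $\delta$-term, while $|b|$ appears only in the $c_\delta$-term. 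The case $|b|>|a|$ is exactly the one in which $\V_{|a|}(t)$ need not be bounded by any fixed multiple of $\V_{|b|}(t)$: for sub-quadratic $\V$ (say $\V(t)=t^p$, $1<p<2$), $|a|$ small, $|b|$ large and $t$ small, one has $\V_{|a|}(t)\sim t^p$ while $\V_{|b|}(t)\sim |b|^{p-2}t^2$, so the ratio blows up and the $\delta$-term must be used. That case therefore needs its own split (e.g.\ $|b|-|a|\le\lambda(|a|+t)$ versus $>\lambda(|a|+t)$), in which the roles of the two factors are interchanged: there you use monotonicity of $\V'$ and pay the factor $1+\lambda$ on $1/(|b|+t)$, whereas in your case you use doubling of $\V'$ and the trivial bound on the reciprocal. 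The repair is parallel but not automatic, and your proposal simply does not cover it.

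Two further points need attention. First, the mechanism that makes the Case~2 constant $\le\delta$ is not the one you describe: the doubling comparison of $\V_{|a|}$ on $[0,|a|]$ only produces a fixed constant $C$, and ``prove it for $\delta\le\delta_0$ and enlarge $c_\delta$ for larger $\delta$'' goes the wrong way, since small $\delta$ is the hard case. What actually works, once the threshold is $|a|-|b|>\lambda(|b|+t)$, is that then $t\le |a-b|/\lambda$, so by convexity and $\V_{|a|}(0)=0$ one has $\V_{|a|}(t)\le\lambda^{-1}\V_{|a|}(|a-b|)$, and choosing $\lambda\gtrsim 1/\delta$ gives the $\delta$-term directly; with this observation Case~2 needs neither \eqref{t6} nor any doubling. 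Second, you invoke \eqref{t6} and \eqref{phiaquadratic2}, which in the paper are derived under Assumption~\ref{assumption} and involve $\V''$, while the lemma assumes only $\Delta_2(\V,\V^*)<\infty$; you should instead use $\V_c(t)\sim t\,\V_c'(t)=\V'(c+t)\,t^2/(c+t)$, uniformly in $c\ge0$, which follows from Lemma~\ref{phia}. With these repairs (and noting that in Case~1 the constant from iterating the doubling of $\V'$ up to the factor $1+\lambda$ is allowed to depend on $\delta$, since it is absorbed into $c_\delta$), your elementary argument does prove the lemma with $c_\delta$ depending only on $\delta$ and $\Delta_2(\V,\V^*)$.
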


\begin{lem}\label{L:ShiftComp}
Let $\V$ be a $N$-function satisfying Assumption \ref{assumption}; let $M\ge 1$ and $1\le a,b\le M$ be given. Then
\begin{equation*}
    \V_a(t)\le 4^{p_1+1}M^{p_1+2}\V_b(t),
    \quad\text{ for all }t\ge 0.
\end{equation*}
\end{lem}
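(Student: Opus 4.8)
The plan is to reduce the two-sided shift comparison to a chain of elementary estimates built from the already-established uniform doubling of the shifted functions and from the explicit comparison~\eqref{t6}. Since $\V_a(t)\sim\V''(a+t)t^2$ uniformly in $a\ge0$, it suffices to compare $\V''(a+t)$ with $\V''(b+t)$ for $1\le a,b\le M$, up to a constant depending only on $p_0,p_1$. I would split into the regimes $t\le M$ and $t\ge M$.

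First, for $t\ge M$ we have $a+t\sim t\sim b+t$ (since $M\le a+t\le 2t$ and likewise for $b$), so monotonicity of $\V''$ together with~\eqref{t2} (applied to $\V'$, recalling $\V'(s)\sim s\V''(s)$) gives $\V''(a+t)\sim\V''(b+t)$ with constant depending only on $p_0,p_1$; the factor $M^{p_1+2}$ is not even needed here. The work is in the range $t\le M$. There, writing $s=a+t$ and $s'=b+t$, both lie in $[1,2M]$, and I would use $\V''(s)\sim\V'(s)/s$ from~\eqref{t3}, then bound $\V'(s)/\V'(s')$ using~\eqref{t2}: since $s/s'\le 2M$ and $s'/s\le 2M$, we get $\V'(s)\le(2M)^{p_1-1}\V'(s')$ and $s/s'\le M$ gives the $s$-ratio factor, so altogether $\V''(a+t)\le C M^{p_1}\V''(b+t)$ for a constant $C=C(p_0,p_1)$. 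Feeding this back through $\V_a(t)\sim\V''(a+t)t^2$ (with uniform constants, hence absorbed into $C$) yields $\V_a(t)\le C M^{p_1}\V_b(t)$ on $t\le M$.

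Combining the two regimes gives $\V_a(t)\le C(p_0,p_1)M^{p_1}\V_b(t)$ for all $t\ge0$, which is stronger than the stated bound; to land exactly on $4^{p_1+1}M^{p_1+2}$ one only needs to track the constants in~\eqref{t6} and~\eqref{t2} carefully and bound $C(p_0,p_1)$ crudely, absorbing it into the extra powers of $M$ (using $M\ge1$). Alternatively, and perhaps cleaner for matching the stated constant, I would avoid~\eqref{t6} and argue directly from the definition $\V_a(t)=\int_0^t\V'(a+r)\frac{r}{a+r}\,dr$: estimate $\V'(a+r)\le\V'(b+r)\cdot\big(\tfrac{a+r}{b+r}\big)^{p_1-1}$ when $a+r\ge b+r$ via~\eqref{t2}, note $\tfrac{a+r}{b+r}\le a\le M$, and estimate the weight $\tfrac{r}{a+r}\le\tfrac{b+r}{a+r}\cdot\tfrac{r}{b+r}\le M\cdot\tfrac{r}{b+r}$; this produces $\V_a(t)\le M^{p_1}\V_b(t)$ directly, after which the remaining powers and the $4^{p_1+1}$ are generous slack.

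The main obstacle is purely bookkeeping: making sure that every "$\sim$" invoked from~\eqref{t6} and~\eqref{t3} carries a constant depending only on $p_0,p_1$ (which Remark~\ref{phiap0p1} and~\eqref{phiaquadratic} guarantee, since $\V_a$ satisfies Assumption~\ref{assumption} uniformly), and then verifying that the crude bound $4^{p_1+1}M^{p_1+2}$ indeed dominates the product of these constants times $M^{p_1}$ for all $M\ge1$. There is no analytic difficulty; the one point requiring a little care is the weight ratio $\frac{r}{a+r}$ versus $\frac{r}{b+r}$, where one must use $a\le M$ (not just $a,b\le M$) to control $\frac{b+r}{a+r}\le 1+\frac{b}{a+r}\le 1+b\le M+1\le 2M$, contributing one of the extra powers of $M$.
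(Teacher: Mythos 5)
Your second, ``direct from the definition'' argument is correct and is the one you should keep: writing $\V_a'(r)=\V'(a+r)\tfrac{r}{a+r}$, bounding $\V'(a+r)\le\big(\tfrac{a+r}{b+r}\big)^{p_1-1}\V'(b+r)\le M^{p_1-1}\V'(b+r)$ by \eqref{t2} (and by monotonicity when $a<b$), and bounding the weight ratio $\tfrac{b+r}{a+r}\le b\le M$, gives $\V_a'(r)\le M^{p_1}\V_b'(r)$ pointwise and hence the lemma with room to spare. This has the same skeleton as the paper's proof (a pointwise comparison of $\V_a'$ and $\V_b'$ followed by integration), but the paper instead converts the $\V'$-ratio into a $\V$-ratio via $\tfrac t2\V'(t/2)\le\V(t)\le t\V'(t)$ and then splits into the regimes $s\le M$ and $s>M$, applying \eqref{t1} in each; your use of \eqref{t2} avoids that case distinction and in fact yields the sharper factor $M^{p_1}$ (indeed $\max\{M^{p_1-1},M\}$ if you split on $a\ge b$ versus $a<b$), at the price of nothing. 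One caution about your first route through \eqref{t6}: the plan to ``absorb $C(p_0,p_1)$ into the extra powers of $M$ using $M\ge1$'' does not work, since at $M=1$ those extra powers give no slack, and the equivalence constants in \eqref{t3} and \eqref{t6} involve quantities like $\tfrac{p_1-1}{p_0-1}$ that need not be dominated by $4^{p_1+1}$; so that version proves the statement only up to an unspecified constant, not with the constant as stated. Since your alternative argument settles this cleanly, the proposal as a whole is fine provided you promote the second argument to be the actual proof.
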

\begin{proof}
From the definition and the fact that $t/2\V'(t/2)\le\V(t)\le t\V'(t)$,
\begin{equation*}
    \V'_a(s)=\V'_b(s)\left[\frac{\V'(a+s)}{\V'(b+s)}\left(\frac{s+b}{s+a}\right)\right]
    \le
    \V'_b(s)\left[\frac{\V(2(a+s))}{\V(b+s)}
    \left(\frac{s+b}{s+a}\right)^2\right].
\end{equation*}
If $s\le M$, then, using \eqref{t1}, we have
\begin{equation*}
    \frac{\V(2(a+s))}{\V(b+s)}\left(\frac{s+b}{s+a}\right)^2
    \le\frac{\V(4M)}{\V(1)}M^2
    \le 4^{p_1}M^{p_1+2}.
\end{equation*}
Otherwise, $1\le a,b\le M< s$ and
\begin{equation*}
    \frac{\V(2(a+s))}{\V(b+s)}\left(\frac{s+b}{s+a}\right)^2
    \le
    4\frac{\V(4s)}{\V(s)}\le4^{p_1+1}.
\end{equation*}
Thus
\begin{equation*}
    \V_a(t)=\int_0^t\V_a'(s)\, d s
    \le\left(4^{p_1}M^{p_1+2}+4^{p_1+1}\right)
        \int_0^t\V_b'(s)\, d s
    \le
    4^{p_1+1}M^{p_1+2}\V_b(t).
\end{equation*}
\end{proof}

\medskip

\noindent
We will use the function $V:\R^{Nn}\rightarrow \R^{Nn}$ defined by 
$$
V(\xi) = \sqrt{\frac{\V'(1+|\xi|)}{1+|\xi|}}\xi.
$$
The monotonicity property of $\V$ ensures that 
\begin{align}\label{equiVphi}
|V(\xi_{1})-V(\xi_{2})|^{2}\sim \V_{1+|\xi_{1}|}(|\xi_{1}-\xi_{2}|) \quad \mbox{ for any } \xi_{1}, \xi_{2} \in \R^{Nn};
\end{align}
 see \cite{DE} for further properties about the $V$-function.

\medskip

\noindent
Let $\V$ be an $N$-function that satisfies the $\Delta_{2}$-condition. The set of functions $L^{\V}(\Omega, \R^{N})$ is defined by
\begin{align*}
L^{\V}(\Omega, \R^{N})= \left\{ u: \Omega \ri \R^{N} \mbox{ measurable } : \, \int_{\Omega} \V(|u|)\, dx <\infty\right\}. 
\end{align*}
The Luxembourg norm is defined as follows:
\begin{align*}
\|u\|_{L^\varphi(\Omega, \R^{N})}=\inf \left\{\lambda>0 : \int_{\Omega} \varphi \left(\frac{|u(x)|}{\lambda} \right)\,dx\leq 1\right\}.
\end{align*}
With this norm $L^\varphi(\Omega, \R^{N})$ is a Banach space. 

\noindent
By $W^{1, \V}(\Omega, \R^{N})$ we denote the classical Orlicz-Sobolev space, that is $u\in W^{1, \V}(\Omega, \R^{N})$ whenever $u,  Du \in L^{\V}(\Omega, \R^{N})$.
Furthermore, by $W^{1,\V}_{0}(\Omega, \R^{N})$ we mean the closure of $C^{\infty}_{c}(\Omega, \R^{N})$ functions with respect to the norm
\begin{align*}
\|u\|_{W^{1, \V}(\Omega, \R^{N})}=\|u\|_{L^{\V}(\Omega, \R^{N})}+\| Du\|_{L^{\V}(\Omega, \R^{N})}. 
\end{align*}

\noindent For a function $u\in L^\varphi(\Q_\rho(z_0), \R^{N})$, using the decomposition
$$
\Q_\rho^{\le}(z_0)=\{z\in\Q_\rho(z_0) : |u(z)|\le a\}\quad  \hbox{ and } \quad \Q_\rho^{>}(z_0)=\{z\in\Q_\rho(z_0) : |u(z)|> a\},
$$
as well as \eqref{phiaquadratic} and Remark \ref{phiap0p1}, we easily get the following lemma.

\begin{lem}\label{L:OrliczPower}
Let $\V$ be a $N$-function satisfying Assumption \ref{assumption} and let $u\in L^{\V}(\Q_\rho(z_0),\R^N)$, $a\ge 1$. Then
\begin{compactenum}[(a)]
    \item 
\begin{equation*}
    \-int_{\Q_\rho(z_0)}\V_a(|u|)\, dz
    \le
    C\V''(a)\-int_{\Q_\rho(z_0)}|u|^2\, dz+\V_a(1)\-int_{\Q_\rho(z_0)}|u|^{p_1}\, dz;
\end{equation*}
    \item for each $0\le s\le p_0<2$,
\begin{equation*}
    \-int_{\Q_\rho(z_0)}|u|^s\, dz
    \le
    \left(\frac{C}{\V''(a)}\-int_{\Q_\rho(z_0)}\V_a(|u|)\, dz\right)^\frac{s}{2}
    +\frac{1}{\V_a(1)}\-int_{\Q_\rho(z_0)}\V_a(|u|)\, dz.
\end{equation*}
\end{compactenum}
Here $C$ depends on only $p_0,p_1$.
\end{lem}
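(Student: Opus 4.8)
\textbf{Proof plan for Lemma~\ref{L:OrliczPower}.}
The strategy is to split the cylinder $\Q_\rho(z_0)$ according to whether $|u(z)|\le a$ or $|u(z)|>a$ and to estimate $\V_a(|u|)$ separately on each piece using the two regimes of $\V_a$ described above: the quadratic behaviour \eqref{phiaquadratic} when $0\le t\le a$, and the $\V$-type / power behaviour when $t\ge a$. For part (a), on the set $\Q_\rho^{\le}(z_0)$ where $|u|\le a$, estimate \eqref{phiaquadratic} gives $\V_a(|u|)\le C\V''(a)|u|^2$; on $\Q_\rho^{>}(z_0)$ where $|u|\ge a\ge 1$, the upper bound in Remark~\ref{phiap0p1} (namely $\V_a(t)\le t^{p_1}\V_a(1)$ for $t\ge a\ge 1$) gives $\V_a(|u|)\le \V_a(1)|u|^{p_1}$. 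Adding the two contributions and enlarging each integral back to all of $\Q_\rho(z_0)$ yields the claimed inequality, with $C$ depending only on $p_0,p_1$ through \eqref{phiaquadratic}.

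For part (b), fix $0\le s\le p_0<2$ and again split. On $\Q_\rho^{\le}(z_0)$, from \eqref{phiaquadratic} we have $|u|^2\le \frac{C}{\V''(a)}\V_a(|u|)$, hence $|u|^s\le\left(\frac{C}{\V''(a)}\V_a(|u|)\right)^{s/2}$; integrating over this set and applying Jensen's inequality (valid since $s/2\le 1$ and the cylinder has finite measure) bounds the average of $|u|^s$ there by $\left(\frac{C}{\V''(a)}\-int_{\Q_\rho(z_0)}\V_a(|u|)\,dz\right)^{s/2}$. On $\Q_\rho^{>}(z_0)$, since $|u|\ge a\ge 1$ and $s\le p_0$, monotonicity together with the lower bound $\V_a(t)\ge t^{p_0}\V_a(1)\ge t^{s}\V_a(1)$ for $t\ge a\ge 1$ (again Remark~\ref{phiap0p1}, using $t^{p_0}\ge t^s$ when $t\ge 1$) gives $|u|^s\le\frac{1}{\V_a(1)}\V_a(|u|)$; integrating over this set and enlarging to $\Q_\rho(z_0)$ gives the second term. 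Summing the two estimates produces the stated inequality.

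The only mildly delicate point is the use of Jensen's inequality in part~(b) to pull the exponent $s/2\le 1$ outside the integral over the sublevel set: one applies concavity of $\tau\mapsto\tau^{s/2}$ to the average of $\frac{C}{\V''(a)}\V_a(|u|)\mathbf{1}_{\Q_\rho^{\le}}$, and then enlarges the domain of integration from $\Q_\rho^{\le}(z_0)$ to $\Q_\rho(z_0)$ (legitimate since the integrand is nonnegative and $\tau\mapsto\tau^{s/2}$ is nondecreasing). Everything else is a direct application of the pointwise bounds \eqref{phiaquadratic} and the power estimates in Remark~\ref{phiap0p1}, so I do not anticipate any genuine obstacle; the statement is essentially bookkeeping on the two regimes of the shifted $N$-function $\V_a$.
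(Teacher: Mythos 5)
Your proposal is correct and follows exactly the route the paper indicates (the paper only sketches it): split $\Q_\rho(z_0)$ into $\Q_\rho^{\le}(z_0)$ and $\Q_\rho^{>}(z_0)$, use the quadratic bound \eqref{phiaquadratic} on the sublevel set and the power bounds of Remark~\ref{phiap0p1} on the superlevel set, with Jensen's inequality handling the exponent $s/2\le 1$ in part (b). No gaps.
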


\medskip

\subsection{Affine functions}

\noindent
Let $z_{0}\in \R^{n+1}$ and $\rho>0$. Given $u\in L^{2}(\Q_{\rho}(z_{0}), \R^{N})$, we denote by $\ell_{z_{0}, \rho}:\R^{n}\ri \R^{N}$ the unique affine function minimizing the functional
\begin{align*}
\ell(x)\mapsto \-int_{\Q_{\rho}(z_{0})} |u(x,t)-\ell(x)|^2\, dz
\end{align*}
amongst all affine functions $\ell: \R^{n}\ri \R^{N}$. It is well known (see \cite{BDM}) that 
\begin{equation}\label{E:AffineMin}
\ell_{z_{0}, \rho}(x)= (u)_{z_{0}, \rho} + P_{z_{0}, \rho}(x-x_{0}), 
\end{equation}
where 
\begin{align}\label{gradaffine}
P_{z_{0}, \rho}= \frac{n+2}{\rho^{2}} \-int_{\Q_{\rho}(z_{0})} u(x,t)\otimes (x-x_{0})\, dz.  
\end{align}
\noindent The following lemma ensures that $\ell_{z_0,\rho}$ is an almost minimizer of the functional $\displaystyle{\ell\mapsto \-int_{\Q_{\rho}(z_{0})} \V\left(\frac{|u-\ell|}{r}\right)\, dz}$ amongst the affine functions $\ell: \R^{n}\ri \R^{N}$.

\begin{lem}\label{quasimin}
Let $\V$ be an $N$-function satisfying the $\Delta_2$-property and let $u\in L^{\V}(\Q_{\rho}(z_{0}), \R^{N})$. Let $r>0$, then there exists a constant $\kappa_0= \kappa_0(n,\Delta_2(\V))>0$ such that
$$
 \-int_{\Q_{\rho}(z_{0})} \V\left(\frac{|u-\ell_{z_0,\rho}|}{r}\right)\, dz\le \kappa_0 \-int_{\Q_{\rho}(z_{0})} \V\left(\frac{|u-\ell|}{r}\right)\, dz,
 $$
 for every  affine function $\ell: \R^{n}\ri \R^{N}$.
\end{lem}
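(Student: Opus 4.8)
The statement is the $N$-function analogue of the well-known fact that, for the $L^2$ theory, the best affine approximation $\ell_{z_0,\rho}$ is comparable to any competitor. The plan is to reduce everything to the $L^2$ minimality property~\eqref{E:AffineMin}--\eqref{gradaffine} of $\ell_{z_0,\rho}$ and then trade $L^2$-control for $L^\V$-control using the $\Delta_2$-property. Fix an arbitrary affine function $\ell:\R^n\to\R^N$. First I would write $\ell_{z_0,\rho}-\ell$, which is again affine, in the form $\ell_{z_0,\rho}(x)-\ell(x)=c+Q(x-x_0)$ for a vector $c\in\R^N$ and a matrix $Q\in\R^{Nn}$, and observe that because $\ell_{z_0,\rho}$ minimizes $\int_{\Q_\rho(z_0)}|u-\ell|^2\,dz$ we have the two orthogonality/comparison relations
\begin{equation*}
|c|=\Big|\-int_{\Q_\rho(z_0)}(u-\ell)\,dz\Big|,
\qquad
|Q|\le \frac{n+2}{\rho^2}\Big|\-int_{\Q_\rho(z_0)}(u-\ell)\otimes(x-x_0)\,dz\Big|+(\text{terms from }\ell),
\end{equation*}
so that in any case one gets the pointwise bound, valid for $x\in\B_\rho(x_0)$,
\begin{equation*}
|\ell_{z_0,\rho}(x)-\ell(x)|\le \frac{C(n)}{|\Q_\rho(z_0)|}\int_{\Q_\rho(z_0)}|u(y,t)-\ell(y)|\,dy\,dt,
\end{equation*}
since $|x-x_0|\le\rho$ on the ball. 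In other words, the affine error $\ell_{z_0,\rho}-\ell$ is, uniformly on $\Q_\rho(z_0)$, dominated by the average of $|u-\ell|$.

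From here the argument is soft. Dividing by $r$, applying $\V$ (which is convex and nondecreasing), using Jensen's inequality to pull $\V$ inside the average, and then integrating over $\Q_\rho(z_0)$, I get
\begin{equation*}
\-int_{\Q_\rho(z_0)}\V\!\left(\frac{|\ell_{z_0,\rho}-\ell|}{r}\right)dz
\le
\V\!\left(\frac{C(n)}{r}\-int_{\Q_\rho(z_0)}|u-\ell|\,dz\right)
\le
\-int_{\Q_\rho(z_0)}\V\!\left(\frac{C(n)|u-\ell|}{r}\right)dz.
\end{equation*}
The constant $C(n)$ inside $\V$ is then absorbed by iterating the $\Delta_2$-condition~\eqref{E:dbling} a fixed (depending on $n$) number of times, giving
$\-int_{\Q_\rho(z_0)}\V(C(n)|u-\ell|/r)\,dz\le C'(n,\Delta_2(\V))\-int_{\Q_\rho(z_0)}\V(|u-\ell|/r)\,dz$. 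Finally, a triangle inequality $|u-\ell_{z_0,\rho}|\le|u-\ell|+|\ell_{z_0,\rho}-\ell|$ together with convexity, $\V(a+b)\le\tfrac12\V(2a)+\tfrac12\V(2b)$, and one more application of $\Delta_2$ combines the two contributions into the claimed bound with $\kappa_0=\kappa_0(n,\Delta_2(\V))$.

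The only mild subtlety — and the step I would be most careful about — is the derivation of the uniform pointwise bound $\sup_{\B_\rho(x_0)}|\ell_{z_0,\rho}-\ell|\lesssim\-int_{\Q_\rho}|u-\ell|$: one must check that both the "constant part" and the "gradient part" of the affine function $\ell_{z_0,\rho}-\ell$ are controlled by the $L^1$-average of $u-\ell$ (not just $u$), which follows by applying the explicit formulas~\eqref{E:AffineMin} and~\eqref{gradaffine} to the function $u-\ell$ in place of $u$ — legitimate since the affine minimizer of $v\mapsto\int|v-\text{affine}|^2$ is translation-covariant under subtracting an affine function, i.e. $\ell_{z_0,\rho}[u-\ell]=\ell_{z_0,\rho}[u]-\ell$. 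Once that covariance is noted, the rest is just Jensen plus $\Delta_2$ bookkeeping, and there is no real obstacle.
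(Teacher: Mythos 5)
Your proposal is correct and follows essentially the same route as the paper: a pointwise bound $\sup_{\B_\rho(x_0)}|\ell_{z_0,\rho}-\ell|\le (n+3)\-int_{\Q_\rho(z_0)}|u-\ell|\,dz$ obtained from the explicit formulas \eqref{E:AffineMin}--\eqref{gradaffine} applied to $u-\ell$ (the paper does this by inserting $-\zeta-Ax$ into the averages, which is exactly your translation-covariance observation), followed by the triangle inequality, Jensen's inequality, and the $\Delta_2$-condition to absorb the dimensional constant. No gap.
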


\begin{proof}
Assume $z_{0}=(0,0)$ and denote $\ell_{z_0,\rho}$, $\Q_\rho(z_0)$, and $(u)_{z_0,\rho}$ by $\ell_{\rho}$, $\Q_\rho$, and $(u)_{\rho}$, respectively. Let us consider a generic affine function $\ell(x)=\zeta+Ax$, then, for  $x\in \B_\rho$, 
$$
|\ell-\ell_\rho|=|(u)_\rho-\zeta+ (D\ell_\rho-A)x|\le |(u)_\rho-\zeta|+\rho|D\ell_\rho-A|.
$$
Now we have 
$$
|(u)_\rho-\zeta|=\left|\-int_{\Q_\rho}(u-\zeta)\,dz\right|=\left|\-int_{\Q_\rho}(u-\zeta-Ax)\,dz\right|\le \-int_{\Q_\rho}|u-\ell|\,dz,
$$
and, using \eqref{gradaffine},
\begin{equation}\label{touse}
|D\ell_\rho-A|=\frac{n+2}{\rho^2}\left|\-int_{\Q_\rho}(u-Ax)\otimes x\,dz\right|=\frac{n+2}{\rho^2}\left|\-int_{\Q_\rho}(u-\zeta-Ax)\otimes x\,dz\right|\le \frac{n+2}{\rho}\-int_{\Q_\rho}|u-\ell|\,dz.
\end{equation}
In conclusion 
\begin{equation}\label{l-lrho}
|\ell-\ell_\rho|\le (n+3)\-int_{\Q_\rho}|u-\ell|\,dz.
\end{equation}
Recalling that, by the convexity and the $\Delta_2$-condition, $\V(s+t)\sim \V(s)+ \V(t)$ for any $s, t\geq 0$, we have 
\begin{align*}
\-int_{\Q_{\rho}} \V\left(\frac{|u-\ell_{\rho}|}{r}\right)\, dz \le \frac{\Delta_2(\V)}{2}\-int_{\Q_{\rho}} \V\left(\frac{|u-\ell|}{r}\right)\, dz + \frac{\Delta_2(\V)}{2}\-int_{\Q_{\rho}}\V\left(\frac{|\ell-\ell_{\rho}|}{r}\right)\, dz. 
\end{align*}
Hence, using \eqref{l-lrho}, the fact that $\V$ is increasing together with Jensen's inequality, we can infer that
\begin{align*}
\-int_{\Q_{r}} \V \left( \frac{|\ell- \ell_{\rho}|}{r}\right) \, dz \lesssim \-int_{\Q_{\rho}} \V \left( \frac{|u-\ell|}{r}\right)\, dz. 
\end{align*}
\end{proof}
\noindent An analogous reasoning leads to another basic inequality.
\begin{remark}
For an $N$-function $\V$ satisfying the $\Delta_2$-condition, we have
\begin{align*}
\-int_{\Q_{\rho}(z_{0})} \V \left( \frac{|u-(u)_{z_{0}, \rho}|}{r}\right)\,dz \leq \Delta_2(\V) \,\-int_{\Q_{\rho}(z_{0})} \V\left(\frac{|u-u_{0}|}{r}\right)\, dz,
\end{align*}
for any $u_0\in\R^N$ and for any $r>0$.
\end{remark}

\noindent Finally, we can show that $\ell_{z_0,\rho}$ is an almost minimizer of the functional $\displaystyle{\ell\mapsto \-int_{\Q_{\rho}(z_{0})} \V_{1+|D\ell|}\left(\frac{|u-\ell|}{\rho}\right)\, dz}$ amongst the affine functions $\ell: \R^{n}\ri \R^{N}$.

\begin{lem}\label{quasimin2}
Let $\V$ be an $N$-function satisfying $\Delta_2(\V,\V^*)<+\infty$, and let $u\in L^{\V}(\Q_{\rho}(z_{0}), \R^{N})$. There exists a constant $\kappa_1= \kappa_1(n,\Delta_2(\V,\V^*))>0$ such that
$$
 \-int_{\Q_{\rho}(z_{0})} \V_{1+|D\ell_{z_0,\rho}|}\left(\frac{|u-\ell_{z_0,\rho}|}{\rho}\right)\, dz\le \kappa_1  \-int_{\Q_{\rho}(z_{0})} \V_{1+|D\ell|}\left(\frac{|u-\ell|}{\rho}\right)\, dz,
 $$
 for every  affine function $\ell: \R^{n}\ri \R^{N}$.
\end{lem}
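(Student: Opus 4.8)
\textbf{Proof proposal for Lemma~\ref{quasimin2}.}

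The plan is to reduce the shifted statement to the unshifted almost-minimality of $\ell_{z_0,\rho}$ from Lemma~\ref{quasimin}, paying for the reduction with the shift-change lemmas (Lemma~\ref{shift} and Lemma~\ref{L:ShiftComp}) and the two-sided relation $\V_a(t)\sim\V''(a+t)t^2\sim\frac{\V(a+t)}{(a+t)^2}t^2$ from \eqref{t6}. As in the proof of Lemma~\ref{quasimin}, I would assume $z_0=(0,0)$, write $\ell_\rho$, $\Q_\rho$, $(u)_\rho$ for $\ell_{z_0,\rho}$, $\Q_\rho(z_0)$, $(u)_{z_0,\rho}$, and fix a generic affine $\ell(x)=\zeta+Ax$. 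The key geometric fact, already recorded in \eqref{l-lrho} and in the intermediate estimate \eqref{touse}, is that both $|(u)_\rho-\zeta|$ and $\rho|D\ell_\rho-A|$ are controlled by $\-int_{\Q_\rho}|u-\ell|\,dz$; in particular $|D\ell_\rho-A|\le\frac{n+2}{\rho}\-int_{\Q_\rho}|u-\ell|\,dz$ and $\frac{|\ell-\ell_\rho|}{\rho}\le(n+3)\-int_{\Q_\rho}\frac{|u-\ell|}{\rho}\,dz$.

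The first step is to split off the shift: set $a=1+|D\ell_\rho|$ and $b=1+|A|$. Using Lemma~\ref{shift} with a parameter $\delta>0$ to be fixed,
\begin{equation*}
\-int_{\Q_\rho}\V_{a}\!\left(\frac{|u-\ell_\rho|}{\rho}\right)dz
\le c_\delta\-int_{\Q_\rho}\V_{b}\!\left(\frac{|u-\ell_\rho|}{\rho}\right)dz
+\delta\-int_{\Q_\rho}\V_{a}\!\left(\frac{|a-b|}{1}\right)dz .
\end{equation*}
Here $|a-b|=\big||D\ell_\rho|-|A|\big|\le|D\ell_\rho-A|$, so by the bound on $|D\ell_\rho-A|$, Jensen (for the convex $\V_a$), and the uniform $\Delta_2$-property of the shifts (Lemma~\ref{phia}, or \eqref{E:ShiftDouble}), the last term is bounded by $\delta\,C\-int_{\Q_\rho}\V_{a}\!\left(\frac{|u-\ell|}{\rho}\right)dz$; choosing $\delta$ small (depending on $n,\Delta_2(\V,\V^*)$) this term is absorbed, possibly after first using Lemma~\ref{shift} once more to convert $\V_a$ back to $\V_b$ on the right, which again costs only a term controlled by the same $\delta$-small quantity. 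For the main term $\-int_{\Q_\rho}\V_{b}\!\left(\frac{|u-\ell_\rho|}{\rho}\right)dz$ I would apply $\V_b(s+t)\lesssim\V_b(s)+\V_b(t)$ (uniform $\Delta_2$), the bound on $\frac{|\ell-\ell_\rho|}{\rho}$, and Jensen, to reach $C\-int_{\Q_\rho}\V_{b}\!\left(\frac{|u-\ell|}{\rho}\right)dz$, which is exactly the right-hand side of the claimed inequality (with shift $b=1+|D\ell|$).

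The remaining subtlety is that in the absorption step the right-hand side carries the shift $\V_a$ rather than $\V_b$; this is where Lemma~\ref{shift} applied in the reverse direction (with $a,b$ swapped) closes the loop, since $\V_a(\cdot)\le c_\delta\V_b(\cdot)+\delta\V_b(|a-b|)$ and $|a-b|\le|D\ell_\rho-A|$ feeds back into the same absorbable term. I expect the main obstacle to be bookkeeping the interplay between the two uses of Lemma~\ref{shift} and the absorption — one must be careful that the $\delta$-small constants genuinely multiply a quantity already present on the right, namely $\-int_{\Q_\rho}\V_{1+|D\ell|}\!\left(\frac{|u-\ell|}{\rho}\right)dz$, so that a single smallness choice suffices; the $\Delta_2$ estimates themselves are routine given Lemma~\ref{phia} and \eqref{t6}. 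Alternatively, if one only needs the qualitative constant, one can bypass Lemma~\ref{shift} entirely when $|D\ell_\rho|,|A|$ are comparable by invoking Lemma~\ref{L:ShiftComp} directly, and treat the case of very different gradients by noting that then $\-int_{\Q_\rho}|u-\ell|\,dz$ is already large, so the desired inequality is trivial; but the cleaner route is the $\delta$-absorption above.
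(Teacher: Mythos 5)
Your argument is correct and uses exactly the paper's ingredients (Lemma~\ref{shift}, the bound \eqref{touse}/\eqref{l-lrho} on $|D\ell_{z_0,\rho}-D\ell|$ and $|\ell-\ell_{z_0,\rho}|$, Jensen, and the uniform $\Delta_2$-property of the shifted functions from Lemma~\ref{phia}), just in a different order: you change the shift first and then redo the triangle-inequality argument, whereas the paper first applies Lemma~\ref{quasimin} to the $N$-function $\V_{1+|D\ell_{z_0,\rho}|}$ itself (legitimate because $\ell_{z_0,\rho}$ is the $L^2$-minimizer and Lemma~\ref{quasimin} holds for any $\Delta_2$ $N$-function, uniformly in the shift) and only then performs a single change of shift. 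The paper's ordering shows that your emphasis on ``choosing $\delta$ small and absorbing'' is a red herring: the error term produced by Lemma~\ref{shift} is $\delta\V_{1+|D\ell|}(|D\ell-D\ell_{z_0,\rho}|)$ (after using $\V_{1+|a|}(|a-b|)\sim\V_{1+|b|}(|a-b|)$), and by \eqref{touse}, the $\Delta_2$-condition and Jensen's inequality it is bounded directly by the right-hand side $\-int_{\Q_\rho(z_0)}\V_{1+|D\ell|}\left(\frac{|u-\ell|}{\rho}\right)dz$, so no smallness of $\delta$ and no absorption are needed. Note that absorption into the left-hand side, as your phrasing suggests at one point, would not literally work (the term $\delta C\-int\V_{1+|D\ell_{z_0,\rho}|}(|u-\ell|/\rho)\,dz$ involves $\ell$, not $\ell_{z_0,\rho}$), and the self-absorption variant $Z\le c_{\delta'}Y+\delta' CZ$ that your detour implicitly uses additionally requires $Z<\infty$, which does hold by \eqref{t6} since $u\in L^{\V}(\Q_\rho(z_0))$ but is an unnecessary complication; the clean closing is exactly what you write at the end, namely passing the error to the shift $1+|D\ell|$ and applying Jensen, which is the paper's step.
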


\begin{proof} 
From Lemma \ref{phia}, Lemma \ref{quasimin}, and Lemma \ref{shift}  we obtain 
\[
\begin{split}
 \-int_{\Q_{\rho}(z_{0})} \V_{1+|D\ell_{z_0,\rho}|}\left(\frac{|u-\ell_{z_0,\rho}|}{\rho}\right)\, dz&\le \kappa_0\,  \-int_{\Q_{\rho}(z_{0})} \V_{1+|D\ell_{z_0,\rho}|}\left(\frac{|u-\ell|}{\rho}\right)\, dz\\
  & \le c_\delta \-int_{\Q_{\rho}(z_{0})} \V_{1+|D\ell|}\left(\frac{|u-\ell|}{\rho}\right)\, dz +\delta\V_{1+|D\ell|}(|D\ell-D\ell_{z_0,\rho}|),\\
\end{split}
\]
using also the fact that $\V_{1+|a|}(|a-b|)\sim\V_{1+|b|}(|a-b|)$. Moreover, from \eqref{touse} we infer
\[
|D\ell_{z_0,\rho}-D\ell|\le (n+2)\-int_{\Q_\rho(z_0)}\frac{|u-\ell|}{\rho}\,dz.
\]
Inserting this above, applying the $\Delta_2$-condition, and  Jensen's inequality concludes the proof.
\end{proof}

\noindent In the same way you obtain the following fact.
\begin{remark}\label{mediamin2}
For an $N$-function $\V$ satisfying $\Delta_2(\V,\V^*)<\infty$, we have
\begin{align*}
\-int_{\Q_{\rho}(z_{0})} \V_{1+|(Du)_{z_0,\rho}|} \left( |Du-(Du)_{z_0,\rho}|\right)\,dz\leq \kappa_2 \,\-int_{\Q_{\rho}(z_{0})} \V_{1+|A|}\left(|Du-A|\right)\, dz,
\end{align*}
for any $A\in\R^{Nn}$, where $\kappa_2= \kappa_2(n,\Delta_2(\V,\V^*))>0$.
\end{remark}

\noindent
We conclude the section with an excess-decay-estimate for weak solutions to linear parabolic systems with constant coefficients \cite[Lemma 5.1]{Ca}.  This can be achieved along the lines of the classical proof with very minor changes, so we will consider only the main points of the proof referring for the rest to \cite{Ca}.

\begin{lem}[$\A$-Caloric $\psi$-Excess Estimate]\label{L:CaloricDecay}
Suppose that $h\in L^1(t_0-R^2,t_0;W^{1,1}(\B_R(x_0);\R^{N}))$ is $\A$-caloric, and let $\psi:[0,\infty)\to[0,\infty)$ be an increasing function. Then $h\in C^\infty(\Q_R(z_0);\R^{N})$ and the following excess estimate holds: for each $0<\rho<R$ and $0<\theta<1/4$, we have
\[
    \-int_{\Q_{\theta\rho}(z_0)}\psi\left(\left|
        \frac{\gamma(h-\ell^{(h)}_{z_0,\theta\rho})}{\theta\rho}\right| \right)dz
    \le
    \psi\left(C\theta\-int_{Q_\rho(z_0)}\left|
        \frac{\gamma(h-\ell^{(h)}_{z_0,\rho})}{\rho}\right|\,dz\right)
\]
where $\ell^{(h)}_r(x):=(h)_{z_0,r}+(Dh)_{z_0,r}(x-x_0)$ and  $C$ depends on only $n,N,L/\nu$.
\end{lem}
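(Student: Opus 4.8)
The plan is to reduce everything to the classical $\mathcal{A}$-caloric case, which is well documented, and then simply ``wrap'' $\psi$ around the resulting decay inequality using the monotonicity of $\psi$ and Jensen's inequality. First I would invoke the interior regularity theory for linear parabolic systems with constant coefficients: since $h$ satisfies $\int h\cdot\eta_t-\mathcal{A}(Dh,D\eta)\,dz=0$ for all test functions and $\mathcal{A}$ is strongly elliptic, a difference-quotient/bootstrap argument (or, equivalently, the hypoellipticity of the operator $\partial_t-\dive(\mathcal{A}D\cdot)$) shows $h\in C^\infty(\Q_R(z_0);\R^N)$, and moreover every spatial derivative $D^k h$ and time derivative $\partial_t^j h$ is again $\mathcal{A}$-caloric on slightly smaller cylinders, with Caccioppoli-type bounds. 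This is exactly \cite[Lemma 5.1]{Ca}, so I would only recall its statement and the shape of the estimate it provides.

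Next I would establish the \emph{first-order} (non-$\psi$) excess decay. Set $w:=h-\ell^{(h)}_{z_0,\rho}$, which is itself $\mathcal{A}$-caloric on $\Q_\rho(z_0)$ since affine functions lie in the kernel (constants and linear spatial functions are annihilated by $\partial_t-\dive(\mathcal{A}D\cdot)$). By the smoothness just obtained, $w$ admits a second-order Taylor expansion in the spatial variable around $x_0$ with a time-dependent remainder; the point is that $\ell^{(w)}_{z_0,\theta\rho}$ is the best affine $L^2$-approximation, so it beats the first-order Taylor polynomial of $w$, and the error of the latter on $\Q_{\theta\rho}(z_0)$ is controlled by $(\theta\rho)^2$ times a bound on $D^2 w$ and $\partial_t w$ on $\Q_{\rho/2}(z_0)$. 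Combining these with the a priori estimates $\sup_{\Q_{\rho/2}}\big(\rho^2|D^2 w|+\rho^2|\partial_t w|\big)\le C\-int_{\Q_\rho}|w|\,dz$ (a consequence of the Caccioppoli/Campanato estimates for the constant-coefficient system, with $C=C(n,N,L/\nu)$), and noting $\gamma(h-\ell^{(h)}_{z_0,\theta\rho})=\gamma(w-\ell^{(w)}_{z_0,\theta\rho})$ by uniqueness of the affine minimizer, yields the pointwise bound
\[
    \sup_{\Q_{\theta\rho}(z_0)}\left|\frac{\gamma(h-\ell^{(h)}_{z_0,\theta\rho})}{\theta\rho}\right|
    \le C\theta\-int_{\Q_\rho(z_0)}\left|\frac{\gamma(h-\ell^{(h)}_{z_0,\rho})}{\rho}\right|\,dz,
\]
for $0<\theta<1/4$, with $C=C(n,N,L/\nu)$.

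Finally I would upgrade this to the $\psi$-version. Since $\psi$ is increasing, applying it to the pointwise bound above gives, for every $z\in\Q_{\theta\rho}(z_0)$,
\[
    \psi\!\left(\left|\frac{\gamma(h-\ell^{(h)}_{z_0,\theta\rho})}{\theta\rho}\right|\right)
    \le
    \psi\!\left(C\theta\-int_{\Q_\rho(z_0)}\left|\frac{\gamma(h-\ell^{(h)}_{z_0,\rho})}{\rho}\right|\,dz\right),
\]
and the right-hand side is a constant, so averaging over $\Q_{\theta\rho}(z_0)$ leaves it unchanged, which is precisely the claimed estimate. (If one instead wanted the integral appearing inside $\psi$ on the left to also be an average, Jensen's inequality for the convex... actually $\psi$ need not be convex here, so the sup-bound route above is cleanest.)

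The main obstacle, and the only genuinely non-routine point, is the derivation of the a priori pointwise gradient/Hessian estimates for the constant-coefficient system in terms of the $L^1$-average of $h-\ell^{(h)}_{z_0,\rho}$ — that is, making precise that differentiated $\mathcal{A}$-caloric functions still satisfy the energy estimates with the right scaling in $\rho$ and with constants depending only on $n,N,L/\nu$. This is classical (it is the parabolic analogue of the estimates used in the elliptic $\mathcal{A}$-harmonic approximation), so rather than reproving it I would quote \cite[Lemma 5.1]{Ca} for the substance and indicate only the minor modification needed to insert a general increasing $\psi$, namely the monotonicity argument of the last paragraph.
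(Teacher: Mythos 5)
You follow essentially the paper's route---reduce the lemma to a pointwise decay bound for $\A$-caloric maps via the interior estimates of \cite{Ca}, then conclude using only the monotonicity of $\psi$ (you correctly observe that Jensen is unavailable since $\psi$ need not be convex)---but the step in which you actually produce the pointwise bound has a gap. The affine map $\ell^{(h)}_{z_0,r}(x)=(h)_{z_0,r}+(Dh)_{z_0,r}(x-x_0)$ of this lemma is \emph{not} the $L^2$-minimizing affine map: by \eqref{E:AffineMin} and \eqref{gradaffine} the minimizer has slope given by the momentum $P_{z_0,r}$, not by the mean $(Dh)_{z_0,r}$. Hence both of your appeals to minimality are misplaced: the identity $h-\ell^{(h)}_{z_0,\theta\rho}=w-\ell^{(w)}_{z_0,\theta\rho}$ (with $w=h-\ell^{(h)}_{z_0,\rho}$) is true, but because $g\mapsto\ell^{(g)}_{z_0,r}$ is linear and reproduces time-independent affine maps (the spatial average of $x-x_0$ over $\B_{\theta\rho}(x_0)$ vanishes), not ``by uniqueness of the affine minimizer''; and ``the best $L^2$ approximation beats the first-order Taylor polynomial'' is simply not available for this $\ell^{(h)}$. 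More seriously, even with the genuine $L^2$ minimizer, that comparison would only control $w-\ell^{(w)}_{z_0,\theta\rho}$ in an averaged sense, whereas your final display---and the lemma itself, precisely because $\psi$ is merely increasing---requires a $\sup$ bound on $\Q_{\theta\rho}(z_0)$. As written, the passage from ``beats the Taylor polynomial'' to the pointwise inequality is unjustified.

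The repair is the direct oscillation computation the paper performs: for $z\in\Q_{\theta\rho}(z_0)$,
\[
    |h-\ell^{(h)}_{z_0,\theta\rho}|
    \le
    \theta\rho\sup_{\Q_{\theta\rho}(z_0)}|Dh-(Dh)_{z_0,\theta\rho}|
    +\theta^2\rho^2\sup_{\Q_{\theta\rho}(z_0)}|\partial_t h|
    \le
    C\theta^2\rho^2\left(\sup_{\Q_{\rho/2}(z_0)}|D^2w_\rho|
    +\theta\rho\sup_{\Q_{\rho/2}(z_0)}|D^3w_\rho|\right),
\]
where $w_\rho=h-\ell^{(h)}_{z_0,\rho}$ and the equation is used to convert $\partial_t h$ and $\partial_t Dh$ into $D^2w_\rho$ and $D^3w_\rho$; then the sup estimates \eqref{E:CaloricBnd} (from (5.9) and (5.12) of \cite{Ca}, cf.\ \cite{Bogelein}) give $\sup_{\Q_{\theta\rho}(z_0)}|h-\ell^{(h)}_{z_0,\theta\rho}|\le C\theta^2\-int_{\Q_\rho(z_0)}|w_\rho|\,dz$, and dividing by $\theta\rho$, applying the increasing $\psi$ pointwise and averaging yields the lemma, exactly as in your last paragraph. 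Note also that controlling the time oscillation of $Dh$ requires $\partial_t Dh$, i.e.\ third-order information ($D^3w_\rho$), which your list of a priori bounds ($D^2w$ and $\partial_t w$ only) omits; this is where one uses that derivatives of $\A$-caloric maps are again $\A$-caloric.
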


\begin{proof}
It is only necessary to prove the estimate, since the smoothness of $h$ is already contained in \cite{Ca}. As argued in \cite[Remark 3.2 and Lemma 3.3]{Bogelein}, 
we may use (5.9) and (5.12) in \cite{Ca} to show that there exists $C'=C'(n,L/\nu)<\infty$ such that
\begin{equation}\label{E:CaloricBnd}
    \sup_{\Q_{\rho/2}(z_0)} |D^2w|
    \le C'\-int_{\Q_\rho(z_0)}\left|\frac{w}{\rho^2}\right|\, d z
    \quad\text{ and }\quad
    \sup_{\Q_{\rho/2}(z_0)} |D^3w|
    \le C'\-int_{\Q_\rho(z_0)}\left|\frac{w}{\rho^3}\right|\, d z,
\end{equation}
for any $\A$-caloric map $w\in C^\infty(\Q_R(z_0);\R^{N})$. Define $w_r= h-\ell^{(h)}_{z_0,r}$. Then $w_r$ is $\A$-caloric and $w_r\in C^\infty(\Q_R(z_0);\R^N)$, for each $0<r<R$. Let $0<\theta<\frac{1}{4}$ and $0<\rho\le R$ be given. Using~\eqref{E:CaloricBnd}, the fact that $\Q_r(z_0)$ is a standard parabolic cylinder, and the fact that every derivative of $h$ is still $\A$-caloric, for each $(x,t)\in\Q_{\theta\rho}(z_0)$, we have
\begin{align*}
    |w_{\theta\rho}(x,t)|
    \le&\theta\rho\sup_{\Q_{\theta\rho}(z_0)}|Dh-(Dh)_{z_0,\theta\rho}|+\theta^2\rho^2\sup_{\Q_{\theta\rho}(z_0)}|\partial_th|\\
    \le&
    \theta^2\rho^2\left(\sup_{\Q_{\theta\rho}(z_0)}|D^2h|+\theta\rho\sup_{\Q_{\theta\rho}(z_0)}|\partial_t Dh|+\sup_{\Q_{\theta\rho}(z_0)}|\partial_t h|
        \right)\\
    \le&
    C''\theta^2\rho^2\left(\sup_{\Q_{\rho/2}(z_0)}
        |D^2w_\rho|+\theta\rho\sup_{\Q_{\rho/2}(z_0)}|D^3w_\rho|\right)\\
    \le&
    C\theta^2\-int_{\Q_{\rho}(z_0)}|w_\rho|\, d z.
\end{align*}
Here, $C''$ and $C$ depend on only $n$, $N$, and $L/\nu$. The result follows from this and the definition of $w_\rho$.
\end{proof}

\section{$\mathcal A$-caloric approximation}\label{Acal}

\noindent
To prove the partial regularity for non-degenerate parabolic systems with $\V$-growth, we shall compare the solution of our parabolic system with the solution of a linear parabolic system with constant coefficients. The comparison will be achieved by a generalization of the $\mathcal A$-caloric approximation lemma in Orlicz spaces. We emphasize that the approximation lemma requires no upper bound on the growth of $\V$.
\smallskip

\noindent
Recall that a function $f:[0,+\infty)\to[0,+\infty)$ is said to be almost increasing if there exists $\Lambda\ge 1$ such that $f(t)\le\Lambda f(s)$ for every $0\le t<s<+\infty$. We will consider the following assumptions for the $N$-function $\V$, more general with respect to Assumption~\ref{assumption}:
\begin{itemize}
    \item[(H1)] There exists a $\displaystyle{p_0>\frac{2n}{n+2}}$ such that $\displaystyle{\frac{\V(t)}{t^{p_0}}}$ is almost increasing, 
    \item[(H2)] $\V$ has a uniform doubling property near zero; i.e. $\displaystyle{\limsup_{t\to0^+} \frac{\V(2t)}{\V(t)}=\Delta_0(\V)<\infty}$.
\end{itemize}
In general, an $N$-function might not satisfy assumption (H2). For example, with
\[
    \ell_k(t)=\frac{1}{k!}+\frac{2^{k}(k-1)}{k!}(t-2^{-k}),
    \quad\text{ for }k\in\mathbb{N},
\]
the $N$-function
\[
    \V(t)=\left\{\begin{array}{ll}
        0, & t=0,\\
        \ell_{k+1}(t), & k\in\mathbb{N}\setminus \{1\}\text{ and }2^{-k-1}\le t<2^{-k}\\
        8t^2, & 2^{-2}\le t,
    \end{array}\right.
\]
is not uniformly doubling near zero since $(k+1)\ell_{k+1}(2^{-k-1})=\ell_k(2^{-k})$. For any $N$-function and $a>0$, however, (H2) is satisfied by the shifted function $\V_a(t)$. In fact,
\[
    \frac{\V_a(2t)}{\V_a(t)}\le \frac{4\V'(2a)}{\V'(a)},
    \quad\text{ for all }0<t\le \frac{a}{2}.
\]

\subsection{Additional Notation and Supporting Results}
For this section, we introduce some additional notation. There are also several supporting results used in the proof of the approximation lemma.

\noindent
First, we require a compactness principle of Simon. 
\begin{thm} \cite[Theorem 6]{Simon}\label{comp}
Suppose that $X\subseteq B\subseteq Y$ are Banach spaces with a compact embedding $X\to B$. Given $1<q\le \infty$, assume
\begin{itemize}
    \item $F$ is bounded in $L^q(0,T;B)\cap L^1_{loc}(0,T;X)$, 
    \item for all $0<t_1<t_2<T$, $\|f(\cdot+h)-f(\cdot)\|_{L^p(t_1,t_2;Y)}\to 0$ as $h\to0$, uniformly for $f\in F$. 
\end{itemize}
Then $F$ is relatively compact in $L^p(0,T;B)$ for all $1\le p<q$.
\end{thm}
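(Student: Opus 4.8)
The plan is to obtain Theorem~\ref{comp} from two classical ingredients: an Ehrling--Lions compact interpolation inequality adapted to the triple $X\subseteq B\subseteq Y$, and the vector-valued Fréchet--Kolmogorov characterization of precompactness in a Bochner space $L^p(0,T;B)$. Writing $\tau_h f(t)=f(t+h)$, I would first establish the interpolation inequality: for every $\varepsilon>0$ there is $C_\varepsilon<\infty$ such that
\[
    \|u\|_B\le\varepsilon\|u\|_X+C_\varepsilon\|u\|_Y\qquad\text{for all }u\in X.
\]
This follows by contradiction from the standing hypotheses: were it to fail for some $\varepsilon_0>0$, one could find $u_k\in X$ with $\|u_k\|_B=1$, $\|u_k\|_X\le\varepsilon_0^{-1}$ and $\|u_k\|_Y\le k^{-1}$; compactness of $X\hookrightarrow B$ extracts a subsequence converging in $B$ to some $u$ with $\|u\|_B=1$, while continuity of $B\hookrightarrow Y$ forces $u_k\to u$ in $Y$, whence $u=0$, a contradiction.

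Next I would invoke (and, if desired, reprove by mollification in the time variable) the criterion that a bounded family $G\subseteq L^1(0,T;B)$ is relatively compact in $L^1(0,T;B)$ if and only if (i) for every $0<t_1<t_2<T$ the set $\bigl\{\int_{t_1}^{t_2}f(t)\,dt:f\in G\bigr\}$ is relatively compact in $B$, and (ii) $\sup_{f\in G}\|\tau_h f-f\|_{L^1(0,T-h;B)}\to0$ as $h\to0^+$. For the family $F$ of the statement, condition (i) holds because, by the $L^1_{\mathrm{loc}}(0,T;X)$-bound, $\int_{t_1}^{t_2}f(t)\,dt$ ranges over a bounded subset of $X$, which is precompact in $B$ by the compact embedding. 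For (ii), I would split $(0,T-h)$ into a central interval $(\delta,T-\delta)$ and two end strips: on the strips, $\|\tau_h f-f\|_{L^1(\cdot;B)}\le C\delta^{1-1/q}\|f\|_{L^q(0,T;B)}$, uniformly in $f$, since $q>1$; on the central interval I would apply the interpolation inequality pointwise in time,
\[
    \|\tau_h f(t)-f(t)\|_B\le\varepsilon\bigl(\|\tau_h f(t)\|_X+\|f(t)\|_X\bigr)+C_\varepsilon\|\tau_h f(t)-f(t)\|_Y,
\]
integrate, and use the $L^1$-in-time bound in $X$ together with the assumed uniform smallness of $\|\tau_h f-f\|_{L^p(Y)}$ (converted to $L^1$-in-time by Hölder); choosing first $\delta$ and $\varepsilon$ small and then $h$ small yields (ii). Thus $F$ is relatively compact in $L^1(0,T;B)$, and since it is bounded in $L^q(0,T;B)$ with $q>p$, log-convexity of the $L^p(0,T;B)$-norms gives, for $1<p<q$, $\|g\|_{L^p}\le\|g\|_{L^1}^{1-\theta}\|g\|_{L^q}^{\theta}$ with $\theta\in(0,1)$ (the case $p=1$ being immediate); hence any $L^1(0,T;B)$-convergent sequence drawn from $F$ also converges in $L^p(0,T;B)$, proving the theorem.

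The main obstacle is the mismatch between the information available and the conclusion sought: the ``good'' space $X$ enters only through an $L^1$-in-time, local-in-time bound --- with no control of time translations in $X$ and nothing whatsoever near $t=0$ and $t=T$ --- while the conclusion is a \emph{global}, $L^p$-in-time precompactness statement on the \emph{intermediate} space $B$. Reconciling these is exactly the delicate combination above: extracting equicontinuity of time translations in $L^1(0,T;B)$ by playing the compact interpolation inequality against the $L^p(Y)$-hypothesis, absorbing the endpoint strips using only the $L^q$-boundedness with $q>1$, and then bootstrapping from $L^1(0,T;B)$ to $L^p(0,T;B)$; establishing the vector-valued Fréchet--Kolmogorov criterion itself, via a time-averaging argument, is the other substantive component.
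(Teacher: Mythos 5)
The paper does not actually prove this statement: it is imported as \cite[Theorem 6]{Simon} (Simon's compactness theorem) and used as a black box, so there is no in-paper argument to compare yours against. What you have written is, in essence, a reconstruction of Simon's own proof: the compact-interpolation (Ehrling--Lions) inequality $\|u\|_B\le\varepsilon\|u\|_X+C_\varepsilon\|u\|_Y$ is Simon's Lemma 8, and the averaging/translation characterization of precompactness in $L^1(0,T;B)$ that you invoke is precisely Simon's Theorem 1; so your argument is correct but not independent of the cited source, which is perfectly acceptable here. The details check out: the contradiction proof of the interpolation inequality correctly uses compactness of $X\hookrightarrow B$ and the (implicitly assumed) continuity of $B\hookrightarrow Y$; condition (i) of the criterion follows from the $L^1_{loc}(0,T;X)$-bound together with compactness of $X\hookrightarrow B$; and your splitting for condition (ii) works provided the choices are made in the right order --- first $\delta$ (the end strips are controlled by the $L^q(0,T;B)$-bound and $q>1$), then $\varepsilon$ depending on $\delta$ (since $A_\delta=\sup_{f\in F}\int_{\delta/2}^{T-\delta/2}\|f\|_X\,dt$ may blow up as $\delta\to0$), then $h$ with $|h|<\delta/2$ so that both $f(t)$ and $f(t+h)$ remain in a fixed compact subinterval where the local $X$-bound and the $Y$-translation hypothesis (converted from $L^p$ to $L^1$ by H\"older on a bounded interval) apply. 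The final bootstrap from $L^1(0,T;B)$ to $L^p(0,T;B)$ for $p<q$ by interpolating against the uniform $L^q(0,T;B)$-bound is also correct, including the case $q=\infty$.
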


\noindent
We also need to work with the Orlicz norm: given a measurable $E\subseteq\R^n$,
\[
    \|f\|_{L^*_{\V}(E)}=\sup\left\{\int_Ef(y)g(y) dy
    :\int_E\V^*(|g(y)|) dy\le 1\right\}.
\]
It can be verified~\cite{Krasnoselski} that the Orlicz space
\[
    L^*_{\V}(E)=\{f\in L^1(E): \|f\|_{L^*_{\V}(E)}<\infty\}
\]
is a Banach space. The Orlicz norm is equivalent to the Luxemborg norm (\cite{Krasnoselski}, p. 80). Moreover, as established in~\cite[Lemma 9.2 and p. 75]{Krasnoselski}, given $\{f_k\}_{k=1}^\infty\subseteq L^*_{\V}(E)$ and $f\in L^*_{\V}(E)$,
\begin{equation}\label{E:OConvMConv}
    \lim_{k\to\infty}\|f_k-f\|_{L^*_{\V}(E)}= 0
    \Longrightarrow
    \lim_{k\to\infty}\int_E\V(\lambda|f_k(x)-f(x)|)d x=0,
    \quad\text{ for all }\lambda>0.
\end{equation}

\noindent
We will also need the following
\begin{defn}
Given an open set $E\subseteq\R^n$ and $f\in L^1_{loc}(E)$, the \emph{(non-centered) Hardy-Littlewood maximal operator} is $M(f):E\to[0,\infty]$
\[
    Mf(x)=\sup_{\B\ni x}\-int_{\B\cap E}|f(y)|dy.
\]
Here the supremum is taken over all balls containing $x$.
\end{defn}

\noindent
It is well-known that the maximal operator is bounded on $L^p$, for $p>1$. From ~\cite[Corollary 4.3.3]{Harjulehto}, we have
\begin{cor}\label{C:MaxOpBnd}
Let an open set $E\subseteq\R^n$ and an $N$-function $\V$ be given. If $p>1$ and $\displaystyle{\frac{\V(t)}{t^{p}}}$ is almost increasing, then there exists a $\beta>0$ such that
\[
    \V(\beta Mf(x))^\frac{1}{p}\lesssim M\left(\V(f)^\frac{1}{p}\right)(x)
\]
for every ball $\B$, $x\in\B\cap E$, and $f\in L^{\V}(E)$ satisfying $\displaystyle{\int_E\V(f)dx\le1}$.
\end{cor}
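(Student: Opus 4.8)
The plan is to reduce the pointwise inequality to a single‑ball estimate and then to exploit the fact that the hypothesis ``$t\mapsto\V(t)/t^p$ is almost increasing'' becomes, after the substitution $s\mapsto\V^{-1}(s^p)$, a \emph{sublinear} growth bound. Fix a ball $\B$ and a point $x\in\B\cap E$. Since $s\mapsto\V(s)^{1/p}$ is continuous and increasing and since $\-int_{\B\cap E}\V(|f|)^{1/p}\,dy\le M(\V(f)^{1/p})(x)$ for every such ball, it suffices to establish
\[
    \V\!\left(\beta\-int_{\B\cap E}|f|\,dy\right)^{1/p}\le\-int_{\B\cap E}\V(|f|)^{1/p}\,dy ;
\]
taking the supremum over all balls containing $x$ then upgrades the left‑hand side to $\V(\beta Mf(x))^{1/p}$ and yields the assertion (indeed with implicit constant $1$).

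To prove this single‑ball estimate I would set $g:=\V(|f|)^{1/p}$ and $h(s):=\V^{-1}(s^p)$. Because an $N$‑function is a continuous increasing bijection of $[0,\infty)$, $\V^{-1}$ is a genuine inverse, $h$ is continuous, increasing, $h(0)=0$, and $h(g)=|f|$ pointwise. Writing the almost‑monotonicity of $\V(t)/t^p$ in the form $\V(\lambda t)\ge\Lambda^{-1}\lambda^{p}\V(t)$ for $\lambda\ge1$ (with $\Lambda\ge1$), one checks $h(\lambda s)\le\Lambda^{1/p}\lambda\,h(s)$ for every $\lambda\ge1$: apply the increasing map $\V$ to this inequality, cancel using $\V\circ\V^{-1}=\mathrm{id}$, and reduce it to the preceding display with $\mu:=\Lambda^{1/p}\lambda\ge1$ in place of $\lambda$. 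Equivalently, $s\mapsto h(s)/s$ is almost decreasing, i.e.\ $h$ is ``almost concave''. Now set $\bar g:=\-int_{\B\cap E}g\,dy$ and split the average of $h(g)$ over $\{g\le\bar g\}$ and $\{g>\bar g\}$: on the first set $h(g)\le h(\bar g)$ by monotonicity, so its contribution is at most $h(\bar g)$; on the second, $h(g)=h\big((g/\bar g)\bar g\big)\le\Lambda^{1/p}(g/\bar g)h(\bar g)$, and integrating $g$ there against $h(\bar g)/\bar g$ contributes at most $\Lambda^{1/p}h(\bar g)$ since $\-int_{\B\cap E}g\,dy=\bar g$. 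Hence $\-int_{\B\cap E}h(g)\,dy\le(1+\Lambda)\,h(\bar g)$.

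Finally I would take $\beta:=(1+\Lambda)^{-1}$, so that $\beta\-int_{\B\cap E}|f|\,dy=\beta\-int_{\B\cap E}h(g)\,dy\le h(\bar g)=\V^{-1}(\bar g^{p})$; applying $\V$ and then the $p$‑th root gives $\V(\beta\-int_{\B\cap E}|f|\,dy)^{1/p}\le\bar g=\-int_{\B\cap E}\V(|f|)^{1/p}\,dy$, which is exactly what was needed. I expect the only genuinely delicate point to be the transfer of the almost‑monotonicity of $\V(t)/t^{p}$ to the sublinear bound for $h$, together with the bookkeeping around the generalized inverse; everything else is the elementary splitting of an integral. Note that this argument uses no upper growth (no $\Delta_2$) for $\V$, which is essential in this section, and that the normalization $\int_E\V(f)\,dy\le1$ appearing in the statement is not actually needed. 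Of course this is precisely \cite[Corollary 4.3.3]{Harjulehto}, which one may alternatively invoke directly.
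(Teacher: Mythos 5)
Your proof is correct, and it is genuinely different in character from what the paper does: the paper gives no argument at all for this corollary, but simply imports it from \cite[Corollary 4.3.3]{Harjulehto}, where it is proved for generalized Orlicz functions $\varphi(x,t)$ and where the normalization $\int_E\V(f)\,dx\le 1$ really matters (it is used to control the $x$-dependence there). In the autonomous setting your elementary route works and is more transparent: converting the almost-increase of $\V(t)/t^p$ into the sublinearity $h(\lambda s)\le\Lambda^{1/p}\lambda\,h(s)$ for $\lambda\ge1$ of $h(s)=\V^{-1}(s^p)$ is exactly right, the two-set splitting gives the almost-Jensen bound $\-int_{\B\cap E}h(g)\,dy\le(1+\Lambda^{1/p})\,h(\bar g)$ with $g=\V(|f|)^{1/p}$, and with $\beta=(1+\Lambda)^{-1}$ the single-ball inequality follows with constant $1$; passing to the supremum over balls containing $x$ is legitimate because $t\mapsto\V(\beta t)^{1/p}$ is increasing and continuous (and both sides are $+\infty$ when $Mf(x)=\infty$). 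Your remark that neither $\Delta_2$ nor the normalization is needed for the $x$-independent statement is also correct; the hypothesis in the corollary is simply inherited from the non-autonomous reference, and your version is compatible with (in fact stronger than) the stated $\lesssim$. Two small points to tidy up: the splitting actually yields $(1+\Lambda^{1/p})h(\bar g)$, so writing $(1+\Lambda)$ is a harmless enlargement consistent with your choice of $\beta$; and the degenerate case $\bar g=0$, where the rescaling $g=(g/\bar g)\bar g$ is unavailable, should be dismissed separately — trivially, since then $f=0$ a.e.\ on $\B\cap E$ and both sides vanish. In short, the citation buys brevity and the more general non-autonomous statement, while your argument buys a self-contained proof with an explicit $\beta=(1+\Lambda)^{-1}$ and constant $1$.
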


\noindent
Finally, as explained in the proof of the $\mathcal A$-caloric excess estimate (Lemma~\ref{L:CaloricDecay}), we may use the regularity provided in (5.9) and (5.12) in~\cite{Ca} to show there is a $C=C(L/\nu)<\infty$ such that
\begin{equation}\label{E:SupCaloricBnd}
    \sup_{\Q_{\tau R}(z_0)}\left(|Dw|^2+|w|^2\right)
    \le C
    \-int_{\Q_R(z_0)}|w|^2dz,
\end{equation}
for any $\frac{3}{4}\le R\le 1$, $\frac{1}{2}\le\tau\le\frac{3}{4}$, and $\mathcal A$-caloric map $w\in L^1(t_0-R^2,0;W^{1,2}(\B_R(x_0);\R^N))$.

\subsection{The $\mathcal A$-Caloric Approximation Lemma}
With the preliminaries above, we can state and prove the main result for this section.

\begin{thm}\label{T:ApproxLemma}
Suppose that (H1) and (H2) are satisfied. Let $\e,\nu>0$ and $\nu<L<\infty$ be given. There exists $\delta_0\le 1$ and $1\le K_{\V}$ with the following property: for any $\gamma\in(0,1/\omega_n]$ ($\omega_n$ being the measure of the unit sphere in $\R^n$), any bilinear form $\mathcal A$ satisfying
\[
    \mathcal A(\xi,\xi)\ge\nu|\xi|^2
    \quad\text{ and }\quad
    |\mathcal A(\xi,\eta)|\le L|\xi||\eta|,
\]
and any approximately $\mathcal A$-caloric map $v\in L^\infty(t_0-\rho^2,t_0;L^2(\B_\rho(x_0);\R^N))\cap L^1(t_0-\rho^2,t_0;W^{1,1}(\B_\rho(x_0);\R^N))$ satisfying:
 \begin{itemize}
    \item $\V(|Dv|)\in L^1(t_0-\rho^2,t_0;L^1(\B_\rho(x_0)))$, 
    \item for some $0<\delta\le\delta_0$, 
\begin{equation}\label{E:ACaloricBnd}
    \left|\-int_{\Q_\rho(z_0)}\left( v\cdot\partial_t\eta-\mathcal A(Dv,D\eta)\right) dz\right|
    \le
    \delta\sup_{\Q_\rho(z_0)}|D\eta|,\:\text{ for all }\eta\in C^\infty_c(\Q_\rho(z_0);\R^N), 
\end{equation}
    \item and 
\begin{equation}\label{E:NormBnd}
    \sup_{t_0-\rho^2<t<t_0}\-int_{B_\rho}\left|\frac{v}{\rho}\right|^2d x
        +\-int_{\Q_\rho(z_0)}\left(\V\left(\left|\frac{v}{\rho}\right|\right)
        +\V(|Dv|)\right)  d z\le\gamma^2, 
\end{equation}
\end{itemize}
then there exists an $\mathcal A$-caloric map $h\in L^2(t_0-\rho^2/4,t_0;W^{1,2}(\B_{\rho/2}(x_0);\R^N))$ such that
\[
    \-int_{\Q_{\rho/2(z_0)}}\left(\left|\frac{\gamma h}{\rho/2}\right|^2
    +\V\left(\left|\frac{\gamma h}{\rho/2}\right|\right)
    +\V(|\gamma Dh|)\right) d z
    \le
    2^{n+2}K_{\V}\gamma^2,
\]
and
\[
    \-int_{\Q_{\rho/2(z_0)}}\left(\left|\frac{v-\gamma h}{\rho/2}\right|^2
    +\V\left(\left|\frac{v-\gamma h}{\rho/2}\right|\right)\right) d z
    \le\e\gamma^2.
\]
The constant $K_{\V}$ is defined in~\eqref{E:Kphi} and depends only on $\V$ and $C(L/\nu)$ in~\eqref{E:SupCaloricBnd}.
\end{thm}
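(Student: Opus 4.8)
The plan is to argue by contradiction, following the classical blow-up scheme for $\mathcal A$-caloric approximation but with the extra care demanded by the Orlicz setting. Suppose the statement fails for some fixed $\e,\nu,L$; then for every $j\in\mathbb N$ there is a sequence $\gamma_j\in(0,1/\omega_n]$, bilinear forms $\mathcal A_j$ with the stated bounds, and approximately $\mathcal A_j$-caloric maps $v_j$ (on cylinders $\Q_{\rho_j}(z_j)$, which after translation and scaling we may take to be $\Q_1(0)$) satisfying \eqref{E:ACaloricBnd} with $\delta=\delta_j\to 0$ and the normalization \eqref{E:NormBnd}, but for which no $\mathcal A_j$-caloric $h$ yields both conclusions. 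Set $w_j:=v_j/\gamma_j$. The bound \eqref{E:NormBnd} gives $\sup_t\-int_{\B_1}|w_j|^2\,dx\le 1$ and $\-int_{\Q_1}\bigl(\V(\gamma_j|w_j|)+\V(\gamma_j|Dw_j|)\bigr)\,dz\le\gamma_j^2$; by convexity and (H1) (super-$\frac{2n}{n+2}$ growth, hence in particular super-linear after rescaling appropriately) this forces a uniform $L^2$-in-space, $L^1$-in-gradient control, and crucially $Dw_j$ is bounded in $L^1(\Q_1)$ with $\V(\gamma_j|Dw_j|)/\gamma_j^2$ controlled. The weak formulation \eqref{E:ACaloricBnd} rescales to $|\-int_{\Q_1}(w_j\partial_t\eta-\mathcal A_j(Dw_j,D\eta))\,dz|\le(\delta_j/\gamma_j)\sup|D\eta|$; since $\delta_j\le\delta_0$ is at our disposal we choose $\delta_0$ (and implicitly relate $\delta_j$ to $\gamma_j$ through the usual diagonal trick — if $\gamma_j\not\to 0$ we pass to a subsequence with $\gamma_j\to\gamma_*>0$ and the argument is easier; if $\gamma_j\to0$ we still only need $\delta_j/\gamma_j\to 0$, which is arranged by choosing $\delta_0$ small and noting the failure set is monotone) so that the right-hand side tends to $0$.

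\textbf{Compactness and the limit equation.} After passing to a subsequence, $\mathcal A_j\to\mathcal A$ (entries are bounded), $Dw_j\rightharpoonup Dw$ weakly in $L^1$ (or better; Dunford–Pettis applies since $\V(\gamma_j|Dw_j|)/\gamma_j^2$ is bounded and $\V$ is an $N$-function, giving equi-integrability of $Dw_j$), and $w_j\rightharpoonup w$ with $w\in L^\infty(-1,0;L^2(\B_1))\cap L^1(-1,0;W^{1,1}(\B_1))$. To upgrade weak convergence of $w_j$ to \emph{strong} convergence we invoke Simon's compactness theorem (Theorem~\ref{comp}) with $X=W^{1,1}(\B_1)$ (or a suitable interpolation space), $B=L^2(\B_1)$, $Y=(W^{1,q}_0)^*$ for large $q$: the spatial bound gives boundedness in $L^\infty(-1,0;L^2)\cap L^1(-1,0;W^{1,1})$, and the time-translation estimate follows from \eqref{E:ACaloricBnd} (the equation controls $\partial_t w_j$ in $L^1(-1,0;(W^{1,q}_0)^*)$ since $\mathcal A_j(Dw_j,\cdot)$ pairs against $Dw$ of test functions). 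Hence $w_j\to w$ strongly in $L^2(\Q_1)$, and passing to the limit in the rescaled weak formulation shows $w$ is $\mathcal A$-caloric on $\Q_1$. By Lemma~\ref{L:CaloricDecay} (or \eqref{E:SupCaloricBnd}), $w$ is smooth with interior bounds controlled by $\-int_{\Q_1}|w|^2\,dz\le 1$.

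\textbf{Constructing $h$ and deriving the contradiction.} Take $h:=w$ restricted to $\Q_{1/2}(0)$; the interior regularity estimate \eqref{E:SupCaloricBnd} bounds $\sup_{\Q_{3/4}}(|Dw|^2+|w|^2)$ by a dimensional constant times $\-int_{\Q_1}|w|^2\le 1$, and combining this with Lemma~\ref{L:OrliczPower}(a) (to pass from $L^2$ and $L^{p_1}$ bounds — available since $w$ is bounded — back to $\V$ and $\V_a$ controls) and the doubling/scaling of $\V$ produces $\-int_{\Q_{1/2}}(|\gamma_j w|^2+\V(|\gamma_j w|)+\V(|\gamma_j Dw|))\,dz\le 2^{n+2}K_{\V}\gamma_j^2$, which \emph{defines} $K_{\V}$ in \eqref{E:Kphi} (a quantity depending only on the structural constants of $\V$ near the relevant scale and on $C(L/\nu)$). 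This gives the first conclusion for $h$. For the second, we must show $\-int_{\Q_{1/2}}(|v_j-\gamma_j h|^2/(\rho/2)^2+\V(|v_j-\gamma_j h|/(\rho/2)))\,dz\le\e\gamma_j^2$ for large $j$, i.e., after rescaling, $\-int_{\Q_{1/2}}(|w_j-w|^2+\V(\gamma_j|w_j-w|))\,dz\le\e$ (absorbing a factor of $\gamma_j^{-2}$ into the $\V$-term using \eqref{t1} since $\gamma_j\le 1/\omega_n$ and $\V(\gamma_j t)\le\gamma_j^{p_0}\V(t)$ only when the argument is $\le$ some threshold — this is exactly where the shifted-function machinery and (H2) enter). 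The $L^2$ part vanishes by the strong $L^2$ convergence just established. The genuinely hard part is the $\V$-term: $w_j-w$ converges to $0$ only in $L^2$, not in $L^{\V}$, and in the Orlicz setting $L^{\V}(\Q_\rho)\subsetneq L^{\V}(-\rho^2,0;L^{\V}(\B_\rho))$, so one cannot directly quote a stronger Simon-type statement. This is where the "combination of approximations via convolution, sophisticated pointwise estimates, and integral bounds for the non-centered Hardy–Littlewood maximal function" advertised in the introduction is deployed: one mollifies $w_j-w$ in space, controls the mollification error pointwise by $M(D(w_j-w))$-type quantities, applies Corollary~\ref{C:MaxOpBnd} together with (H1) to bound $\V$ of the maximal function by the maximal function of $\V^{1/p_0}$ (which is bounded on $L^{p_0}$), and uses \eqref{E:OConvMConv} together with equi-integrability of $\V(\gamma_j|Dw_j|)/\gamma_j^2$ to make the whole $\V$-term arbitrarily small. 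I expect this last Orlicz-convergence upgrade to be the principal obstacle; everything else is a faithful, if technically involved, transcription of the $p$-growth proof.
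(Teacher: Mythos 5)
Your overall blow-up scheme, and in particular your identification of the hard step (upgrading the strong $L^2$ convergence coming from Simon's compactness to convergence of the $\V$-term via spatial mollification, the pointwise bound of the mollification error by the non-centered maximal function of the gradient, Corollary~\ref{C:MaxOpBnd}, the $L^{p_0}$-boundedness of $M$, and the duality statement~\eqref{E:OConvMConv}), is exactly the route the paper takes. But there is a genuine gap at the final step: you take the comparison map to be $h:=w$, the limit map. The limit $w$ is caloric for the \emph{limit} form $\mathcal A$, whereas the contradiction hypothesis (the negation of the theorem) asserts, for each $j$, that \emph{no $\mathcal A_j$-caloric} map with the stated energy bound approximates $v_j$; since the forms $\mathcal A_j$ vary along the sequence and only converge to $\mathcal A$, the map $w$ is in general not $\mathcal A_j$-caloric and therefore does not contradict anything. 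The paper repairs precisely this point by solving, for each $k$, the $\mathcal A_k$-caloric Cauchy--Dirichlet problem on $\Q_{3/4}$ with boundary datum $\gamma_k^{-1}v$, and then proving $\gamma_k h_k\to v$ and $\gamma_k Dh_k\to Dv$ in $L^2$, upgraded to the Orlicz sense using the interior sup bounds~\eqref{E:SupCaloricBnd}; the energy bound for $h_k$ (and the definition of $K_{\V}$ in~\eqref{E:Kphi}) then comes from these convergences together with (H2). Without this construction your argument does not close.

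A second problem is your normalization $w_j=v_j/\gamma_j$: the rescaled weak form has right-hand side $(\delta_j/\gamma_j)\sup|D\eta|$, and your claim that $\delta_j/\gamma_j\to0$ "is arranged by choosing $\delta_0$ small and noting the failure set is monotone" is not valid — in the contradiction setup $\gamma_j$ is produced by the failure hypothesis and may tend to $0$ arbitrarily fast compared with $\delta_j\le 1/j$, so you cannot enforce $\delta_j/\gamma_j\to0$. The paper avoids this entirely by not dividing by $\gamma_k$ (the smallness~\eqref{E:NormBnd} is already stated relative to $\gamma_k^2$, and $\gamma_k\le 1/\omega_n$ suffices for compactness), treating $\gamma_k\to0$ as a separate, easy case and running the main argument only when $\gamma_k\to\gamma>0$, where division by $\gamma_k$ is harmless. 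A further minor point: in Simon's theorem (Theorem~\ref{comp}) you cannot take $X=W^{1,1}(\B_1)$, since $W^{1,1}$ does not embed compactly into $L^2$ when $n\ge3$; this is exactly why (H1) with $p_0>\frac{2n}{n+2}$ is used in the paper to work with $X=W^{1,p_0}(\B_1;\R^N)$.
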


\begin{proof}
We translate and rescale to $z_0=(0,0)$ and $\rho=1$. Assuming the alternative, there exists an $\e_0>0$, sequences $\gamma_k\in(0,1/\omega_n]$, bilinear forms $\mathcal A_k$, and maps $v_k\in L^\infty(-1,0;L^2(\B_1;\R^N))\cap L^1(-1,0;W^{1,1}(\B_1;\R^N))$, such that for each $k\in\mathbb{N}$, the following holds:
\begin{itemize}
\item[(i)] $\V(|Dv_k|)\in L^1(-1,0;L^1(\B_1))$, 
\item[(ii)] $\displaystyle{\-int_{\Q_1}\left( v_k\cdot\partial_t\eta-\mathcal A_k(Dv_k,D\eta)\right) d z
    \le
    \frac{1}{k}\displaystyle{\sup_{\Q_1}}|D\eta|,\:}$ for all $\eta\in C^\infty_0(\Q_1;\R^N)$,
\item[(iii)] $\displaystyle{\sup_{t\in(-1,0)}\-int_{\B_1}|v_k|^2 d x
+\-int_{\Q_1}\left(\V(|v_k|)+\V(|Dv_k|)\right) d z\le\gamma_k^2}$,
\item[(iv)] for any $\mathcal A_k$-caloric map $h\in C^\infty(\Q_{1/2};\R^N)$ satisfying
\[
    \-int_{\Q_{1/2}}\left(4|\gamma_k h|^2+\V(2|\gamma_kh|)
    +\V(|\gamma_kDh|)\right) d z
    \le
    2^{n+2}K_{\V}\gamma_k^2,
\]
we find
\[
    \-int_{\Q_{1/2}}\left(4|v_k-\gamma_kh|^2+\V(2|v_k-\gamma_kh|)\right) d z
    >\e_0\gamma_k^2.
\]
\end{itemize}

\noindent
By (iii) the sequence  $\{\V(|Dv_k|)\}_{k=1}^\infty$ is bounded in $L^1(-1,0;L^1(\B_1;\R^N))$. Assumption (H1) implies the existence of a constant $C$ such that
\[
    \|v_k\|_{L^{p_0}(\Q_1)}
    +\|Dv_k\|_{L^{p_0}(\Q_1)}\le C.
\]
It follows that, for a non-relabeled sequence, there exists a bilinear form $\mathcal A$ and a map $v\in L^2(\Q_1)\cap L^{p_0}(\Q_1)$ such that $Dv\in L^{p_0}(\Q_1)$ and
\[
    \left\{\begin{array}{ll}
         v_k\rightharpoonup v & \text{ in }L^2(\Q_1;\R^N),\\
         Dv_k\rightharpoonup Dv & \text{ in }L^{p_0}(\Q_1;\R^{Nn}),\\
         \mathcal A_k\to \mathcal A & \text{ in bilinear forms on $\R^{Nn}$},\\
         \gamma_k\to\gamma\in[0,1/\omega_n].
    \end{array}\right.
\]
The convexity of ${\V}$ 
implies
\begin{equation}\label{E:vBnd}
    \int_{\Q_1}\left(|v|^2+{\V}(|v|)+{\V}(|Dv|)\right) d z\le\gamma^2.
\end{equation}
Moreover, with the same argument used in~\cite{scheven}, we conclude that $v$ is $\mathcal A$-caloric and $v\in C^\infty(\Q_1;\R^N)$. From~\eqref{E:SupCaloricBnd},
\begin{equation}\label{E:vSupBnd}
    \sup_{\Q_{3/4}}{\left(|v|^2+|Dv|^2\right)}
    \le C(L/\nu)\-int_{\Q_1}|v|^2 dz
    \le C(L/\nu).
\end{equation}

\noindent As demonstrated in~\cite{scheven}, given $\ell>\frac{n+2}{2}$, for any $-1<t_1<t_2<-h$,
\begin{align*}
&\|v_k(\cdot,s)-v_k(\cdot,s+h)\|_{W^{-\ell,2}(\B_1)} \leq C\left( h^\frac{p_0-1}{p_0}+\frac{1}{k}\right), \quad \mbox{ for } s\in(t_1,t_2) \nonumber \\ 
& \Longrightarrow \int_{t_1}^{t_2}\|v_k(\cdot,s)-v_k(\cdot,s+h)\|^p_{W^{-\ell,2}(\B_1)} ds \leq C\left( h^\frac{p(p_0-1)}{p_0}+\frac{1}{k^p}\right), 
\end{align*}
for any $p\ge 1$.

\noindent Moreover, the sequence  $\{v_k\}_{k=1}^\infty$ is uniformly bounded in $L^\infty(-1,0;L^2(\B_1;\R^N))\cap L^1_{loc}(-1,0;W^{1,p_0}(\B_1;\R^N))$. With $X=W^{1,p_0}(\B_1;\R^N)$, $B=L^2(\B_1;\R^N)$, and $Y=W^{-\ell,2}(\B_1;\R^N)$, Theorem \ref{comp} yields the strong convergence (for a non-relabeled subsequence)
\begin{equation}\label{E:vkConvL2}
    v_k\to v\quad\text{ in }L^p(-1,0;L^2(\B_1;\R^N)),
\end{equation}
for any $p\ge 1$. This entails also that $ v_k\to v$ in $L^p(-1,0;L^1(\B_1;\R^N))$ for any $p\ge 1$, so $\|v_k-v\|_{L^1(B_1;\R^n)}\to 0$ for almost every $-1<t<0$.
\smallskip

\textbf{Claim}: $\displaystyle{\lim_{k\to\infty}\int_{\Q_{3/4}}{\V}(\lambda|v_k-v|) \,d z=0}$, for all $\lambda>0$.\\
First, we observe that $v_k(\cdot,t)$ and $v(\cdot,t)$  belong to $W^{1,1}(\B_1,\R^N)$, for almost every time $t\in(-1,0)$. We may therefore extend them to the whole of $\R^n$ in such a way that their extensions $\widetilde{v}_k(\cdot,t)$ and $\widetilde{v}(\cdot,t)$ belong to $W^{1,1}(\R^n,\R^N)$, and 
$$
\|\widetilde{v}_k(\cdot,t)-\widetilde{v}(\cdot,t)\|_{W^{1,1}(\R^n,\R^N)}\le C\|{v_k}(\cdot,t)-v(\cdot,t)\|_{W^{1,1}(\B_1,\R^N)}, \quad \mbox{ for a.e. } t\in (-1, 0), 
$$
where the constant $C$ depends only on $\B_1$. Let $\sigma$ be the standard mollifier and $\sigma_{\e}(x)=\frac{1}{\e^n}\sigma\left(\frac{|x|}{\e}\right)$.

\noindent
On account of~\eqref{E:OConvMConv}, it is enough to verify
\[
    \lim_{k\to\infty}\int_{\Q_{3/4}}|v_k-v|g dz=0
    \quad\text{ for all }g\in L^1(\Q_{3/4})\text{ such that }\|\V^*(|g|)\|_{L^1(\Q_{3/4})}\le 1.
\]
Fix $g\in L^1(\Q_{3/4})$ satisfying $\|\V^*(|g|)\|_{L^1(\Q_{3/4})}\le 1$. With $0<\e<1$ given, for each $-1<t<0$, we let $\widetilde{v}_k*\sigma_{\e}$ and $\widetilde{v}*\sigma_{\e}$ denote the mollifications of the extended maps $\widetilde{v}_k$ and $\widetilde{v}$ in the spatial direction. We have
\begin{align}\label{E:OrliczNormBnd}
&\left|\int_{\Q_{3/4}}|v_k(z)-v(z)|g(z)dz\right| \nonumber\\
&\leq \int_{\Q_{3/4}}|\widetilde{v}_k(x,t)-\widetilde{v}(x,t)||g(x,t)|dxdt \nonumber \\ 
&\leq \left[\int_{\Q_{3/4}}|(\widetilde{v}_k*\sigma_{\e})(x,t)-\widetilde{v}_k(x,t)||g(x,t)|dxdt +\int_{\Q_{3/4}}|(\widetilde{v}_k
            -\widetilde{v})*\sigma_{\e}(x,t)||g(x,t)|dxdt\right. \nonumber\\
&\qquad\left. +\int_{\Q_{3/4}}|(\widetilde{v}*\sigma_{\e})(x,t)-v(x,t)||g(x,t)|dxdt\right] \nonumber\\
&=: [I_{1,k}+I_{2,k}+I_{3,k}].        
\end{align}


\noindent
First, we examine $\displaystyle{\lim_{k\to\infty}I_{2,k}}$. Given $-1<t<0$, we use Young's convolution inequality to write 
\begin{align*}
    \sup_{k\in\mathbb{N}}
        \|(\widetilde{v}_k-\widetilde{v})*\sigma_{\e}(\cdot,t)\|_{L^\infty(\B_{3/4})}
    \le&
    \sup_{k\in\mathbb{N}}
        \|\widetilde{v}_k(\cdot,t)-\widetilde{v}(\cdot,t)\|_{L^1(\B_{3/4};\R^N)}
        \|\sigma_{\e}\|_{L^\infty(3/4)}\\
    \le&
    \sup_{k\in\mathbb{N}}
        \left(\|\widetilde{v}_k(\cdot,t)\|_{L^1(\B_{3/4};\R^N)}
            +\|\widetilde{v}(\cdot,t)\|_{L^1(\B_{3/4};\R^N)}\right)
        \|\sigma_{\e}\|_{L^\infty(3/4)}\\
    \le&
    C\sup_{k\in\mathbb{N}}
        \left(\|v_k(\cdot,t)\|_{L^2(\B_1;\R^N)}
            +\|v(\cdot,t)\|_{L^2(\B_1;\R^N)}\right)\|\sigma_{\e}\|_{L^\infty(\R^n)}\\
    \le&
    C\|\sigma_{\e}\|_{L^\infty(\R^n)}.
\end{align*}
For the last two inequalities, we used (iii).
Since $g\in L^1(\Q_{3/4})$ and
\[
    \lim_{k\to\infty}|(\widetilde{v}_k-\widetilde{v})*\sigma_{\e}(x,t)||g(x,t)|\to0
    \quad\text{ for a.e. }(x,t)\in\Q_{3/4},
\]
we may use the dominated convergence theorem to conclude that $\displaystyle{\lim_{k\to\infty}I_{2,k}=0}$.

\noindent
Now, we turn to bounding $I_{1,k}$ and $I_{3,k}$. The arguments for each term are similar, so we focus on $I_{1,k}$. We will use some results contained in \cite{Harjulehto}. As provided in \cite{Harjulehto} (p. 135), the following pointwise estimate holds for functions in $W^{1,1}_{loc}(\R^n,\R^N)$:
\[
|\widetilde{v}_k(x,\cdot)-(\widetilde{v}_k\ast\sigma_{\e})(x,\cdot)|\le \e\int_0^1(|D \widetilde{v}_k|\ast\sigma_{\e\tau})(x,\cdot)\,d\tau,
\]
almost everywhere in $\B_{3/4}$. Let $\beta>0$ be the constant from Corollary~\ref{C:MaxOpBnd}. Combining the bound above with \cite[Lemma 4.4.6]{Harjulehto}, for each $-1<t<0$, we obtain
\[
|\widetilde{v}_k(x,t)-(\widetilde{v}_k\ast\sigma_{\e})(x,t)|\le\frac{2\e}{\beta} M(\beta|D \widetilde{v}_k|(\cdot,t))(x),
\quad\text{ for a.e. }x\in\B_{3/4}.
\]
Here $M$ is the (non-centered Hardy-Littlewood) maximal function defined earlier. Observing that, for almost every $t\in(-1,0)$, we have
\begin{align*}
M(\beta|D\widetilde{v}_k(\cdot,t)|)(x)&=\sup_{\B\ni x}\frac{\beta}{|\B|}\int_{\B\cap\B_1} |D\widetilde{v}_k(y,t)|\,dy\\
&\le \sup_{\Q\ni (x,t)}\frac{\beta}{|\Q|}\int_{\Q\cap\Q_1} |D\widetilde{v}_k(y,s)|\,dy\,ds=M(\beta|D\widetilde{v}_k|)(x,t),
\end{align*}
we deduce that
\[
    |\widetilde{v}_k(z)-(\widetilde{v}_k\ast\sigma_{\e})(z)|
    \le\frac{2\e}{\beta} M(\beta|D\widetilde{v}_k|)(z),
    \quad\text{ for a.e. }z\in\Q_{3/4}.
\]
Incorporating this into the definition of $I_{1,k}$ and applying Young's inequality, we may write
\begin{align*}
    I_{1,k}
    \le&
    \frac{2\e}{\beta}\int_{Q_{3/4}}M(\beta|D\widetilde{v}_k|)(z)|g(z)|dz\\
    \le&
    \frac{2\e}{\beta}\left[\int_{Q_{3/4}}\V(M(\beta |D\widetilde{v}_k|)(z))dz
    +\int_{Q_{3/4}}\V^*(|g(z)|)dz\right]\\
    \le&
    \frac{2\e}{\beta}\left[\int_{Q_{3/4}}\V(M(\beta |D\widetilde{v}_k|)(z))dz
    +1\right],
\end{align*}
where we have used $\|\V^*(|g|)\|_{L^1(Q_{3/4})}\le 1$ in the last inequality. Recalling that $\displaystyle{\int_{\Q_1}\V(|D v_k|)\,dz\le 1}$ and that $\displaystyle{\frac{\V(t)}{t^{p_0}}}$ is almost increasing, we can use Corollary~\ref{C:MaxOpBnd} to infer
$$
\V(\beta M(|D v_k|)(z))^\frac{1}{p_0}
\lesssim M\left(\V(|Dv_k|)^{\frac{1}{p_0}}\right)(z),
$$
almost everywhere in $\Q_1$. Thus
\[
\int_{\Q_{3/4}}\V(\beta M(|D v_k|))\,dz\lesssim \int_{\Q_1}\left(M\left(\V(|D v_k|)^{\frac{1}{p_0}}\right)\right)^{p_0}dz\lesssim \int_{\Q_1}\V(|D v_k|)\,dz\le 1,
\]
since $M$ is bounded in $L^{p_0}$. We conclude that $I_{1,k}\lesssim\e/\beta$. A similar argument shows $I_{3,k}\lesssim\e\beta$, as well.

\noindent
Returning to~\eqref{E:OrliczNormBnd}, we have shown
\[
    \lim_{k\to\infty}\left|\int_{Q_{3/4}}|v_k(z)-v(z)|g(z)dz\right|\lesssim\frac{\e}{\beta}.
\]
Here $\beta$ is independent of $k$. Since $\e>0$ and $g\in L^1(Q_{3/4})$ satisfying $\|\V^*(|g|)\|_{L^1(Q_{3/4})}\le 1$ were both arbitrary, the claim is proved.

\noindent
Next, we produce a sequence $\{h_k\}_{k=1}^{\infty}\in C^\infty(\Q_{1/2};\R^N)$ of $\mathcal A$-caloric maps that will contradict (iv) for $k$ sufficiently large. 

\underline{Case 1: $\gamma_k\to0$}: In this case, clearly $v=0$, $v_k\to0$ strongly in $L^2(\Q_1)$ and $\displaystyle{\int_{\Q_1}\V(2|v_k|) d z\to 0}$. Thus, we obtain a contradiction to (iv) with $h\equiv0$.\\

\underline{Case 2: $\gamma_k\to\gamma\in(0,1/\omega_n]$}:  For each $k\in\mathbb{N}$, let $h_k$ be the unique solution to
\[
    \left\{\begin{array}{l}
        \displaystyle{\int_{\Q_{3/4}}\left( h_k\cdot\partial_t\eta-\mathcal A_k(Dh_k,D\eta)\right) d z=0}
            \quad\text{ for all }\eta\in C^\infty_c(\Q_{3/4};\R^N)\\
        h_k=\gamma_k^{-1}v\quad\text{ on }\partial_{\mathcal P}\Q_{3/4}.
    \end{array}\right.
\]
Since $v\in C^\infty(\overline{\Q}_{3/4};\R^N)$ and $\gamma_k\to\gamma>0$, so is each $h_k$. As shown in \cite{BDM} and \cite{scheven}, we have
\[
    \lim_{k\to\infty}\int_{\Q_{3/4}}
        \left(|v-\gamma_k h_k|^2+|Dv-\gamma_k Dh_k|^2\right) d z=0.
\]
Thus
\begin{equation}\label{E:hkConvL2}
    \gamma_k h_k\to v\text{ and }\gamma_k Dh_k\to Dv
    \quad\text{ in }L^2(\Q_{3/4}).
\end{equation}
This implies convergence in measure for both sequences. Moreover, after taking a non-relabeled subsequence if necessary, the bounds in~\eqref{E:SupCaloricBnd} and~~\eqref{E:vSupBnd} imply
\[
    \sup_{\Q_{1/2}}\left(|\gamma_kh_k|+|\gamma_kDh_k|\right)\le 2^{n+1}C(L/\nu).
\]
Thus, since $\sup_{k\in\mathbb{N}}\|\gamma_kh_k-v\|_{L^\infty(Q_{1/2})}+\|\gamma_kDh_k-Dv\|_{L^\infty(Q_{1/2})}\le 2^{n+3}C(L/\nu)$,
\begin{equation}\label{E:hkConvOrlicz}
    \lim_{k\to\infty}\int_{\Q_{1/2}}
    \left({\V}(\lambda|\gamma_k h_k-v|)+{\V}(\lambda|\gamma_k Dh_k-Dv|)\right) d z=0,
    \quad \text{ for all }\lambda>0.
\end{equation}
To finish the proof, define
\begin{equation}\label{E:Kphi}
    K_{\V}=8+\sup\left\{\frac{\V(4t)}{\V(t)}:0<t\le 2^{n+2}C(L/\nu)\right\}.
\end{equation}
Note that $K_{\V}$ must be finite due to assumption (H2). Using ~\eqref{E:vBnd} and the convexity of $\V$,
\begin{align*}
\lim_{k\to\infty} &\-int_{\Q_{1/2}}\left(4|\gamma_kh_k|^2+{\V}(2|\gamma_kh_k|)+{\V_1}(|\gamma_kDh_k|)\right) dz\\
&\leq \frac{1}{2}\lim_{k\to\infty}\-int_{\Q_{1/2}} \left(16|\gamma_kh_k-v|^2+{\V}(4|\gamma_kh_k-v|)+{\V}(2|\gamma_kDh_k-Dv|)\right) d z\\
&\quad+\frac{1}{2}\-int_{\Q_{1/2}}\left(16|v|^2+{\V}(4|v|)+{\V}(2|Dv|)\right) d z\\
&\leq 2^{n+1}K_{\V}\-int_{\Q_1} \left(|v|^2+{\V}(|v|)+{\V}(|Dv|)\right) d z\\
&\leq 2^{n+1}K_{\V}\gamma^2.
\end{align*}
Since $\gamma>0$, for $k$ sufficiently large, we have $\gamma<2\gamma_k$, and the $\mathcal A_k$-caloric map $h_k$ satisfies
\[
    \-int_{\Q_{1/2}}
        \left(4|\gamma_kh_k|^2+{\V_1}(2|\gamma_kh_k|)+{\V}(|\gamma_kDh_k|)\right) d z
    <
    2^{n+2}K_{\V}\gamma_k^2.
\]
Similarly, using the convergences provided by the claim,~\eqref{E:vkConvL2},~\eqref{E:hkConvL2}, and~\eqref{E:hkConvOrlicz} we conclude that
\begin{align*}
    \lim_{k\to\infty}&
    \-int_{\Q_{1/2}}\left(4|v_k-\gamma_kh_k|^2+{\V}(2|v_k-\gamma_kh_k|)\right) d z=0,
\end{align*}
which provides the contradiction to (iv).
\end{proof}

\begin{remark}\label{R:ApproxLemma}
For the shifted function $\V_a$,
\[
    K_{\V_a}\le 8+\max\left\{\left(\frac{4\V'(2a)}{\V'(a)}\right)^2,
        \frac{\V_a(2^{n+4}C(L/\nu))}{\V_a(a/4)}\right\}.
\]
If the function $\V$ is doubling, then $K_{\V}=8+\Delta_2(\V)^2$.
\end{remark}


\section{Caccioppoli}\label{Caccioppoli}

\noindent
Let us prove the following Caccioppoli inequality for standard parabolic cylinders.

\begin{thm}\label{cacc2}
Let $u\in C^0(-T,0; L^2(\Omega,\R^N))\cap L^1(-T,0; W^{1,1}(\Omega,\R^N))$  be a weak solution to  \eqref{P} satisfying $\V(|Du|)\in L^1(-T,0; L^1(\Omega))$. Under hypotheses $(a_{1})$-$(a_{4})$ and Assumption \ref{assumption}, given a standard cylinder $\Q_{R}(z_0)\subset\Omega_T$, with center $z_0=(x_0,t_0)$, and any affine map $\ell: \R^{n}\to \R^{N}$ and $0<r<R$, we have 
\begin{equation*}
\begin{split}
\sup_{t_0-{r^2}<s<t_0}\ &\int_{\B_{r}(x_0)}\left |{u(x,s)-\ell(x)}\right|^2dx+\int_{\Q_{r}(z_0)}\V_{1+|D\ell|}(|Du-D\ell|)dz\\
 \leq c_0&\int_{\Q_{R}(z_0)}\left [\V_{1+|D\ell|}\left(\left|\frac{u-\ell}{R-r}\right|\right)+\left|\frac{u-\ell}{R-r}\right|^2\right]dz,
\end{split}
\end{equation*}
where $c_0$ depends only on $n, N, L,\nu, p_0,p_1$.
\end{thm}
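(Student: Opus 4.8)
The strategy is the classical Caccioppoli argument adapted to the Orlicz setting, testing the weak formulation of \eqref{P} against $\varphi = \psi^2 (u - \ell)$ where $\psi$ is a cutoff in space-time, and then exploiting the ellipticity/growth of $a$ recorded in Remark~\ref{rem1} to convert things into the shifted $N$-function $\V_{1+|D\ell|}$. First I would fix a cutoff function $\psi \in C^\infty_c(\Q_R(z_0))$ with $\psi \equiv 1$ on $\Q_r(z_0)$, $0 \le \psi \le 1$, $|D\psi| \lesssim (R-r)^{-1}$ and $0 \le \partial_t \psi \lesssim (R-r)^{-2}$, together with a further Lipschitz cutoff $\chi_s$ in time to localize to time slices and obtain the supremum term. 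Using the test function $\eta = \psi^2(u-\ell)\chi_s$ (made rigorous via Steklov averages as in Remark~\ref{ut}), the term $\int u_t \cdot \eta\,dz$ produces, after an integration by parts in $t$, the slice term $\tfrac12 \int_{\B_r}\psi^2 |u-\ell|^2\,dx$ at time $s$ minus a term $\int \psi \partial_t\psi |u-\ell|^2\,dz$, while the diffusion term gives $\int \psi^2 a(Du)\cdot D(u-\ell)\,dz + 2\int \psi\, a(Du)\cdot \big((u-\ell)\otimes D\psi\big)\,dz$.

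For the principal term, I would write $a(Du)\cdot D(u-\ell) = (a(Du) - a(D\ell))\cdot(Du - D\ell) + a(D\ell)\cdot(Du-D\ell)$; the first piece is bounded below by $c\,\V_{1+|D\ell|}(|Du-D\ell|)$ by the first inequality in Remark~\ref{rem1}, while the second piece, since $D\ell$ is constant, integrates against $D\psi$ only and is handled as an error term. The crossed error term $2\int \psi\, a(Du)\cdot\big((u-\ell)\otimes D\psi\big)\,dz$ is estimated by writing $a(Du) = (a(Du)-a(D\ell)) + a(D\ell)$; using the second inequality in Remark~\ref{rem1}, $|a(Du)-a(D\ell)| \le c\,\V'_{1+|D\ell|}(|Du-D\ell|)$, so the dangerous part becomes $\lesssim \int \psi\,\V'_{1+|D\ell|}(|Du-D\ell|)\,\frac{|u-\ell|}{R-r}\,dz$, which by Young's inequality for the pair $(\V_{1+|D\ell|}, (\V_{1+|D\ell|})^*)$ — applicable uniformly in the shift thanks to Lemma~\ref{phia} — is absorbed as $\delta \int \psi\,\V_{1+|D\ell|}(|Du-D\ell|)\,dz + c_\delta \int (\V_{1+|D\ell|})^*\!\big(\V'_{1+|D\ell|}(\tfrac{|u-\ell|}{R-r})\big)\,dz \lesssim \delta\int\psi\,\V_{1+|D\ell|}(|Du-D\ell|)\,dz + c_\delta\int\V_{1+|D\ell|}(\tfrac{|u-\ell|}{R-r})\,dz$, where I used $(\V_a)^*(\V_a'(t)) \sim \V_a(t)$ from \eqref{t3}/\eqref{t6}. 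The remaining error terms — the ones involving $|a(D\ell)|$, $\partial_t\psi$, and the quadratic term from the time slice — contribute respectively $\V'_{1+|D\ell|}(0)=0$-type vanishing or bounds of the form $\int |u-\ell|^2(R-r)^{-2}\,dz$ and, after another Young's inequality, $\int\V_{1+|D\ell|}(\tfrac{|u-\ell|}{R-r})\,dz + \int|\tfrac{u-\ell}{R-r}|^2\,dz$; note $a(D\ell)$ is a fixed vector, so that piece is $\int \psi^2 a(D\ell)\cdot(Du-D\ell)\,dz = -2\int\psi\,a(D\ell)\cdot((u-\ell)\otimes D\psi)\,dz \lesssim \int |a(D\ell)|\,\psi\,\tfrac{|u-\ell|}{R-r}\,dz$, again split by Young.

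Collecting everything, choosing $\delta$ small to absorb the $\delta\int\psi^2\V_{1+|D\ell|}(|Du-D\ell|)$ term into the left-hand side, and then taking the supremum over $s \in (t_0-r^2,t_0)$ of the slice term yields the claimed inequality with $c_0 = c_0(n,N,L,\nu,p_0,p_1)$; the dependence is only on the structural parameters because every application of Young's inequality for $\V_{1+|D\ell|}$ and its conjugate has constants controlled by $\Delta_2(\V_{1+|D\ell|},(\V_{1+|D\ell|})^*)$, which is uniform in the shift and depends only on $p_0,p_1$ by Lemma~\ref{phia} and Remark~\ref{phiap0p1}. The main obstacle I anticipate is the bookkeeping around the shifted Young's inequality: one must consistently use the shift $a = 1+|D\ell|$ (rather than $|D\ell|$ or $1+|Du|$) so that the comparisons $(\V_a)^*(\V_a'(t))\sim\V_a(t)$ and the $\Delta_2$-bounds hold uniformly, and one must be careful that the quadratic $|u-\ell|^2$ terms coming from the parabolic time-derivative are genuinely lower-order and survive in the final right-hand side rather than needing to be absorbed. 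A secondary technical point is justifying the testing procedure rigorously via Steklov averaging, which I would only indicate, referring to Remark~\ref{ut}.
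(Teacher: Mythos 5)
Your overall strategy coincides with the paper's (test with a cutoff times $u-\ell$, lower bound via Remark~\ref{rem1}, shifted Young's inequality, absorption, then the sup over time slices via an extra time cutoff), but the way you treat the terms containing $a(D\ell)$ is not correct as written. These terms do not vanish ``because $\V'_{1+|D\ell|}(0)=0$'': $a(D\ell)$ is a fixed matrix with $|a(D\ell)|\le L\V'(1+|D\ell|)$, and for the prototype it is genuinely of that size. Nor can you afford to estimate $\int \psi\,|a(D\ell)|\,\tfrac{|u-\ell|}{R-r}\,dz$ by Young's inequality for the pair $\bigl(\V_{1+|D\ell|},(\V_{1+|D\ell|})^*\bigr)$: by Lemma~\ref{phia} and \eqref{t3}, $(\V_{1+|D\ell|})^*\bigl(|a(D\ell)|\bigr)$ is in general of order $\V(1+|D\ell|)$, so this route leaves an additive term comparable to $\V(1+|D\ell|)\,|\Q_R|$ which is not present on, and cannot be absorbed into, the right-hand side of the theorem. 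The correct observation --- the one the paper uses --- is that these contributions cancel exactly: since $a(D\ell)$ is constant and $\eta(\cdot,t)$ is compactly supported in $\B_R(x_0)$, one has $\int_{\Q_R}a(D\ell)\cdot D\eta\,dz=0$; equivalently, the identity you derived, $\int\psi^2 a(D\ell)\cdot(Du-D\ell)\,dz=-2\int\psi\,a(D\ell)\cdot\bigl((u-\ell)\otimes D\psi\bigr)\,dz$, shows this piece is exactly the negative of the $a(D\ell)$-part of your crossed error term, so no estimate is needed at all.

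There is also a gap in the absorption step. With the cutoff $\psi^2$, the crossed term carries only one power of $\psi$, and the shifted Young inequality together with \eqref{t1conj} and Remark~\ref{phiap0p1} gives $(\V_M)^*\bigl(\psi\,\V'_M(|Du-D\ell|)\bigr)\le\psi^{\frac{p_1}{p_1-1}}(\V_M)^*\bigl(\V'_M(|Du-D\ell|)\bigr)\lesssim\psi^{\frac{p_1}{p_1-1}}\V_M(|Du-D\ell|)$, where $M=1+|D\ell|$. Since $p_1>2$ we have $\frac{p_1}{p_1-1}<2$, so the term you must absorb is $\delta\int\psi^{p_1/(p_1-1)}\V_M(|Du-D\ell|)\,dz$, whose cutoff weight is \emph{larger} than the weight $\psi^2$ on the left-hand side wherever $\psi<1$; no choice of a structural $\delta$ makes this integral dominated by the left-hand side (your write-up is also inconsistent here, stating $\delta\int\psi\,\V_M$ in one place and claiming to absorb $\delta\int\psi^2\V_M$ in another). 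This is precisely why the paper's proof uses the test function $\chi^{p_1}\zeta^2(u-\ell)$: the crossed term then carries $\chi^{p_1-1}$ and $(p_1-1)\cdot\frac{p_1}{p_1-1}=p_1$, so after Young the absorbable term reappears with the full weight $\chi^{p_1}$ matching the left-hand side. Alternatively one can avoid pointwise absorption altogether by running the estimate on intermediate radii and invoking the iteration lemma \cite[Lemma 6.1]{Giu}, but some such device is needed; as stated, your absorption does not go through. The remaining ingredients of your plan (the time cutoff producing the slice term, the bound $\zeta_t\lesssim(R-r)^{-2}$, Steklov averaging) agree with the paper's proof.
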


\begin{proof}
For notational brevity, we put $M=1+|D\ell|$. Without loss of generality we may assume $z_0 =(0,0)$. For a generic radius $\rho$, we denote $\Q_\rho(z_0)=\Q_\rho$ and $\B_\rho(x_0)=\B_\rho$. 
Let us consider the function $\eta(x,t)=\chi^{p_1}(x)\zeta^2(t)(u(x,t)-\ell(x))$ as a test function in \eqref{P}, where $\chi$ is a standard cutoff function between $\B_{r}$ and $\B_R$, and $\zeta\in C^0(\R)$ is defined by 
\begin{align*}
\left\{
\begin{array}{ll}
\zeta(t)=0, & t\in (-\infty,-{R^2})\\
\zeta_t(t)=\frac{1}{R^2-r^2}, &t\in (-{R^2},-{r^2})\\
\zeta(t)=1, &t\in(-{r^2},s)\\
\zeta_t(t)=-\frac{1}{\varepsilon},& t\in(s, s+\varepsilon)\\
\zeta(t)=0, &t\in (s+\varepsilon,+\infty)
\end{array}
\right.
\end{align*}
for $-r^{2}<s<0$ and $0< \e \leq |s|$. 
We have
$$
\int_{\Q_R}\chi^{p_1}\zeta^2 a(Du)\cdot(Du-D\ell)\,dz=-p_1\int_{\Q_R}\chi^{p_1-1}\zeta^2a(Du)\cdot\left[D\chi\otimes(u-\ell)\right]\,dz+\int_{\Q_R}u\cdot\eta_t\,dz.
$$
Noting that $\displaystyle{\int_{\Q_R}a(D\ell)\cdot D\eta \,dz =0}$ and $\displaystyle{\int_{\Q_R}\ell\cdot\eta_t \,dz=0}$, we obtain
\begin{align}\label{E:I-IIIDef}\begin{split}
I:=&\int_{\Q_R}\chi^{p_1}\zeta^2(a(Du)-a(D\ell))\cdot(Du-D\ell)\,dz\\
=&-p_1\int_{\Q_R}\chi^{p_1-1}\zeta^2(a(Du)-a(D\ell))\cdot\left[D\chi\otimes(u-\ell)\right]\,dz+\int_{\Q_R}(u-\ell)\cdot\eta_t\,dz=:I\!I+I\!I\!I
\end{split}
\end{align}
The left hand side can be estimated thanks to Remark \ref{rem1}, leading to
\begin{equation}\label{E:CaccIBnd}
I\ge c\int_{\Q_R}\chi^{p_1}\zeta^2\V_{M}(|Du-D\ell|)\,dz
\end{equation}
and
\begin{equation*}
|I\!I|\le c\int_{\Q_R}\chi^{p_1-1}\zeta^2\V'_{M}(|Du-D\ell|)|D\chi||u-\ell|\,dz.
\end{equation*}
Using Young's inequality and \eqref{t1conj} together with \eqref{t3}, we derive the following bound for $I\!I$:
\begin{equation}\label{E:CaccIIBnd}
\begin{split}
|I\!I|&\le c\, \delta\int_{\Q_R}\zeta^2\V_{M}^*\left(\V'_{M}(|Du-D\ell|)\chi^{p_1-1}\right)\,dz+c(\delta)\int_{\Q_R}\zeta^2\V_{M}(|u-\ell||D\chi|)\,dz\\
&\le c\,\delta\int_{\Q_R}\chi^{p_1}\zeta^2\V_{M}(|Du-D\ell|)\,dz+c(\delta)\int_{\Q_R}\zeta^2\V_{M}\left(\frac{|u-\ell|}{R-r}\right)\,dz.
\end{split}
\end{equation}
Choosing $\delta$ sufficiently small, we can absorb the first integral of the right hand side into the left. Finally, expanding the derivative $\eta_t$, we may write (recalling Remark \ref{ut})
\begin{equation*}
\begin{split}
I\!I\!I&=2\int_{\Q_R}\chi^{p_1}\zeta\zeta_t|u-\ell|^2dz+\int_{\Q_R}\chi^{p_1}\zeta^2(u-\ell)\cdot(u-\ell)_t\,dz\\
&=2\int_{\Q_R}\chi^{p_1}\zeta\zeta_t|u-\ell|^2dz+\frac{1}{2}\int_{\Q_R}\chi^{p_1}\zeta^2\frac{\partial}{\partial t}|u-\ell|^2\,dz.
\end{split}
\end{equation*}
So, an integration by parts yields
$$
I\!I\!I=\int_{\Q_R}\chi^{p_1}\zeta\zeta_t|u-\ell|^2dz.
$$
Exploiting the definition of $\zeta$, we obtain
\begin{equation*}
\begin{split}
I\!I\!I&=\frac{1}{R^2-r^2}\int_{-{R^2}}^{-{r^2}}\int_{\B_R}|u-\ell|^2\chi^{p_1}\zeta\, dx\,dt-\frac{1}{\varepsilon}\int_s^{s+\varepsilon}\int_{\B_R}|u-\ell|^2\chi^{p_1}\zeta\,dx\,dt\\
&\le \int_{\Q_R}\left|\frac{u-\ell}{R-r}\right|^2\,dz-\frac{1}{\varepsilon}\int_s^{s+\varepsilon}\int_{\B_R}|u-\ell|^2\chi^{p_1}\zeta\,dx\,dt,
\end{split}
\end{equation*}
since $R^2-r^2\ge (R-r)^2$. Incorporating the above bound and the bounds for $I$ and $I\!I$, in~\eqref{E:CaccIBnd} and~\eqref{E:CaccIIBnd}, into~\eqref{E:I-IIIDef}, we deduce that
\begin{equation*}
\begin{split}
&\frac{1}{\varepsilon}\int_s^{s+\varepsilon}\int_{\B_R}|u-\ell|^2\chi^{p_1}\zeta\,dx\,dt+\int_{\Q_R}\chi^{p_1}\zeta^2\V_{M}(|Du-D\ell|)\,dz\le c\,\int_{\Q_R}\left|\frac{u-\ell}{R-r}\right|^2\,dz+c\,\int_{\Q_R}\V_{M}\left(\frac{|u-\ell|}{R-r}\right)\,dz.
\end{split}
\end{equation*}
Recalling the definition of $\zeta$ and $\chi$, we may take the limit as $\varepsilon\to 0$ to get
\begin{equation*}
\begin{split}
&\int_{\B_{r}}|u(s,x)-\ell(x)|^2dx+\int_{-{r^2}}^s\int_{\B_{r}}\V_{M}(|Du-D\ell|)\,dz\le c\,\int_{\Q_{R}}\left|\frac{u-\ell}{R-r}\right|^2\,dz+c\int_{\Q_R}\V_{M}\left(\frac{|u-\ell|}{R-r}\right)\,dz.
\end{split}
\end{equation*}
We use the previous inequality twice: firstly, by dropping the second term in the left hand side, and taking the supremum over $s\in(-{r^2},0)$; secondly, by dropping the first term in the left hand side and letting $s$ tend to $0$. By summing up the two resulting contributions, this gives the result.
\end{proof}

\noindent
Finally, an application of the Caccioppoli inequality in Theorem~\ref{cacc2} and (\ref{E:ShiftDouble}) produces the following

\begin{cor}\label{cacc3}
Let $u\in C^0(-T,0; L^2(\Omega,\R^N))\cap L^1(-T,0; W^{1,1}(\Omega,\R^N))$  be a weak solution to  \eqref{P} satisfying $\V(|Du|)\in L^1(-T,0; L^1(\Omega))$. Under hypotheses $(a_{1})$-$(a_{4})$ and Assumption \ref{assumption}, given any standard parabolic cylinder $\Q_{\rho}(z_0)\subset\Omega_T$, with center in $z_0=(x_0,t_0)$, and any affine map $\ell: \R^{n}\to \R^{N}$, we have 
\begin{equation*}
\begin{split}
\sup_{t_0-(\frac{\rho}{2})^2<s<t_0}\ &\-int_{\B_{\frac{\rho}{2}}(x_0)}\left |\frac{u(x,s)-\ell(x)}{{\rho}}\right|^2dx+\-int_{\Q_{\frac{\rho}{2}}(z_0)}\V_{1+|D\ell|}(|Du-D\ell|)dz\\
 \leq&
  c_02^{n+p_1+2}\-int_{\Q_{\rho}(z_0)}\left [\V_{1+|D\ell|}\left(\left|\frac{u-\ell}{\rho}\right|\right)+\left|\frac{u-\ell}{\rho}\right|^2\right]dz,
\end{split}
\end{equation*}
where $c_0$ depends only on $n, N, L,\nu, p_0,p_1$.
\end{cor}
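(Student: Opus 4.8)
The plan is to specialize the Caccioppoli inequality of Theorem~\ref{cacc2} to the radii $R=\rho$ and $r=\rho/2$, so that $R-r=\rho/2$, and then rewrite the resulting estimate in terms of averages. Writing $M=1+|D\ell|$ for brevity, Theorem~\ref{cacc2} with this choice bounds the quantity
\[
\sup_{t_0-(\rho/2)^2<s<t_0}\int_{\B_{\rho/2}(x_0)}|u(x,s)-\ell(x)|^2\,dx+\int_{\Q_{\rho/2}(z_0)}\V_M(|Du-D\ell|)\,dz
\]
by $c_0\int_{\Q_\rho(z_0)}[\V_M(2|u-\ell|/\rho)+4|u-\ell|^2/\rho^2]\,dz$, where $c_0$ is the constant from Theorem~\ref{cacc2}.

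The first step is to simplify the right-hand side. By the uniform doubling bound~\eqref{E:ShiftDouble} one has $\V_M(2t)\le\Delta_2(\V_M)\,\V_M(t)\le 2^{p_1}\V_M(t)$ for every $t\ge0$; combined with the trivial $4\le 2^{p_1}$, valid since $p_1>2$ by Assumption~\ref{assumption}, this shows that the integrand on the right is at most $2^{p_1}(\V_M(|u-\ell|/\rho)+|u-\ell|^2/\rho^2)$.

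The second step is to pass to averages: divide the inequality by $|\Q_\rho(z_0)|$ and multiply by $2^{n+2}$. Since $|\Q_{\rho/2}|=2^{-(n+2)}|\Q_\rho|$, the $\V_M(|Du-D\ell|)$ term turns into exactly $\-int_{\Q_{\rho/2}(z_0)}\V_M(|Du-D\ell|)\,dz$; since in addition $|\B_{\rho/2}|=2^{-n}|\B_\rho|$ and $|\Q_\rho|=\rho^2|\B_\rho|$, the supremum term turns into $4\sup_s\-int_{\B_{\rho/2}(x_0)}|(u(\cdot,s)-\ell)/\rho|^2\,dx$; and the right-hand side turns into $c_0\,2^{n+p_1+2}\-int_{\Q_\rho(z_0)}[\V_M(|(u-\ell)/\rho|)+|(u-\ell)/\rho|^2]\,dz$. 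Dropping the harmless factor $4$ in front of the supremum term yields exactly the asserted inequality.

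No step presents a genuine difficulty here; the argument is essentially bookkeeping of the geometric constants. The only point requiring a little care is the bound $\V_M(2\,\cdot)\le 2^{p_1}\V_M(\cdot)$ used to absorb the factor $2$ produced by $R-r=\rho/2$, which rests on~\eqref{E:ShiftDouble}; this estimate is available because, by Remark~\ref{phiap0p1}, $\V_M$ inherits Assumption~\ref{assumption} uniformly in $M\ge1$.
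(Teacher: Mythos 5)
Your proposal is correct and is exactly the argument the paper intends: apply Theorem~\ref{cacc2} with $R=\rho$, $r=\rho/2$, use \eqref{E:ShiftDouble} (i.e.\ $\Delta_2(\V_{1+|D\ell|})\le 2^{p_1}$, together with $4\le 2^{p_1}$) to absorb the factor $2$ from $R-r=\rho/2$, and then pass to averages using $|\Q_{\rho/2}|=2^{-(n+2)}|\Q_\rho|$, which yields the constant $c_02^{n+p_1+2}$. The bookkeeping of the supremum term (factor $4$ dropped) and of the constants matches the statement.
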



\section{Poincar\'e type inequalities}\label{poinc}

\noindent
We begin this section  providing a Poincar\'e type inequality valid for solutions to certain parabolic-like systems. The proof follows the same lines as \cite[Lemma 3.1]{BDM}.
\begin{lem}\label{L:PoincareLemma}
Let $\psi$ be an $N$-function satisfying $\Delta_2(\psi,\psi^*)<\infty$. With $t_1<t_2$ and $U\subseteq\R^n$, suppose that $\xi\in L^1(U\times(t_1,t_2),\R^{Nn})$ and 
$w\in C^0(t_1,t_2, L^{2}(U, \R^{N}))\cap L^{1}(t_1,t_2, W^{1, 1}(U,\R^N))$ satisfy $\psi(|Dw|)\in L^{1}(t_1,t_2, L^{1}(U))$ and
\begin{equation}\label{equa}
\int_{U\times(t_1,t_2)} (w\cdot\zeta_t-\xi \cdot D\zeta)\,  dz=0,
\quad\text{ for any }\zeta\in C^\infty_c(U\times (t_1,t_2),\R^N).
\end{equation}
Then for any parabolic cylinder $\Q_\rho(z_0)\subset U\times (t_1,t_2)$, we have
$$
\-int_{\Q_\rho(z_0)}\psi\left(\left|\frac{w-(w)_{z_0,\rho}}{\rho}\right|\right)\, dz\le c_1\left[\-int_{\Q_\rho(z_0)}\psi(|Dw|) \,dz+\psi\left(\-int_{\Q_\rho(z_0)}|\xi|\,dz\right)\right]
$$
with $c_1$ depending on only $n,N,$ and $\Delta_2(\psi,\psi^*)$.
\end{lem}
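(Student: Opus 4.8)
The plan is to follow the scheme of \cite[Lemma 3.1]{BDM}, splitting $w-(w)_{z_0,\rho}$ into a purely spatial oscillation and the time oscillation of the spatial mean. Write $z_0=(x_0,t_0)$, $\Q_\rho=\Q_\rho(z_0)=\B_\rho(x_0)\times(t_0-\rho^2,t_0)$, and set $a(t):=\-int_{\B_\rho(x_0)}w(x,t)\,dx$, so that by Fubini $(w)_{z_0,\rho}=\-int_{t_0-\rho^2}^{t_0}a(t)\,dt=:\bar a$. Decomposing $w-\bar a=(w-a(t))+(a(t)-\bar a)$ and using the $\Delta_2$-condition to split $\psi(s+t)\lesssim\psi(s)+\psi(t)$, it suffices to bound separately $\-int_{\Q_\rho}\psi(|w-a(t)|/\rho)\,dz$ and $\-int_{\Q_\rho}\psi(|a(t)-\bar a|/\rho)\,dz$.

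For the first term I would invoke the Orlicz--Poincar\'e inequality slicewise in time: for a.e.\ $t$, $\-int_{\B_\rho}\psi(|w(\cdot,t)-a(t)|/\rho)\,dx\le c\-int_{\B_\rho}\psi(|Dw(\cdot,t)|)\,dx$ with $c=c(n,\Delta_2(\psi,\psi^*))$; integrating in $t$ gives the bound by $\-int_{\Q_\rho}\psi(|Dw|)\,dz$. For the second term, since $a(t)-\bar a=\-int_{t_0-\rho^2}^{t_0}(a(t)-a(\tau))\,d\tau$ and the integrand of $\-int_{\Q_\rho}\psi(|a(t)-\bar a|/\rho)\,dz$ is $x$-independent, Jensen's inequality reduces matters to estimating $\psi(|a(t)-a(\tau)|/\rho)$ for a.e.\ $t,\tau$, i.e.\ to controlling the time oscillation of the spatial mean.

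This last estimate is the heart of the argument and uses equation~\eqref{equa}. Fix a bump $\varphi(x)=\rho^{-n}\phi((x-x_0)/\rho)$ with $\phi\in C^\infty_c(\B_1)$, $\int\phi=1$, so that $\int_{\B_\rho}\varphi\,dx=1$, $\|\varphi\|_{L^\infty}\lesssim|\B_\rho|^{-1}$ and $\|D\varphi\|_{L^\infty}\lesssim(\rho|\B_\rho|)^{-1}$, and test~\eqref{equa} with $\zeta(x,t)=\varphi(x)\alpha_\varepsilon(t)$, where $\alpha_\varepsilon$ is a Lipschitz time cutoff equal to $1$ on $[s_1,s_2]$ and vanishing outside its $\varepsilon$-neighborhood. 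Letting $\varepsilon\to0$ (legitimate since $w\in C^0(t_1,t_2;L^2(U))$ and $\xi\in L^1$) yields
\[
    \int_{\B_\rho}\varphi\,w(\cdot,s_1)\,dx-\int_{\B_\rho}\varphi\,w(\cdot,s_2)\,dx
    =\int_{s_1}^{s_2}\!\!\int_{\B_\rho}\xi\cdot D\varphi\,dx\,dt .
\]
Since $\int\varphi=1$, writing $\varphi\,w(\cdot,s)=\varphi\,(w(\cdot,s)-a(s))+a(s)\varphi$ turns the left-hand side into $a(s_1)-a(s_2)$ plus two terms of the form $\int_{\B_\rho}\varphi\,(w(\cdot,s)-a(s))\,dx$, controlled by $\|\varphi\|_{L^\infty}\int_{\B_\rho}|w(\cdot,s)-a(s)|\,dx\lesssim\-int_{\B_\rho}|w(\cdot,s)-a(s)|\,dx$; the right-hand side is at most $\|D\varphi\|_{L^\infty}\int_{\Q_\rho}|\xi|\,dz\lesssim\rho\-int_{\Q_\rho}|\xi|\,dz$, the scaling factor $|\Q_\rho|/(\rho|\B_\rho|)=\rho$ being exactly what is needed. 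Thus, with purely dimensional constants,
\[
    \frac{|a(s_1)-a(s_2)|}{\rho}
    \lesssim\sum_{i=1,2}\-int_{\B_\rho}\frac{|w(\cdot,s_i)-a(s_i)|}{\rho}\,dx
    +\-int_{\Q_\rho}|\xi|\,dz .
\]
Applying $\psi$, using $\Delta_2$ to split and Jensen to bring the slicewise $L^1$-averages inside $\psi$, and then the Orlicz--Poincar\'e inequality once more, one obtains $\-int_{\Q_\rho}\psi(|a(t)-\bar a|/\rho)\,dz\lesssim\-int_{\Q_\rho}\psi(|Dw|)\,dz+\psi(\-int_{\Q_\rho}|\xi|\,dz)$. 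Summing the two contributions gives the claim with $c_1=c_1(n,N,\Delta_2(\psi,\psi^*))$, the dependence on $\psi$ entering only through the Orlicz--Poincar\'e inequality and the $\Delta_2$-splittings. The genuinely delicate points are minor: the $\varepsilon\to0$ passage (routine given time-continuity of $w$) and the bookkeeping of the $\Delta_2$-splittings; I expect the organization of these splittings, rather than any real difficulty, to be the main obstacle.
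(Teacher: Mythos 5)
Your argument is correct and is essentially the paper's proof: both split $w-(w)_{z_0,\rho}$ into a spatial oscillation controlled slicewise by the Orlicz--Poincar\'e inequality and a time oscillation of spatial means controlled by testing \eqref{equa} with a scaled spatial bump (times a time cutoff), exploiting $\rho\|D\varphi\|_{L^\infty}|\B_\rho|\lesssim 1$ to arrive at $\psi\bigl(\-int_{\Q_\rho(z_0)}|\xi|\,dz\bigr)$. The only differences are cosmetic: the paper works throughout with the $\eta$-weighted mean $(w)_\eta(t)$ (invoking the Poincar\'e inequality for vanishing $\eta$-mean and handling a third comparison term), whereas you use the plain mean $a(t)$ and absorb the bump-versus-mean discrepancy as an extra Poincar\'e term, and you justify the time testing via a Lipschitz cutoff with $\varepsilon\to0$ instead of Steklov averages.
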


\begin{proof}
We fix a nonnegative symmetric weight function $\eta\in C^\infty_c(\B_\rho(x_0))$ such that 
\begin{align}\label{asseta}
\eta \geq 0, \quad \-int_{\B_{\rho}(x_{0})} \eta \, dx=1 \quad \mbox{ and } \quad \|\eta\|_{\infty}+ \rho \|D\eta\|_{\infty} \leq c_{\eta}, 
\end{align}
and, for $t\in \left(t_0-{\rho^2},t_0 \right)$ we denote
$$
(w)_\eta(t)=\-int_{\B_\rho(x_0)}w(x,t)\eta(x)\,dx,
$$
as well as 
$$
(w)_\eta=\-int_{\Q_\rho(z_0)}w(x,t)\eta(x)\,dz.
$$
By the triangle inequality and the $\Delta_2$-condition we have
\begin{align*}
&\-int_{\Q_\rho(z_0)}\psi\left(\left|\frac{w-(w)_{z_0,\rho}}{\rho}\right|\right)dz\\
&\quad \le c\left[\-int_{\Q_\rho(z_0)}\psi\left(\left|\frac{w-(w)_\eta(t)}{\rho}\right|\right)dz +\-int_{t_0-{\rho^2}}^{t_0}\psi\left(\left|\frac{(w)_\eta(t)-(w)_\eta}{\rho}\right|\right)dt + \psi\left(\left|\frac{(w)_\eta-(w)_{z_0,\rho}}{\rho}\right|\right)\right]\\
&\quad =:c\,(I+I\!I+I\!I\!I),
\end{align*}
with the obvious meaning of $I$,$I\!I$, and $I\!I\!I$. Since $\Delta_2(\psi,\psi^*)<\infty$, we may bound $I$ by applying Poincar\'e's inequality for vanishing $\eta$-mean value (see \cite[Theorem 7]{DE}) slicewise with respect to $x$: for a.e. $t\in(t_0-{\rho^2},t_0)$,
\begin{equation*}
I\le c(n,\Delta_2(\psi,\psi^*))\-int_{\Q_\rho(z_0)}\psi(|Dw|)dz.
\end{equation*}
To bound $I\!I\!I$, we use Jensen's inequality followed by the triangle inequality and $\Delta_2$-condition to infer
\begin{equation*}
\begin{split}
I\!I\!I\le\-int_{\Q_\rho(z_0)}\psi\left(\left|\frac{w-(w)_\eta}{\rho}\right|\right)\,dz&\le c\,\left[\-int_{\Q_\rho(z_0)}\psi\left(\left|\frac{w-(w)_\eta(t)}{\rho}\right|\right)dz+\-int_{t_0-{\rho^2}}^{t_0}\psi\left(\left|\frac{(w)_\eta(t)-(w)_\eta}{\rho}\right|\right)dt\right]\\
&=c(I+I\!I).
\end{split}
\end{equation*}
So it remains to estimate $I\!I$. For this, we recall that $w$ is a weak solution of the parabolic system \eqref{equa}. Even if the solution $w$, need not be differentiable in the time variable, with Steklov averages, we may rewrite~\eqref{equa} as
\[
    \int_U\left(w_t\cdot\zeta+\xi\cdot D\zeta\right)\,dx=0
    \quad
    \text{ for all }\zeta\in C^\infty_c(U,\R^N)
    \text{ and for a.e. }t\in(t_0-\rho^2,t_0).
\]
For $i=1,\dots,N$, let $e_i\in\R^N$ denote the unit vector in the $i$-th coordinate direction. With $(t,\tau)\subset(t_0-{\rho^2},t_0)$, we use \eqref{equa} and \eqref{asseta}, with $\zeta=\eta e_i\in C^\infty_c(U;\R^N)$, to write 
\begin{align*}
|\left[(w)_\eta(t)-(w)_\eta(\tau)\right]\cdot e_i|
    &=\left|\int_t^\tau\partial_s\left[(w)_\eta(s)\right]
        \cdot e_i\,ds\right|
    =\left|\int_t^\tau\-int_{\B_\rho(x_0)}w_s
        \cdot(\eta e_i) \,dx\,ds\right|\\
    &=\left|\int_t^\tau\-int_{\B_\rho(x_0)}\xi
        \cdot (D\eta e_i)\,dx\,ds\right|
     \le \|D\eta\|_\infty\int_t^\tau\-int_{\B_\rho(x_0)}|\xi|dx\,ds\\
     &\le \frac{c}{\rho}\int_t^\tau\-int_{\B_\rho(x_0)}|\xi|dx\,ds\le c\,{\rho}\-int_{\Q_\rho(z_0)}|\xi|dz.
\end{align*}
Summing over each component, we conclude that
\begin{equation*}
I\!I\le c\,\psi\left(\-int_{\Q_\rho(z_0)}|\xi|dz\right).
\end{equation*}
Combining these estimates we obtain the desired Poincar\'e type inequality.
\end{proof}

\begin{remark}
Suppose that $\mathcal{A}$ is a bilinear form on $\R^{Nn}$ and there is $\Lambda<\infty$ such that $|\mathcal{A}(\eta_1,\eta_2)|\le\Lambda|\eta_1||\eta_2|$, for all $\eta_1,\eta_2\in\R^{Nn}$. If $h\in C^\infty(\Q_\rho(z_0);\R^N)$ is $\mathcal{A}$-caloric, then we may identify a $\xi\in C^\infty(\Q_\rho(z_0);\R^{Nn})$ so that, at each $z\in\Q_\rho(z_0)$, we have $\mathcal{A}(Dh(z),\eta)=\xi(z)\cdot\eta$, for all $\eta\in\R^{Nn}$. Then $|\xi|\le\Lambda|Dh|$ and Lemma~\ref{L:PoincareLemma} and Jensen's inequality imply
\begin{equation*}
\-int_{\Q_\rho(z_0)}\psi\left(\left|\frac{h-(h)_{z_0,\rho}}{\rho}\right|\right)\, dz\le c^*_1\-int_{\Q_\rho(z_0)}\psi(|Dh|) \,dz,
\end{equation*}
with $\psi$ an $N$-function satisfying $\Delta_2(\psi,\psi^*)<\infty$ and $c^*_1$ depending on $n,N,\Lambda,$ and $\Delta_2(\psi,\psi^*)$.
\end{remark}


\noindent
The following Poincar\'e type inequality for weak solutions of \eqref{P} is a consequence of the previous lemma.

\begin{thm}\label{poincare}
Under the assumptions $(a_{1})$-$(a_{4})$ and Assumption \ref{assumption}, suppose $u\in C^0(-T,0; L^2(\Omega,\R^N))\cap L^1(-T,0; W^{1,1}(\Omega,\R^N))$ is a weak solution to \eqref{P} such that $\V(|Du|)\in L^1(-T,0;L^1(\Omega))$. Let $\Q_\rho(z_0)\subset\Omega_T$ be a standard parabolic cylinder. Then, for any $N$-function $\psi$ satisfying $\Delta_2(\psi,\psi^*)<\infty$ and any $A\in\R^{Nn}$, we have
\begin{align*}
\-int_{\Q_\rho(z_0)}\psi\left(\left|\frac{u-(u)_{z_0,\rho}-A(x-x_0)}{\rho}\right|\right)dz \le c_2\left[\-int_{\Q_\rho(z_0)}\psi(|Du-A|)\, dz +\psi\left(\-int_{\Q_\rho(z_0)}\V'_{1+|A|}(|Du-A|)\right) \, dz \right]
\end{align*}
with $c_2$ depending on $n,N,p_0,p_1,\nu,L,$ and $\Delta_2(\psi,\psi^*)$.
\end{thm}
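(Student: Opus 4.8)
The plan is to reduce the statement to Lemma~\ref{L:PoincareLemma} by rewriting the system~\eqref{P} in the form~\eqref{equa} for the shifted function $w := u - \ell$, where $\ell(x) = (u)_{z_0,\rho} + A(x-x_0)$, together with an appropriate choice of flux $\xi$. First I would observe that $w$ satisfies, for all $\zeta\in C^\infty_c(\Q_\rho(z_0),\R^N)$,
\[
    \int_{\Q_\rho(z_0)}\bigl(w\cdot\zeta_t - (a(Du)-a(A))\cdot D\zeta\bigr)\,dz = 0,
\]
since the affine part $\ell$ contributes $\int \ell\cdot\zeta_t\,dz = 0$ (as $\ell$ is $t$-independent) and $\int a(A)\cdot D\zeta\,dz = 0$ (as $a(A)$ is constant). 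Thus Lemma~\ref{L:PoincareLemma} applies with $\psi$, $w = u-\ell$, and $\xi = a(Du)-a(A)$, giving
\[
    \-int_{\Q_\rho(z_0)}\psi\!\left(\left|\frac{u - (u)_{z_0,\rho} - A(x-x_0)}{\rho}\right|\right)dz
    \le c_1\left[\-int_{\Q_\rho(z_0)}\psi(|Du-A|)\,dz
    + \psi\!\left(\-int_{\Q_\rho(z_0)}|a(Du)-a(A)|\,dz\right)\right],
\]
using that $D(u-\ell) = Du - A$ and that the $\eta$-mean correction in Lemma~\ref{L:PoincareLemma} only replaces $(w)_{z_0,\rho}$, which combined with the triangle inequality and the $\Delta_2$-condition reabsorbs into the first term on the right (this is exactly the structure already handled inside the proof of Lemma~\ref{L:PoincareLemma}).

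The remaining task is to control the flux term $|a(Du)-a(A)|$ by $\V'_{1+|A|}(|Du-A|)$. This is precisely the content of the second consequence recorded in Remark~\ref{rem1}: the growth condition $(a_3)$ together with Assumption~\ref{assumption} yields
\[
    |a(\xi)-a(\xi_0)|\le c\,\V'_{1+|\xi_0|}(|\xi-\xi_0|),
\]
with $c = c(p_0,p_1,L)$. Applying this pointwise with $\xi = Du(z)$ and $\xi_0 = A$ gives $|a(Du)-a(A)|\le c\,\V'_{1+|A|}(|Du-A|)$, so that, since $\psi$ is increasing,
\[
    \psi\!\left(\-int_{\Q_\rho(z_0)}|a(Du)-a(A)|\,dz\right)
    \le \psi\!\left(c\-int_{\Q_\rho(z_0)}\V'_{1+|A|}(|Du-A|)\,dz\right)
    \le C\,\psi\!\left(\-int_{\Q_\rho(z_0)}\V'_{1+|A|}(|Du-A|)\,dz\right),
\]
where the last step uses the $\Delta_2$-condition of $\psi$ to absorb the multiplicative constant $c$ (with $C$ depending on $\Delta_2(\psi)$ and $p_0,p_1$). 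Combining the two displays establishes the inequality with $c_2$ depending on $n,N,p_0,p_1,\nu,L$, and $\Delta_2(\psi,\psi^*)$.

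The only genuinely delicate point is making sure Lemma~\ref{L:PoincareLemma} is literally applicable: one must verify the regularity hypotheses on $w$ and $\xi$, namely $w\in C^0(t_0-\rho^2,t_0;L^2)\cap L^1(t_0-\rho^2,t_0;W^{1,1})$ with $\psi(|Dw|)\in L^1$, and $\xi = a(Du)-a(A)\in L^1(\Q_\rho(z_0))$. The first follows since $u$ has these properties by hypothesis and $\ell$ is smooth; for $\psi(|Dw|) = \psi(|Du-A|)$ one notes that under Assumption~\ref{assumption}, $\V_{1+|A|}$ dominates a power $t^{p_0}$ for large $t$ and is $\sim t^2$ for small $t$, so $\V(|Du|)\in L^1$ plus the $\Delta_2$-bounds give $\V_{1+|A|}(|Du-A|)\in L^1$; since the statement is vacuous unless the right-hand side is finite, we may assume $\psi(|Du-A|)\in L^1$ as well (otherwise there is nothing to prove). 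The $L^1$-integrability of $\xi$ follows from $(a_1)$ and $\V'(1+|Du|)\in L^1$, which is a consequence of $\V(|Du|)\in L^1$ via $\V'(t)\sim\V(t)/t$ and $\V\in\Delta_2$. With these checks in place the argument above is complete.
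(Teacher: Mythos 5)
Your argument is correct and follows essentially the same route as the paper: subtract the affine map, observe that $w=u-\ell$ solves \eqref{equa} with $\xi=a(Du)-a(A)$, invoke Lemma~\ref{L:PoincareLemma}, and control the flux term via the second estimate in Remark~\ref{rem1}. The only cosmetic remark is that no ``mean correction'' needs to be reabsorbed at all, since $A(x-x_0)$ has zero average over $\B_\rho(x_0)$, so $(w)_{z_0,\rho}=0$ and the left-hand side of the lemma is already exactly the quantity in the statement.
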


\begin{proof}
Without loss of generality we can assume that $z_0=(0,0)$. 
Exploiting ~\eqref{P} and using the fact that $\displaystyle{\int_{\Omega_T}Ax\cdot \zeta_t\,dz=0}$ and $\displaystyle{\int_{\Omega_T}a(A)\cdot D\zeta\,dz = 0}$ for any function $\zeta\in C^\infty_c(\Omega_T , \R^N)$, we have 
$$
\int_{\Omega_T} \left[(u-Ax)\cdot\zeta_t-(a(Du)-a(A))\cdot D\zeta \right]\,dz=0.
$$
Therefore, we can apply the Lemma~\ref{L:PoincareLemma} with $w=u-Ax$ and $\xi=a(Du)-a(A)$. As $Ax$ has zero-mean on $\Q_\rho$, we obtain:
\begin{align*}
\-int_{\Q_\rho(z_0)}\psi\left(\left|\frac{u-(u)_\rho-Ax}{\rho}\right|\right)dz\le c_1\left[\-int_{\Q_\rho(z_0)}\psi(|Du-A|)dz+\psi\left(\-int_{\Q_\rho(z_0)}|a(Du)-a(A)|\,dz\right)\right].
\end{align*}
By means of Remark \ref{rem1}, we can estimate the right-hand side of the above inequality, and we get
\begin{align*}
\-int_{\Q_\rho(z_0)}\psi\left(\left|\frac{u-(u)_\rho-Ax}{\rho}\right|\right)dz\le c_2&\left[\-int_{\Q_\rho(z_0)}\psi(|Du-A|)dz+\psi\left(\-int_{\Q_\rho(z_0)}\V'_{1+|A|}(|Du-A|)\,dz\right)\right].
\end{align*}
\end{proof}

\noindent
We conclude this section by proving a weird Sobolev-Poincar\'e inequality for solutions of \eqref{P}. The proof follows the same lines as \cite[Lemma 3.4]{ho}. A special application of the inequality is required for Theorem~\ref{sp2}, which we ultimately use to establish the main regularity result in Theorem~\ref{main}.

\begin{thm}\label{sp}
Under the assumptions $(a_{1})$-$(a_{4})$ and Assumption \ref{assumption}, suppose $u\in C^0(-T,0; L^2(\Omega,\R^N))\cap L^1(-T,0; W^{1,1}(\Omega,\R^N))$ is a weak solution to \eqref{P} such that $\V(|Du|)\in L^1(-T,0;L^1(\Omega))$. Let $\Q_{\rho}(z_0)\subseteq\Omega_T$ be a standard parabolic cylinder and $\psi$ be an $N$-function satisfying Assumption \ref{assumption} (for some exponents $1<q_0\le q_1$). Then, for any $A\in\R^{Nn}$, any $\theta_0>0$ satisfying
\begin{equation}\label{E:theta0}
\theta_0q_0 \in (1,n)\qquad\text{ and }\qquad\frac{nq_1}{nq_1+2q_0}\le\theta_0\le 1,
\end{equation}
and each $\frac{\rho}{2}\le r<R\le \rho$, we have
\begin{equation}\label{E:sp}
\begin{split}
&\-int_{\Q_r(z_0)}\psi\left(\left|\frac{u-(u)_{z_0,r}-A(x-x_0)}{r}\right|\right)dz\\
&\qquad\le c_3\psi\left(T(r,R)^{1/2}\right)^{1-\theta_0}\left[\-int_{\Q_r(z_0)}\psi(|Du-A|)^{\theta_0}dz+\psi\left(\-int_{\Q_r(z_0)}\V'_{1+|A|}(|Du-A|)dz\right)^{\theta_0}\right],
\end{split}
\end{equation}
with $c_3<\infty$ depending on $n,N,p_0,p_1, q_0,q_1,\nu,L$.
Here
\begin{equation*}
T(r,R)=\-int_{\Q_R(z_0)}\left[\left|
    \frac{u-(u)_{z_0,R}-A(x-x_0)}{R-r}\right|^2
    +\V_{1+|A|}\left(\left|\frac{u-(u)_{z_0,R}-A(x-x_0)}{R-r}\right|\right)
    \right]dz.
\end{equation*}
\end{thm}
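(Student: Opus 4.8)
The plan is to adapt the proof of \cite[Lemma 3.4]{ho}; the statement is the Sobolev-improved counterpart of Theorem~\ref{poincare}. Translate so that $z_0=(0,0)$, put $\ell_0(x)=(u)_{z_0,R}+Ax$ and $w:=u-\ell_0$, so that $D_xw=Du-A$. Since $\ell_0$ is affine in $x$ and constant in $t$ and $a(A)$ is constant, $w$ is a weak solution of
\begin{equation*}
\int_{\Q_\rho(z_0)}\bigl(w\cdot\zeta_t-\xi\cdot D\zeta\bigr)\,dz=0,\qquad\zeta\in C^\infty_c(\Q_\rho(z_0);\R^N),\quad\text{with }\xi:=a(Du)-a(A),
\end{equation*}
and $|\xi|\le c\,\V'_{1+|A|}(|Du-A|)$ pointwise by Remark~\ref{rem1}. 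Since $\rho/2\le r<R\le\rho$, the radii $r,R,\rho$ are comparable up to a factor $2$, so $|\Q_R|\le 2^{n+2}|\Q_r|$, which we use freely. Applying Theorem~\ref{cacc2} with $\ell=\ell_0$ and radii $r<R$ and then dividing by $|\Q_r|$ gives the energy bound
\begin{equation*}
E:=\sup_{-r^2<s<0}\-int_{\B_r(x_0)}\left|\frac{w(\cdot,s)}{r}\right|^2dx+\-int_{\Q_r(z_0)}\V_{1+|A|}(|Du-A|)\,dz\le c\,T(r,R).
\end{equation*}

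Next I would split off the temporal oscillation of the spatial mean. Fix a smooth symmetric weight $\eta$ on $\B_r(x_0)$ as in~\eqref{asseta}, let $(w)_\eta(t)$ be the $\eta$-mean at time $t$ and $(w)_\eta:=\-int_{-r^2}^0(w)_\eta(t)\,dt$, and write, for a.e.\ $t$,
\begin{equation*}
u-(u)_{z_0,r}-A(x-x_0)=\bigl(w-(w)_\eta(t)\bigr)+\bigl((w)_\eta(t)-(w)_\eta\bigr)+\bigl((w)_\eta-(u)_{z_0,r}\bigr),
\end{equation*}
where, by Jensen's inequality and $\Delta_2(\psi)$, the last, constant, term is dominated by a multiple of $\-int_{\Q_r(z_0)}\psi(|w-(w)_\eta(t)|/r)\,dz$. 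Testing the equation with $\eta e_i$ and integrating in $t$ exactly as in the proof of Lemma~\ref{L:PoincareLemma} gives $|(w)_\eta(t)-(w)_\eta|\le c\,r\-int_{\Q_r(z_0)}|\xi|\,dz$; together with the trivial bound $|(w)_\eta(t)-(w)_\eta|\le c\,r\,E^{1/2}\le c\,r\,T(r,R)^{1/2}$, the elementary inequality $\min\{a,b\}\le a^{1-\theta_0}b^{\theta_0}$, the monotonicity and $\Delta_2$-property of $\psi$, and $|\xi|\lesssim\V'_{1+|A|}(|Du-A|)$, this yields
\begin{equation*}
\-int_{-r^2}^0\psi\!\left(\frac{|(w)_\eta(t)-(w)_\eta|}{r}\right)dt\le c\,\psi\bigl(T(r,R)^{1/2}\bigr)^{1-\theta_0}\,\psi\!\left(\-int_{\Q_r(z_0)}\V'_{1+|A|}(|Du-A|)\,dz\right)^{\theta_0},
\end{equation*}
which is precisely the second term in the bracket of~\eqref{E:sp}.

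It remains to estimate $\-int_{\Q_r(z_0)}\psi(|w-(w)_\eta(t)|/r)\,dz$, which is the crux. Set $g:=(w-(w)_\eta(t))/r$; then $g(\cdot,t)$ has vanishing $\eta$-mean over $\B_r(x_0)$ for a.e.\ $t$, $|Dg|=|Du-A|/r$, and $\sup_t\-int_{\B_r(x_0)}|g(\cdot,t)|^2dx\le c\,E\le c\,T(r,R)$. Interpolating slicewise in time between the static Orlicz Sobolev--Poincar\'e inequality for $\psi$ (which improves $\psi$ to its Sobolev conjugate; see~\cite{ho}) and this $L^2$-energy bound, then integrating in time, applying H\"older's inequality in $t$, and re-expressing the result through $\psi$ via the growth bounds~\eqref{t1} (with the exponents $q_0\le q_1$), one obtains
\begin{equation*}
\-int_{\Q_r(z_0)}\psi\!\left(\frac{|w-(w)_\eta(t)|}{r}\right)dz\le c\,\psi\bigl(T(r,R)^{1/2}\bigr)^{1-\theta_0}\-int_{\Q_r(z_0)}\psi(|Du-A|)^{\theta_0}\,dz,
\end{equation*}
the first term in the bracket of~\eqref{E:sp}. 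The hypotheses~\eqref{E:theta0} enter exactly here: $\theta_0q_0\in(1,n)$ keeps the Sobolev exponent of $\psi$ finite and positive, so that the gradient contribution — which through $\psi(\cdot)^{\theta_0}$ behaves near the origin like an $L^{\theta_0q_0}$-quantity — is subcritical, while $\theta_0\ge nq_1/(nq_1+2q_0)$ is the balance condition under which the space--time interpolation against the $L^2$-energy closes up and the H\"older step in time leaves the gradient term to the first power. Adding the three contributions and absorbing constants through $\Delta_2(\psi)$ gives~\eqref{E:sp}.

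The step I expect to be the main obstacle is this last Gagliardo--Nirenberg interpolation in the Orlicz setting: since $\psi$ need not have power growth, one cannot simply invoke the classical parabolic Sobolev--Poincar\'e inequality but must bracket $\psi$ between $t^{q_0}$ and $t^{q_1}$ on the relevant ranges, carry both regimes through the slicewise Sobolev estimate and the temporal H\"older inequality, and check that the exponents close precisely under~\eqref{E:theta0}; this bookkeeping, together with the passage from the Sobolev conjugate of $\psi$ back to the stated right-hand side, is the technical heart, and it is where the argument of~\cite[Lemma 3.4]{ho} — which itself rests on a Gagliardo--Nirenberg inequality proved there — is followed most closely.
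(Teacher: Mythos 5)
Your skeleton (Caccioppoli for the energy bound, an $\eta$-mean decomposition, the equation for the temporal oscillation, and a $\min\{a,b\}\le a^{1-\theta_0}b^{\theta_0}$ interpolation for that piece) is sound and parallels the ingredients the paper actually uses, but the proof has a genuine gap exactly at the point you yourself flag as the "technical heart": the estimate
$\-int_{\Q_r}\psi(|w-(w)_\eta(t)|/r)\,dz\lesssim \psi(T(r,R)^{1/2})^{1-\theta_0}\-int_{\Q_r}\psi(|Du-A|)^{\theta_0}dz$
is asserted, not proved, and it is \emph{not} the Gagliardo--Nirenberg inequality available in \cite[Lemma 2.13]{ho}. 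That lemma (as quoted and used in the paper) keeps a lower-order term $\psi(|f|/r)^{\theta_0}$ on the right-hand side; it does not let you trade the zero $\eta$-mean for a pure gradient term the way the classical power-case Sobolev--Poincar\'e/GN inequality does. In the Orlicz setting the standard route (compose with $\psi^{1/q_0}$ and apply Sobolev) produces an average of $\psi(|f|/r)^{1/q_0}$ that zero mean of $f$ does not kill, and your proposed fix --- "bracketing $\psi$ between $t^{q_0}$ and $t^{q_1}$ on the relevant ranges" --- does not work as stated, since \eqref{t1} only compares $\psi(st)$ to $\psi(t)$ and does not reduce $\psi$ to a single power globally; the admissibility window \eqref{E:theta0}, which mixes $q_0$ and $q_1$, is tuned precisely to the actual proof in~\cite{ho}, not to such a bracketing. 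So as written, the first bracket term of \eqref{E:sp} is not established.

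For comparison, the paper closes this step differently: it applies \cite[Lemma 2.13]{ho} slicewise with $(\psi^{1/q_0},q_0,\theta_0,\theta_0q_0,q_1/q_0,2)$, accepting the extra $\psi(|u-(u)_{z_0,r}-A(x-x_0)|/r)^{\theta_0}$ term, and then removes that term by invoking the parabolic Poincar\'e inequality of Theorem~\ref{poincare} with $\psi$ replaced by $\psi^{\theta_0}$ (legitimate because $\psi^{\theta_0}$ again satisfies Assumption~\ref{assumption} with exponents $\theta_0q_0\le\theta_0q_1$); this is also where the second bracket term $\psi\bigl(\-int_{\Q_r}\V'_{1+|A|}(|Du-A|)\,dz\bigr)^{\theta_0}$ really comes from, the sup-in-time $L^2$ factor being handled afterwards by Caccioppoli and a mean-replacement as in your energy step. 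If you want to salvage your architecture, the cleanest repair is to do the same: after your $\eta$-mean decomposition, apply the H\"ast\"o--Ok lemma with the lower-order term retained and then use Theorem~\ref{poincare} for $\psi^{\theta_0}$ to absorb it; attempting instead to prove a zero-mean Orlicz GN inequality with only the gradient on the right is essentially re-proving (a strengthening of) \cite[Lemma 2.13]{ho} and cannot be left as a remark.
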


\begin{proof}
Without loss of generality we can assume that $z_0=(0,0)$. Suppose $\theta_0>0$ satisfies~\eqref{E:theta0}. We  use the Gagliardo-Nirenberg inequality (see \cite[Lemma 2.13]{ho} with $(\psi,\gamma,\theta,p,q_1,q_2)=(\psi^{\frac{1}{q_0}},q_0,\theta_0, \theta_0q_0,\frac{q_1}{q_0},2)$) to get
\begin{equation*}
\-int_{\B_r}\psi\left(\left|\frac{f}{r}\right|\right)dx\le c \left(\-int_{\B_r}\left[\psi(|Df|)^{\theta_0}+\psi\left(\left|\frac{f}{r}\right|\right)^{\theta_0}\right]dx\right)\psi\left(\left(\-int_{\B_r}\left|\frac{f}{r}\right|^2dx\right)^{\frac{1}{2}}\right)^{1-\theta_0}.
\end{equation*}
With $f=u-(u)_r-Ax$, we apply the previous inequality to each time slice:
\begin{equation*}
\begin{split}
&\-int_{\Q_r}\psi\left(\left|\frac{u-(u)_r-Ax}{r}\right|\right)dz\\
&\le c\left[\-int_{\Q_r}\left[\psi(|Du-A|)^{\theta_0}+\psi\left(\left|\frac{u-(u)_r-Ax}{r}\right|\right)^{\theta_0}\right]dz \right]\psi\left(\left(\sup_{-{r^2}<t<0}\-int_{\B_r}\left|\frac{u-(u)_r-Ax}{r}\right|^2dx\right)^{\frac{1}{2}}\right)^{1-\theta_0}.
\end{split}
\end{equation*}
Observe that $\psi^{\theta_0}$ satisfies Assumption~\ref{assumption} with exponents $1<\theta_0q_0\le\theta_0q_1$. It follows that $\Delta_2\left(\psi^{\theta_0},\left(\psi^{\theta_0}\right)^*\right)\le\max\left\{2^{\theta_0q_1},2^\frac{\theta_0q_0}{\theta_0q_0-1}\right\}<\infty$. We may therefore use the Poincar\'e type inequality in Theorem~\ref{poincare}, with $\psi$ replaced with $\psi^{\theta_0}$, to bound the second term in the right hand side. Thus,
\begin{equation*}
\begin{split}
&\-int_{\Q_r}\psi\left(\left|\frac{u-(u)_r-Ax}{r}\right|\right)dz\\
&\le c \left[\-int_{\Q_r}\psi(|Du-A|)^{\theta_0}dz+\psi\left(\-int_{\Q_r} \V'_{1+|A|}(|Du-A|)dz\right)^{\theta_0}\right]
\psi\left(\left(\sup_{-{r^2}<t<0}\-int_{\B_r}\left|\frac{u-(u)_r-Ax}{r}\right|^2dx\right)^{\frac{1}{2}}\right)^{1-\theta_0}.
\end{split}
\end{equation*}
Now, to estimate the sup-term, we apply the Caccioppoli inequality on the cylinders $\Q_r$ and $\Q_R$ (see Theorem \ref{cacc2}). Since $R\le 2r$,
\begin{equation*}
\begin{split}
\sup_{-{r^2}<t<0}\-int_{\B_r}\left|\frac{u-(u)_r-Ax}{r}\right|^2dx
&\le \frac{c_0}{r^{n+2}}\left[\int_{\Q_R}\V_{1+|A|}\left(\left|\frac{u-(u)_r-Ax}{R-r}\right|\right)+
\left|\frac{u-(u)_r-Ax}{R-r}\right|^2dz\right]\\
&\le c_0 2^{n+2}\left[\-int_{\Q_R}\V_{1+|A|}\left(\left|\frac{u-(u)_r-Ax}{R-r}\right|\right)+
\left|\frac{u-(u)_r-Ax}{R-r}\right|^2dz\right].
\end{split}
\end{equation*}
To replace $(u)_r$ with $(u)_R$, we note that
\begin{equation*}
|(u)_r-(u)_R|=\left|\-int_{\Q_r}[u-(u)_R-Ax]\,dz\right|\le 2^{n+2}\-int_{\Q_R}|u-(u)_R-Ax|\,dz.
\end{equation*}
The result follows from Jensen's inequality and the $\Delta_2$-condition.
\end{proof}

\noindent To establish the main regularity result, Theorem~\ref{main}, we will need the following inequality, which is proved using Theorem~\ref{sp} with the special choice $t\mapsto\psi(t)=t^2$.

\begin{thm}\label{sp2}
Let $u\in C^0(-T,0; L^2(\Omega,\R^N))\cap L^1(-T,0; W^{1,1}(\Omega,\R^N))$ be a weak solution to \eqref{P} satisfying $\V(|Du|)\in L^1(-T,0;L^1(\Omega))$. Under assumptions $(a_{1})$-$(a_{4})$ and Assumption~\ref{assumption}, given any standard cylinder $\Q_{\rho}(z_0)\subseteq\Omega_T$, with center $z_0=(x_0,t_0)$, and any $A\in\R^{Nn}$, we have
\begin{equation*}
\begin{split}
    &\-int_{\Q_{\rho/2}(z_0)}
        \left|\frac{u-(u)_{\rho/2}-A(x-x_0)}{\rho/2}
        \right|^2 dz\\
    &\qquad\le
    c_4\left[\-int_{\Q_{\rho}(z_0)}\V_{1+|A|}(|Du-A|)dz
        +\left(\-int_{\Q_{\rho}(z_0)}|Du-A|^{p_0}dz
        \right)^{\frac{2}{p_0}}\right.\\
    &\qquad\qquad\qquad\left.+
    \V_{1+|A|}\left(\-int_{\Q_{\rho}(z_0)}\V'_{1+|A|}(|Du-A|)dz
        \right)+\left(\-int_{\Q_{\rho}(z_0)}\V'_{1+|A|}(|Du-A|)dz
        \right)^{2}\right],
\end{split}
\end{equation*}
where $c_4$ depends on $n,N,p_0,p_1,\nu,$ and $L$.
\end{thm}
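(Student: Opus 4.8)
The plan is to feed Theorem~\ref{sp} the special $N$-function $\psi(t)=t^2$ together with the exponent $\theta_0=p_0/2$, and then to remove, by a one-parameter iteration over the radius, the quadratic oscillation term that this choice produces. First one checks admissibility of the parameters: since $t\psi''(t)/\psi'(t)\equiv1$ for $\psi(t)=t^2$, this $\psi$ satisfies Assumption~\ref{assumption} with $q_0=q_1=2$, and $\theta_0=p_0/2$ fulfils~\eqref{E:theta0} because $p_0>\tfrac{2n}{n+2}\ge1$ and $p_0<2\le n$ force $\theta_0q_0=p_0\in(1,n)$, while $\tfrac{nq_1}{nq_1+2q_0}=\tfrac{n}{n+2}\le\tfrac{p_0}{2}=\theta_0\le1$ is exactly the lower bound on $p_0$ in Assumption~\ref{assumption}. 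With these choices $\psi(|Du-A|)^{\theta_0}=|Du-A|^{p_0}$, $\psi\big(\-int\V'_{1+|A|}(|Du-A|)dz\big)^{\theta_0}=\big(\-int\V'_{1+|A|}(|Du-A|)dz\big)^{p_0}$, and $\psi(T(r,R)^{1/2})^{1-\theta_0}=T(r,R)^{1-p_0/2}$.

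Next I set up the iteration. For $\rho/2\le r\le\rho$ put $\phi(r):=\-int_{\Q_r(z_0)}\big|\tfrac{u-(u)_{z_0,r}-A(x-x_0)}{\rho}\big|^2dz$; since $(u)_{z_0,r}$ minimises $c\mapsto\-int_{\Q_r(z_0)}|u-c-A(x-x_0)|^2dz$ and $u\in L^2(\Q_\rho(z_0))$, $\phi$ is bounded on $[\rho/2,\rho]$ with $\phi(r)\le2^{n+2}\phi(\rho)<\infty$, and the left-hand side of the assertion equals $4\phi(\rho/2)$. Fix $\rho/2\le r<R\le\rho$. Using $r\le\rho$ and Theorem~\ref{sp}, then enlarging the domain of integration from $\Q_r$ to $\Q_\rho$ (a loss of at most $2^{(n+2)p_0}$), one obtains
\[
\phi(r)\le c\,T(r,R)^{1-p_0/2}B_\rho,\qquad B_\rho:=\-int_{\Q_\rho(z_0)}|Du-A|^{p_0}dz+\Big(\-int_{\Q_\rho(z_0)}\V'_{1+|A|}(|Du-A|)dz\Big)^{p_0}.
\]
The quadratic part of $T(r,R)$ equals $\tfrac{\rho^2}{(R-r)^2}\phi(R)$; for the $\V_{1+|A|}$-part I write $\tfrac1{R-r}=\tfrac{R}{R-r}\cdot\tfrac1R$, use~\eqref{t1} for $\V_{1+|A|}$ (legitimate by Remark~\ref{phiap0p1}) to pull out $(\tfrac{R}{R-r})^{p_1}$, apply the Poincar\'e inequality of Theorem~\ref{poincare} with $\psi=\V_{1+|A|}$ on $\Q_R$, and pass to $\Q_\rho$ (using the doubling of $\V_{1+|A|}$ on the term $\V_{1+|A|}(\cdots)$). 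Since we may assume $p_1>2$, this gives $T(r,R)\le c(\tfrac{\rho}{R-r})^{p_1}(\phi(R)+G)$ with $G:=\-int_{\Q_\rho(z_0)}\V_{1+|A|}(|Du-A|)dz+\V_{1+|A|}\big(\-int_{\Q_\rho(z_0)}\V'_{1+|A|}(|Du-A|)dz\big)$.

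Combining the two estimates, using that $t\mapsto t^{1-p_0/2}$ is subadditive, then Young's inequality with a parameter chosen (depending on $r$ and $R$) so that the coefficient of $\phi(R)$ becomes $\tfrac12$, and finally $R-r\le\rho$ to subsume the weaker power attached to $G$, one arrives at
\[
\phi(r)\le\tfrac12\phi(R)+\frac{c\,\rho^{\alpha}\big(B_\rho^{2/p_0}+G\big)}{(R-r)^{\alpha}},\qquad\alpha:=\frac{p_1(2-p_0)}{p_0},
\]
for all $\rho/2\le r<R\le\rho$, with $c$ depending only on $n,N,p_0,p_1,\nu,L$. A standard iteration lemma then yields $\phi(\rho/2)\le c\big(B_\rho^{2/p_0}+G\big)$; since $B_\rho^{2/p_0}\lesssim\big(\-int_{\Q_\rho}|Du-A|^{p_0}dz\big)^{2/p_0}+\big(\-int_{\Q_\rho}\V'_{1+|A|}(|Du-A|)dz\big)^{2}$, this together with the definition of $G$ and $4\phi(\rho/2)=(\text{LHS})$ produces exactly the four terms on the right-hand side.

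I expect the delicate point to be the quadratic oscillation $\tfrac{\rho^2}{(R-r)^2}\phi(R)$ hidden inside $T(r,R)$. If one applied Theorem~\ref{sp} only once, with $R=\rho$, and estimated this term by the plain Poincar\'e inequality (Theorem~\ref{poincare} with $\psi=t^2$), one would be left with $\-int_{\Q_\rho}|Du-A|^2dz$, which in general is strictly larger than the admissible $\big(\-int_{\Q_\rho}|Du-A|^{p_0}dz\big)^{2/p_0}$ and is not controlled by the right-hand side; the Sobolev gain in Theorem~\ref{sp} improves the oscillation only at the inner radius, so iterating is essential in order to absorb $\phi(R)$. Arranging the Young inequality so that the $(\rho/(R-r))^{p_1}$ blow-up of $T(r,R)$ is converted into a clean $A/(R-r)^{\alpha}$ to which the iteration lemma applies is the one computation that will require care.
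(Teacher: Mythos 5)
Your proposal is correct and follows essentially the same route as the paper: Theorem~\ref{sp} with $\psi(t)=t^2$, $q_0=q_1=2$, $\theta_0=p_0/2$, the bound on $T(r,R)$ via \eqref{t1}, the Poincar\'e inequality of Theorem~\ref{poincare} with $\psi=\V_{1+|A|}$, a radius-dependent Young inequality to absorb the oscillation with coefficient $\tfrac12$, and the standard iteration lemma (\cite[Lemma 6.1]{Giu}). The only cosmetic difference is that you apply the $\V_{1+|A|}$-Poincar\'e inequality inside $T(r,R)$ before Young's inequality, while the paper applies Young first and Poincar\'e afterwards; both orderings yield the same estimate.
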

\begin{proof}
As usual, we may assume $z_0=(0,0)$. Let $\frac{\rho}{2}\le r<R\le \rho$. For convenience, put $M=1+|A|$. We use Theorem~\ref{sp} with the $N$-function $t\mapsto\psi(t)=t^2$, $q_0=q_1=2$, and $\theta_0=p_0/2$. Observe that, since $\frac{2n}{n+2}<p_0<2$,
\[
    \theta_0q_0=p_0\in (1,n)\qquad\text{ and }\qquad
    \frac{n}{n+2}
    =\frac{nq_1}{nq_1+2q_0}
    <\theta_0\le 1.
\]
The inequality~\eqref{E:sp} becomes
\begin{equation}\label{choicepsi}
\-int_{\Q_r}\left|\frac{u-(u)_r-Ax}{r}\right|^2 dz\\
\le c_3\left(T(r,R)\right)^{1-\frac{p_0}{2}}\left[\-int_{\Q_r}|Du-A|^{p_0}dz+\left(\-int_{\Q_r}\V'_{M}(|Du-A|)dz\right)^{p_0}\right],
\end{equation}
where
\begin{equation*}
\begin{split}
    T(r,R)
    &=
    \-int_{\Q_R}\left|\frac{u-(u)_R-Ax}{R-r}\right|^2
        +\V_{M}\left(\left|\frac{u-(u)_R-Ax}{R-r}\right|
        \right)dz\\
    &\le
    \left(\frac{R}{R-r}\right)^2\-int_{\Q_R}
        \left|\frac{u-(u)_R-Ax}{R}\right|^2dz
        +\left(\frac{R}{R-r}\right)^{p_1}
        \-int_{\Q_R}\V_{M}\left(\left|\frac{u-(u)_R-Ax}{R}
        \right|\right)dz\\
    &\le\left(\frac{R}{R-r}\right)^{p_1}\left[\-int_{\Q_R}
        \left|\frac{u-(u)_R-Ax}{R}\right|^2dz
        +\-int_{\Q_R}\V_{M}\left(\left|\frac{u-(u)_R-Ax}{R}
        \right|\right)dz\right].
\end{split}
\end{equation*}
Here, we have taken advantage of \eqref{t1}, $p_1>2$, and $\frac{R}{R-r}>1$.
Now, we use Young's inequality in \eqref{choicepsi}, with $\frac{1}{\theta_0}=\frac{2}{p_0}$ and its conjugate $\frac{1}{1-\theta_0}=\frac{2}{2-p_0}$. This produces
\begin{equation*}
\begin{split}
    \-int_{\Q_r}\left|\frac{u-(u)_r-Ax}{r}\right|^2 dz
    \le&
    \frac{1}{2}\-int_{\Q_R}
        \left|\frac{u-(u)_R-Ax}{R}\right|^2dz
        +\frac{1}{2}\-int_{\Q_R}\V_{M}
            \left(\left|\frac{u-(u)_R-Ax}{R}\right|\right)dz\\
    &\qquad+
    c \left(\frac{R}{R-r}\right)^{\frac{p_1(2-p_0)}{p_0}}
        \left[\-int_{\Q_r}|Du-A|^{p_0}dz
        +\left(\-int_{\Q_r}\V'_{M}(|Du-A|)dz\right)^{p_0}
        \right]^{\frac{2}{p_0}}.
\end{split}
\end{equation*}
For the second term in the upper bound, we can use Theorem \ref{poincare} with $\psi(t)=\V_{M}(t)$. The previous inequality  becomes
\begin{equation*}
\begin{split}
    &\-int_{\Q_r}\left|\frac{u-(u)_r-Ax}{r}\right|^2 dz\\
    &\le
    \frac{1}{2}\-int_{\Q_R}
        \left|\frac{u-(u)_R-Ax}{R}\right|^2dz
        +c\-int_{\Q_R}\V_{M}(|Du-A|)dz
        +\V_{M}\left(\-int_{\Q_R}\V'_{M}(|Du-A|)dz\right)\\
    &\quad\quad+
    c \left(\frac{R}{R-r}\right)^{\frac{p_1(2-p_0)}{p_0}}
        \left[\-int_{\Q_r}|Du-A|^{p_0}dz
    +\left(\-int_{\Q_r}\V'_{M}(|Du-A|)dz\right)^{p_0}
        \right]^{\frac{2}{p_0}}.
\end{split}
\end{equation*}
Enlarging the domain of integration (recall that $\frac{\rho}{2}\le r<R\le\rho$), we get
\begin{equation*}
\begin{split}
    &\-int_{\Q_r}\left|\frac{u-(u)_r-Ax}{r}\right|^2 dz\\
    &\le
    \frac{1}{2}\-int_{\Q_R}
        \left|\frac{u-(u)_R-Ax}{R}\right|^2dz
        +c\-int_{\Q_{\rho}}\V_{M}(|Du-A|)dz
        +\V_{M}\left(\-int_{\Q_{\rho}}\V'_{M}(|Du-A|)dz\right)\\
    &\quad\quad+
    c \left(\frac{\rho}{R-r}\right)^{\frac{p_1(2-p_0)}{p_0}}
        \left[\-int_{\Q_{\rho}}|Du-A|^{p_0}dz
        +\left(\-int_{\Q_{\rho}}\V'_{M}(|Du-A|)dz\right)^{p_0}
        \right]^{\frac{2}{p_0}}.
\end{split}
\end{equation*}
We are now in position to apply \cite[Lemma 6.1]{Giu} to conclude the proof.
\end{proof}


\section{Linearization}\label{lin}

\noindent
We now prove a lemma that facilitates the comparison of solutions to our system~\eqref{P} to solutions for a linear system with constant coefficients. 
\begin{lem}\label{linear}
Suppose $u\in C^0(-T,0; L^2(\Omega,\R^N))\cap L^1(-T,0; W^{1,1}(\Omega,\R^N))$ is a weak solution to  \eqref{P} that satisfies $\V(|Du|)\in L^1(-T,0; L^1(\Omega))$. Let a generic parabolic cylinder $\Q_{\rho,\tau}(z_0)\subset\Omega_T$, with center $z_0=(x_0,t_0)$, be given. Under the hypotheses $(a_{1})$-$(a_{4})$ and Assumption \ref{assumption}, given any affine map $\ell: \R^{n}\to \R^{N}$ and any map $\eta\in C^\infty_c(\Q_{\rho/2,\tau/4}(z_0),\R^N)$, we have 
\begin{equation*}
\left| \-int_{\Q_{\rho/2,\tau/4}(z_{0})} (u-\ell)\cdot \eta_t- Da(D\ell)( Du-D\ell, D\eta) \, dz \right| \leq c_5\V_{1+|D\ell|}(1) \left\{\omega \left(S^{\frac{1}{2}}\right)^{\frac{1}{2}}S^{\frac{1}{2}}+S\right\} \sup_{\Q_{\rho/2,\tau/4}(z_{0})}|D\eta|
\end{equation*}
where $c_5$ depends only on $n, N, L,\nu, p_0,p_1$. Here
\[
    S=\-int_{\Q_{\rho/2,\tau/4}(z_0)}
    \frac{\V_{1+|D\ell|}(|Du-D\ell|)}{\V_{1+|D\ell|}(1)}\,dz.
\]
\end{lem}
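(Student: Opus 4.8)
The plan is to start from the weak formulation of~\eqref{P}, reduce the bracketed integrand to the second-order Taylor remainder of $a$ at $D\ell$, estimate this remainder pointwise via $(a_4)$, and then split the cylinder according to whether $|Du-D\ell|$ is small or large compared to $1+|D\ell|$. For the reduction, since $\eta$ has compact support in $\Q_{\rho/2,\tau/4}(z_0)$ we may extend it by zero and use it as a test function; because $\ell$ is affine in $x$ and $\eta$ vanishes on the parabolic boundary, $\int\ell\cdot\eta_t\,dz=0$ and $\int a(D\ell)\cdot D\eta\,dz=0$, so together with $\int u\cdot\eta_t\,dz=\int a(Du)\cdot D\eta\,dz$ one gets
\begin{equation*}
\-int_{\Q_{\rho/2,\tau/4}(z_0)}(u-\ell)\cdot\eta_t-Da(D\ell)(Du-D\ell,D\eta)\,dz=\-int_{\Q_{\rho/2,\tau/4}(z_0)}\big[a(Du)-a(D\ell)-Da(D\ell)(Du-D\ell)\big]\cdot D\eta\,dz .
\end{equation*}
Hence it suffices to bound the average of $R:=\big|a(Du)-a(D\ell)-Da(D\ell)(Du-D\ell)\big|$ by $c\,\V_{1+|D\ell|}(1)\{\omega(S^{1/2})^{1/2}S^{1/2}+S\}$ and then pull $\sup|D\eta|$ out.

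For the pointwise estimate, I would write $R\le\int_0^1\big|Da(D\ell+s(Du-D\ell))-Da(D\ell)\big|\,|Du-D\ell|\,ds$, apply $(a_4)$, use the monotonicity of $\omega$ to replace its $s$-dependent argument by $\tfrac{|Du-D\ell|}{1+|D\ell|}$ and pull it outside the integral, and then use $\V'(t)\sim t\V''(t)$ from~\eqref{t3} together with Lemma~\ref{lem2} to evaluate $\int_0^1\V''\big(1+|D\ell|+|D\ell+s(Du-D\ell)|\big)\,ds\sim\V''(1+|D\ell|+|Du-D\ell|)$. Setting $M:=1+|D\ell|$ and $w:=|Du-D\ell|$ and using $\V_M(w)\sim\V''(M+w)w^2$ from~\eqref{t6}, this gives
\begin{equation*}
R\le c\,\omega\!\left(\frac{w}{M}\right)w\,\V''(M+w)\le c\,\omega\!\left(\frac{w}{M}\right)\frac{\V_M(w)}{w}.
\end{equation*}

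For the splitting, on $\{w>M\}$ one uses $\omega\le1$ and $w>M\ge1$, so the right-hand side above is $\le\V_M(w)$ and integrating contributes $\V_M(1)\,S$. On $\{w\le M\}$, \eqref{phiaquadratic} yields $\V_M(w)\sim\V_M(1)w^2$ (here $M\ge1$), so $\omega(w/M)\V_M(w)/w\sim\V_M(1)\,\omega(w/M)\,w$, and by Cauchy--Schwarz
\begin{equation*}
\-int_{\{w\le M\}}\omega\!\left(\frac{w}{M}\right)w\,dz\le\left(\-int_{\{w\le M\}}\omega\!\left(\frac{w}{M}\right)^{2}dz\right)^{1/2}\left(\-int_{\{w\le M\}}w^{2}\,dz\right)^{1/2}.
\end{equation*}
Since $w^2\sim\V_M(w)/\V_M(1)$ on this set, the second factor is $\lesssim S^{1/2}$; since $\omega\le1$, replacing $\omega^2$ by $\omega$, applying Jensen's inequality (concavity of $\omega$, $\omega(0)=0$), one more Cauchy--Schwarz, and $M\ge1$ give $\-int_{\{w\le M\}}\omega(w/M)^2\,dz\le\omega\big(\-int_{\{w\le M\}}\tfrac{w}{M}\,dz\big)\lesssim\omega(S^{1/2})$, so the first factor is $\lesssim\omega(S^{1/2})^{1/2}$. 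Adding the two regions and tracking constants yields the claim with $c_5=c_5(n,N,L,\nu,p_0,p_1)$.

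I expect the splitting step to be the main obstacle: producing exactly the structure $\omega(S^{1/2})^{1/2}S^{1/2}$ on $\{w\le M\}$ forces the Cauchy--Schwarz split of $\omega(w/M)\,w$ together with the trick $\omega^2\le\omega$ before Jensen, and the power $\tfrac12$ on $\omega$ is an artifact of this split. The passage in the pointwise step from the $s$-average of $\V''$ to $\V''(M+w)$ also needs some care, but it is precisely what Lemma~\ref{lem2} (rewritten via~\eqref{t3}) and the comparisons in~\eqref{t6} deliver.
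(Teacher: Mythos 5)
Your proposal is correct and follows essentially the same route as the paper's proof: cancel the linear terms via the weak formulation, write the Taylor remainder of $a$ at $D\ell$, estimate it with $(a_4)$ together with Lemma~\ref{lem2} and \eqref{t3}, \eqref{t6}, and then split according to $|Du-D\ell|\lessgtr 1+|D\ell|$, using Cauchy--Schwarz, $\omega^2\le\omega$, and Jensen on the small set and $\omega\le1$ on the large set. The only differences (using $\omega(w/M)$ instead of $\omega(w)$, and phrasing the small-set estimate via $\V_M(w)\sim\V_M(1)w^2$ rather than the paper's $\V'(M)/M$ bookkeeping) are cosmetic.
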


\begin{proof}

We may assume $z_0=(0,0)$. For convenience, we write $\Q_{\rho,\tau}=\Q_{\rho,\tau}(z_0)$ and $M=1+|D\ell|$.  Let $\eta\in C^\infty_c(\Q_{\rho/2,\tau/4},\R^N)$ be given. We first note:
\begin{equation*}
\begin{split}
    &\-int_{\Q_{\rho/2,\tau/4}}
        [(u-\ell)\cdot \eta_t-Da(D\ell)(Du-D\ell,D\eta)]\,dz\\
    &=
    \-int_{\Q_{\rho/2,\tau/4}}[(u-\ell)\cdot \eta_t-a(Du)\cdot D\eta]\,dz\\
    &\quad +
    \-int_{\Q_{\rho/2,\tau/4}}[(a(Du)-a(D\ell))\cdot 
        D\eta-Da(D\ell)(Du-D\ell,D\eta)]\,dz
    =:I+I\!I.
\end{split}
\end{equation*}
Since $u$ is a weak solution to \eqref{P} and $\displaystyle{\-int_{\Q_{\rho/2,\tau/4}}\ell\cdot \eta_t\,dz=0}$, we infer that $I=0$. On the other hand,
\begin{equation*}
\begin{split}
\-int_{\Q_{\rho/2,\tau/4}}(a(Du)-a(D\ell))\cdot D\eta\,dz&=\-int_{\Q_{\rho/2,\tau/4}}\int_0^1\frac{d}{ds}
    \left[a(D\ell+s(Du-D\ell))\right]\cdot D\eta\,ds\,dz\\
&=\-int_{\Q_{\rho/2,\tau/4}}\int_0^1Da(D\ell+s(Du-D\ell))(Du-D\ell,D\eta)\,ds\,dz,
\end{split}
\end{equation*}
so that 
$$
I\!I=\-int_{\Q_{\rho/2,\tau/4}}\int_0^1[Da(D\ell+s(Du-D\ell))-Da(D\ell)](Du-D\ell,D\eta)\,ds\,dz.
$$
Using the continuity assumption $(a_4)$,
\begin{equation*}
\begin{split}
|I\!I|
    &\le
    L\-int_{\Q_{\rho/2,\tau/4}}\int_0^1
        \omega\left(\frac{s|Du-D\ell|}{1+|D\ell+s(Du-D\ell)|+|D\ell|}\right)
        \V''(M+|D\ell+s(Du-D\ell)|)|Du-D\ell||D\eta|\,ds\,dz\\
    &\le
    L\-int_{\Q_{\rho/2,\tau/4}}
        \omega\left(|Du-D\ell|\right)
        |Du-D\ell||D\eta|\int_0^1\V''(M+|D\ell+s(Du-D\ell)|)\,ds\,dz.
\end{split}
\end{equation*}
As $\V'$ is nondecreasing, Assumption \ref{assumption}, Lemma \ref{lem2}, and~\eqref{t2} yield
$$
\int_0^1\V''(M+|D\ell+s(Du-D\ell)|)\,ds
\le c\,\frac{\V'(M+|Du|+|D\ell|)}{M+|Du|+|D\ell|}
\le c\,\frac{\V'(M+|Du|)}{M+|Du|}.
$$
Thus,
$$
|I\!I|\le c\sup_{\Q_{\rho/2,\tau/4}}\!\!\!\!|D\eta|\,\-int_{\Q_{\rho/2,\tau/4}}
\omega\left(|Du-D\ell|\right)|Du-D\ell|\,
    \frac{\V'(M+|Du|)}{M+|Du|}\,dz.
$$
Now, we distinguish in $\Q_{\rho/2,\tau/4}$ the points where $|Du-D\ell|\le M$ from those where $|Du-D\ell|>M$. Denote by $\mathbb{X}$ the first set and by $\mathbb{Y}$ the second. On $\mathbb{X}$, we have $|Du|\le2M$, so~\eqref{t2} implies $\V'(M+|Du|)\sim\V'(M)$. Thus
\begin{align*}
|Du-D\ell|\frac{\V'(M+|Du|)}{M+|Du|} & \leq c\left(\frac{\V'(M)}{M}\right)^\frac{1}{2} \left(|Du-D\ell|^2\frac{\V'(M+|Du-D\ell|)}{M+|Du-D\ell|}\right)^\frac{1}{2} \\
&\leq c\left(\frac{\V'(M)}{M}\right)^\frac{1}{2}\V_M(|Du-D\ell|)^\frac{1}{2},
\end{align*}
where we have used~\eqref{t6}. Moreover,
\begin{align*}
|Du-D\ell| &\leq c\frac{M+|Du|}{\V'(M+|Du|)}\left(\frac{\V'(M)}{M}\right)^\frac{1}{2}\V_M(|Du-D\ell|)^\frac{1}{2}\\
&\leq c\left(\frac{M}{\V'(M)}\right)^\frac{1}{2}\V_M(|Du-D\ell|)^\frac{1}{2}.
\end{align*}
It follows that
\begin{align*}
    &\-int_{\Q_{\rho/2,\tau/4}}\!\!\!\!\!\!\chi_{\mathbb X}
    \omega\left(|Du-D\ell|\right)|Du-D\ell|\,
    \frac{\V'(M+|Du|)}{M+|Du|}\,dz\\
    &\qquad\le
    c\left(\frac{\V'(M)}{M}\right)^\frac{1}{2}\-int_{\Q_{\rho/2,\tau/4}}
    \omega\left(\left(\frac{M}{\V'(M)}\right)^\frac{1}{2}
        \V_M(|Du-D\ell|)^\frac{1}{2}\right)
    \V_M(|Du-D\ell|)^\frac{1}{2}\,dz. 
\end{align*}
Applying H\"{o}lder's inequality and using the concavity of $\omega$ and the bound $\omega\le1$, we continue with
\begin{align*}
&\-int_{\Q_{\rho/2,\tau/4}}\chi_{\mathbb X} \omega\left(|Du-D\ell|\right)|Du-D\ell|\,\frac{\V'(M+|Du|)}{M+|Du|}\,dz\\
&\leq c\left(\frac{\V'(M)}{M}\right)^\frac{1}{2} \left(\-int_{\Q_{\rho/2,\tau/4}} \omega\left(\left(\frac{M}{\V'(M)}\right)^\frac{1}{2}  \V_M(|Du-D\ell|)^\frac{1}{2}\right)^2\,dz\right)^\frac{1}{2} \left(\-int_{\Q_{\rho/2,\tau/4}}\V_M(|Du-D\ell|)\,dz\right)^\frac{1}{2}\\
&\leq c\left(\frac{\V'(M)}{M}\right)^\frac{1}{2} \omega\left(\left(\frac{M}{\V'(M)}\right)^\frac{1}{2} \left( \-int_{\Q_{\rho/2,\tau/4}}\V_M(|Du-D\ell|)\,dz\right)^\frac{1}{2}\right)^{\frac{1}{2}}\left(\-int_{\Q_{\rho/2,\tau/4}}\V_M(|Du-D\ell|)\,dz\right)^\frac{1}{2}\\
&\leq c\left(\frac{\V'(M)}{M}\right)^\frac{1}{2} \omega\left(\left(\frac{M}{\V'(M)}\right)^\frac{1}{2}S^\frac{1}{2}\V_M(1)^{\frac{1}{2}}\right)^{\frac{1}{2}} S^\frac{1}{2}\V_M(1)^{\frac{1}{2}}.
\end{align*}
To complete the bound on $\mathbb{X}$, we use~\eqref{t2} and~\eqref{t6} to deduce that
\[
    \frac{\V'(M)}{M}\sim\frac{\V'(M+1)}{M+1}\sim\V_M(1).
\]
We conclude that
\begin{equation}\label{E:Linear1}
    \-int_{\Q_{\rho/2,\tau/4}}\!\!\!\!\!\!\chi_{\mathbb X}
    \omega\left(|Du-D\ell|\right)|Du-D\ell|\,
    \frac{\V'(M+|Du|)}{M+|Du|}\,dz
    \le
    c\V_M(1)\omega\left(S^\frac{1}{2}\right)^\frac{1}{2}
        S^\frac{1}{2}.
\end{equation}
Turning to the set $\mathbb{Y}$, we have $1\le M\le|Du-D\ell|\le|Du-D\ell|^2$. Recalling~\eqref{t2} and~\eqref{t6} once more, we have
\[
    |Du-D\ell|\frac{\V'(M+|Du|)}{M+|Du|}
    \le
    c|Du-D\ell|^2\frac{\V'(M+|Du-D\ell|)}{M+|Du-D\ell|}
    \le
    c\V_M(|Du-D\ell|).
\]
Thus
\begin{equation}\label{E:Linear2}
    \-int_{\Q_{\rho/2,\tau/4}}\!\!\!\!\!\!\chi_{\mathbb Y}
    \omega\left(|Du-D\ell|\right)|Du-D\ell|\,
    \frac{\V'(M+|Du|)}{M+|Du|}\,dz
    \le
    c\-int_{\Q_{\rho/2,\tau/4}}\V_M(|Du-D\ell|)\,dz
    =c\V_M(1)S.
\end{equation}
Here, we have again used $\omega\le1$. The lemma follows from summing~\eqref{E:Linear1} and~\eqref{E:Linear2}. To verify the claim for the parameter dependencies of the constant $c_5$, we review the proof and note that only the hypothesis $(a_4)$, Assumption~\ref{assumption}, and properties~\eqref{t2} and~\eqref{t6} were required. 
\end{proof}

\section{Decay Estimates}


\noindent
For convenience, we recall the excess functional introduced in Section~\ref{S:Intro}. Given $z_0\in\Omega_T$, $a\ge0$, $r>0$, and an affine map $\ell:\R^{n}\to\R^{N}$, define
\begin{equation*}
\Psi_a(z_0,r,\ell)=\-int_{\Q_r(z_0)}\left(\left|\frac{u-\ell}{r}\right|^2+\V_a\left(\left|\frac{u-\ell}{r}\right|\right)\right)\, d z.
\end{equation*}

\noindent In the following lemma we provide the decay of the excess $\Psi_{1+|D\ell_{z_0,\rho}|}(z_0,\rho,\ell_{z_0,\rho})$. Recall that $\ell_{z_0,\rho}$ is defined in~\eqref{E:AffineMin} and denotes the time-independent affine map closest to $u$ with respect to the $L^2$-norm on $\Q_\rho(z_0)$.

\begin{lem}[Decay Estimate]\label{L:DecEst}
Suppose that hypotheses $(a_{1})$-$(a_{4})$ and Assumption \ref{assumption} hold and that $u\in C^0(-T,0; L^2(\Omega,\R^N))\cap L^1(-T,0; W^{1,1}(\Omega,\R^N))$ is a weak solution to  \eqref{P} satisfying $\V(|Du|)\in L^1(-T,0; L^1(\Omega))$. Let $M_0>0$ and $0<\alpha<1$ be given. There exist $0<\e,\theta<1$ with the following property: if $z_0\in\Omega_T$ and $\rho>0$ are such that $\Q_\rho(z_0)\subseteq\Omega_T$,
\begin{equation*}
|D\ell_{z_0,\rho}|\le M_0,
\qquad\text{ and }\qquad\Psi_{1+|D\ell_{z_0,\rho}|}(z_0,\rho,\ell_{z_0,\rho})\le\e,
\end{equation*}
then
\begin{equation*}
\Psi_{1+|D\ell_{z_0,\theta\rho}|}(z_0,\theta\rho,\ell_{z_0,\theta\rho}) \le \theta^{2\alpha}\Psi_{1+|D\ell_{z_0,\rho}|}(z_0,\rho,\ell_{z_0,\rho})
\end{equation*}
and
\begin{equation*}
|D\ell_{z_0,\theta\rho}| \le |D\ell_{z_0,\rho}| +(n+2)\left(\frac{1}{\theta}\right)^{n+3}\Psi_{1+|D\ell_\rho|}(z_0,\rho,\ell_{z_0,\rho})^\frac{1}{2}.
\end{equation*}
\end{lem}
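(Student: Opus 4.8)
The plan is to run the standard blow-up/$\mathcal{A}$-caloric comparison scheme, now powered by the Orlicz $\mathcal{A}$-caloric approximation lemma (Theorem~\ref{T:ApproxLemma}). Fix $M_0$ and $\alpha$. Set $M=1+M_0$ and write $a_0=1+|D\ell_{z_0,\rho}|\in[1,M]$, $\ell_0=\ell_{z_0,\rho}$, $\Psi_0=\Psi_{a_0}(z_0,\rho,\ell_0)$. The bilinear form governing the linearization is $\mathcal{A}=Da(D\ell_0)$, which by $(a_2)$, $(a_3)$, Assumption~\ref{assumption}, and $\V'\sim t\V''$ satisfies $\mathcal{A}(\xi,\xi)\ge\tilde\nu\,\V''(a_0)|\xi|^2$ and $|\mathcal{A}(\xi,\eta)|\le\tilde L\,\V''(a_0)|\xi||\eta|$ with $\tilde\nu,\tilde L$ depending only on $p_0,p_1,\nu,L$; after dividing through by $\V''(a_0)$ the ellipticity ratio $\tilde L/\tilde\nu$ is controlled independently of $a_0$.

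\emph{Step 1: rescaling and normalization.} Let $\gamma^2\sim\Psi_0$ (more precisely $\gamma=c\sqrt{\Psi_0}$ with $c$ chosen so $\gamma\le1/\omega_n$; this is where $\e$ must be taken small) and set $v=\gamma^{-1}(u-\ell_0)$ on $\Q_\rho(z_0)$, rescaled to $\Q_1$. Using Corollary~\ref{cacc3} (the Caccioppoli inequality) together with Lemma~\ref{L:OrliczPower} applied to $\V_{a_0}$, one checks that $v$ satisfies the energy bound~\eqref{E:NormBnd} of Theorem~\ref{T:ApproxLemma} (the quadratic and $\V$-parts of $|Dv|$ are both dominated by a constant multiple of $\Psi_0/\gamma^2$, hence by a constant, after possibly enlarging $\gamma$; the sup-term comes from the Caccioppoli sup-bound). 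The approximate $\mathcal{A}$-caloricity~\eqref{E:ACaloricBnd} is exactly the content of the linearization Lemma~\ref{linear}: testing~\eqref{P} against $\gamma^{-1}\eta$ and dividing by $\V''(a_0)\sim\V_{a_0}(1)$ gives an error of size $c_5\{\omega(S^{1/2})^{1/2}S^{1/2}+S\}$ with $S\le c\,\Psi_0/\V_{a_0}(1)$, which is $\le\delta_0$ once $\e$ is small. So all hypotheses of Theorem~\ref{T:ApproxLemma} hold, with $\V$ there replaced by $\V_{a_0}$ (which satisfies (H1), (H2) uniformly in $a_0$ by Remark~\ref{phiap0p1} and the remark following (H2)).

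\emph{Step 2: comparison and iteration.} Theorem~\ref{T:ApproxLemma} produces an $\mathcal{A}$-caloric $h$ on $\Q_{1/2}$ with $\-int_{\Q_{1/2}}(|h|^2+\V_{a_0}(|h|)+\V_{a_0}(|Dh|))\lesssim\gamma^2$ and $\-int_{\Q_{1/2}}(|v-\gamma h|^2+\V_{a_0}(|v-\gamma h|))\le\e\gamma^2$. Now apply the $\mathcal{A}$-caloric excess estimate Lemma~\ref{L:CaloricDecay} (with $\psi(t)=t^2$ and, after a Hölder/Remark~\ref{phiap0p1} argument as in the commented-out Corollary~\ref{C:CaloricDecay}, with $\psi=\V_{a_0}$) to the $\mathcal{A}$-caloric map $h$ on $\Q_{1/2}$: for $\theta\in(0,1/8)$ this gives $\-int_{\Q_{\theta}(z_0)}(|\gamma(h-\ell^{(h)}_\theta)/\theta|^2+\V_{a_0}(\,\cdot\,))\lesssim(\theta^2+\theta^{p_1})\gamma^2\lesssim\theta^2\gamma^2$. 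Combining with the comparison bound, the quasi-minimality of affine maps (Lemmas~\ref{quasimin},~\ref{quasimin2}) to pass from $\ell_0+\theta\gamma\ell^{(h)}_\theta$ to $\ell_{z_0,\theta\rho}$, and undoing the scaling yields
\[
\Psi_{a_0}(z_0,\theta\rho,\ell_{z_0,\theta\rho})
\le C_*\bigl(\theta^2+\theta^{-(n+4)}\e\bigr)\Psi_0,
\]
with $C_*$ depending only on $n,N,L/\nu,p_0,p_1$. Since $\alpha<1$, first choose $\theta$ so small that $2C_*\theta^2\le\tfrac12\theta^{2\alpha}$, then $\e$ so small that $C_*\theta^{-(n+4)}\e\le\tfrac12\theta^{2\alpha}$; this gives $\Psi_{a_0}(z_0,\theta\rho,\ell_{z_0,\theta\rho})\le\theta^{2\alpha}\Psi_0$. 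Finally one must replace the shift $a_0=1+|D\ell_{z_0,\rho}|$ by $1+|D\ell_{z_0,\theta\rho}|$: the second assertion of the lemma (proved immediately, exactly as in the commented-out ``Closeness of Affine Maps'': $|D\ell_{z_0,\theta\rho}-D\ell_{z_0,\rho}|\le(n+2)\theta^{-(n+3)}\Psi_0^{1/2}$ via~\eqref{gradaffine} and the $L^2$-minimality) shows both shifts lie in $[1,M']$ for a controlled $M'$ once $\e$ is small, so Lemma~\ref{L:ShiftComp} converts $\Psi_{a_0}$ to $\Psi_{1+|D\ell_{z_0,\theta\rho}|}$ at the cost of a constant that can be absorbed by further shrinking $\e$ (or, more cleanly, by slightly decreasing $\alpha$ to $\alpha'$ in Step 2 and using $\theta^{2\alpha'}\cdot(\text{const})\le\theta^{2\alpha}$).

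\emph{Main obstacle.} The delicate point is the uniformity in the shift parameter $a_0\in[1,M]$: all constants from the approximation lemma ($\delta_0$, $K_{\V_{a_0}}$), from the Caccioppoli and Orlicz-power estimates, and from the caloric excess estimate must be controlled independently of $a_0$ — this is precisely what Remark~\ref{phiap0p1}, Remark~\ref{R:ApproxLemma}, Lemma~\ref{phia}, and Lemma~\ref{L:ShiftComp} are set up to provide, but assembling them so that a single choice of $\e,\theta$ works for every admissible $z_0,\rho$ requires care. The second mildly technical point is verifying~\eqref{E:NormBnd} for the rescaled $v$: one needs the Caccioppoli inequality to bootstrap the raw smallness $\Psi_0\le\e$ into control of $\-int\V_{a_0}(|Dv|)$ and the sup-term, which forces the choice $\gamma\sim\sqrt{\Psi_0}$ rather than $\gamma=\sqrt{\Psi_0}$ and a corresponding room in $\e$.
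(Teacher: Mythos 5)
Your proposal follows essentially the same route as the paper: Caccioppoli (Corollary~\ref{cacc3}) plus the linearization Lemma~\ref{linear} to verify the hypotheses of the Orlicz $\A$-caloric approximation (Theorem~\ref{T:ApproxLemma}) with shift $\V_{a_0}$ and $\gamma^2\sim\Psi_0$, then the caloric excess estimate (Lemma~\ref{L:CaloricDecay} upgraded to $\V_{a_0}$ via Lemma~\ref{L:OrliczPower} and \eqref{phiaquadratic2}), the quasi-minimality Lemma~\ref{quasimin}, the bound $|D\ell_{z_0,\theta\rho}-D\ell_{z_0,\rho}|\le(n+2)\theta^{-(n+3)}\Psi_0^{1/2}$ from \eqref{touse}, and the shift change Lemma~\ref{L:ShiftComp}, finishing by choosing $\theta$ first and then $\e$.

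One detail needs fixing: the shift-change constant from Lemma~\ref{L:ShiftComp} (and the quasi-minimality constant $\kappa_0$) cannot be ``absorbed by further shrinking $\e$,'' since it multiplies the $\theta^2$ term of your bound $C_*(\theta^2+\theta^{-(n+p_1+2)}\e)\Psi_0$ as well; and your alternative of ``decreasing $\alpha$ to $\alpha'$'' has the direction reversed (you would need $\alpha'\in(\alpha,1)$ for $C\theta^{2\alpha'}\le\theta^{2\alpha}$ to be achievable for small $\theta$). The clean fix — and what the paper does — is to fold these constants into the coefficient of $\theta^2$ \emph{before} choosing $\theta$ (they depend only on $n,N,L/\nu,p_0,p_1,M_0$), and then use the gap $2>2\alpha$ in \eqref{E:Theta} to get $(1+c_{6,6})\theta^2\le\theta^{2\alpha}$; with that ordering your argument goes through as in the paper. (Also, your exponent $\theta^{-(n+4)}$ on the $\e$-term should be $\theta^{-(n+p_1+2)}$ to account for the $\V_M$-part of the excess, but this is harmless since $\e$ is chosen after $\theta$.)
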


\begin{proof}
For each $0<r\le\rho$, we write $\ell_r=\ell_{z_0,r}$ and $\Psi_a(r)=\Psi_a(z_0,r,\ell_r)$. Define $v=u-\ell_\rho$ and $M=1+|D\ell_\rho|\le M_0+1$. Let $\A$ denote the bilinear form $Da(D\ell_{\rho})$. Thus
\begin{equation*}
    \A(\xi,\xi)\ge\nu\V''(1+|D\ell_\rho|)|\xi|^2
    \quad \mbox{ and } \quad
    |\A(\xi,\eta)|\le L\V''(1+|D\ell_\rho|)|\xi||\eta|.
\end{equation*}

\noindent
Let us recall that  $\Delta_2(\V_M)\le 2^{p_1}$; with $0<\alpha<1$ given, define 
\begin{equation}\label{E:Theta}
    0<\theta=\theta(M,\alpha)
    =\min\left\{
        \frac{1}{32},
        \left(\frac{1}{1+c_{6,6}}\right)^\frac{1}{2(1-\alpha)}
    \right\},
\end{equation}
and suppose that
\begin{equation}\label{E:Small}
    \Psi_M(\rho)\le\e=\e(M)
    \le\min\left\{\frac{\V_M(1)}{c_{6,1}},\frac{\delta^2}{c_{6,2}^2},
        \frac{1}{\omega_nc_{6,3}c_{6,4}},\left(\frac{\theta^{n+3}}{n+2}\right)^2,
        \frac{(4\theta)^{n+p_1+4}}{c_{6,3}c_{6,5}\left(2+\Delta_2(\V_M)^3\right)}\right\},
\end{equation}
where the precise values of the constants $c_{6,i}>1$ will be determined in the course of the proof.
The constant $0<\delta<1$ is specified by Theorem~\ref{T:ApproxLemma}, while the constant $C$ appearing below may change from line to line but will depend on only $n,N,L/\nu,p_0,p_1$.

\noindent
As $\ell_\rho$ is independent of $t$, the map $v$ is a weak solution to~\eqref{P}. Our first objective is to take advantage of the $\A$-caloric approximation lemma to produce an $\A$-caloric map close to $v$. With $S$ defined in Lemma~\ref{linear}, the Caccioppoli inequality in Corollary~\ref{cacc3} implies
\[
    S=\-int_{Q_{\rho/2}}\frac{\V_M(|Du-D\ell_\rho|)}{\V_M(1)}\,dz
    \le
    \frac{c_02^{n+p_1+2}}{\V_M(1)}
    \-int_{Q_\rho}\left[\left|\frac{v}{\rho}\right|^2
        +\V_M\left(\left|\frac{v}{\rho}\right|\right)\right]\,dz
    \le
    \frac{c_{6,1}}{\V_M(1)}\Psi_M(\rho)\le 1.
\]
Note that $c_{6,1}=c_02^{n+p_1+2}$. Now, Lemma~\ref{linear} delivers the bound
\begin{align*}
    \left|\-int_{\Q_{\rho/2}}
        \left( v\cdot\partial_t\eta-\A(Dv,D\eta)\right)\, dz\right|
    &\le
    c_5\V_M(1)\left\{\omega\left(S^\frac{1}{2}\right)^{\frac{1}{2}}S^\frac{1}{2}+S\right\}
        \sup_{Q_{\rho/2}}|D\eta|\\
    &\le
    2c_5c_{6,1}^\frac{1}{2}\V_M(1)^\frac{1}{2}\Psi_M(\rho)^\frac{1}{2}\sup_{Q_{\rho/2}}|D\eta|\\
    &=c_{6,2}\Psi_M(\rho)^\frac{1}{2}\sup_{Q_{\rho/2}}|D\eta|\\
    &\le 
    \delta\sup_{Q_{\rho/2}}|D\eta|.
\end{align*}
The smallness condition~\eqref{E:Small} was applied in the last inequality. This verifies the requirement in~\eqref{E:ACaloricBnd} of Theorem~\ref{T:ApproxLemma}. For the other requirement in~\eqref{E:NormBnd}, we again use the Caccioppoli inequality:
\begin{align*}
     &\sup_{t_0-\rho^2/4<t<t_0}\-int_{\B_{\rho/2}}
        \left|\frac{v}{\rho/2}\right|^2\,dx
    +\-int_{\Q_{\rho/2}}
        \left(\V_M\left(\left|\frac{v}{\rho/2}\right|\right)
        +\V_{M}(|Dv|)\right)\,dz\\
    &\qquad\qquad\qquad\qquad\le
    4c_{6,1}\Psi_M(\rho)+\Delta_2(\V_M)\-int_{\Q_{\rho/2}}
        \V_M\left(\left|\frac{v}{\rho}\right|\right)\,dz\\
    &\qquad\qquad\qquad\qquad\le c_02^{n+p_1+5}\Psi_M(\rho)=
    c_{6,3}\Psi_M(\rho)=:\gamma^2\le1/\omega_n\le 1.
\end{align*}

\noindent
With the hypotheses of Theorem~\ref{T:ApproxLemma} satisfied, taking into account Remarks~\ref{phiap0p1} and~\ref{R:ApproxLemma}, we obtain an $\A$-caloric map $h\in C^\infty(\Q_{\rho/4};\R^{N})$ such that
\begin{equation}\label{E:OrliczApp1}
    \-int_{\Q_{\rho/4}}\left(\left|\frac{\gamma h}{\rho/4}\right|^2
    +\V_{M}\left(\left|\frac{\gamma h}{\rho/4}\right|\right)
    +\V_{M}(|\gamma Dh|)\right)\, d z
    \le
    2^{n+2}K_{\V_M}\gamma^2
\end{equation}
and
\begin{equation*}
    \-int_{\Q_{\rho/4}}\left(\left|\frac{v-\gamma h}{\rho/4}\right|^2
    +\V_{M}\left(\left|\frac{v-\gamma h}{\rho/4}\right|\right)\right)\, d z
    \le\e\gamma^2.
\end{equation*}

\noindent
Recall that $0<\theta<1/16$ by definition~\eqref{E:Theta}. As in Lemma~\ref{L:CaloricDecay}, for $0<r\le\rho/4$, we define the affine map $\ell^{(h)}_{r}(x)=(h)_{z_0,r}+(Dh)_{z_0,r}(x-x_0)$. We want to produce a bound for
\begin{equation*}
    \-int_{\Q_{\theta\rho}}\left|
        \frac{v-\gamma\ell^{(h)}_{\theta\rho}}{\theta\rho}\right|^2\, d z
    +\-int_{\Q_{\theta\rho}}\V_{M}\left(
        \left|\frac{v-\gamma\ell^{(h)}_{\theta\rho}}{\theta\rho}\right|\right)\, d z
    =:I+I\!I.
\end{equation*}
We will focus on $I\!I$. The argument for $I$ is similar. We may write
\begin{align}
\nonumber
    I\!I
    &\le
    \Delta_2(\V_M)\-int_{\Q_{\theta\rho}}\V_M\left(\left|\frac{v-\gamma h}{\theta\rho}\right|\right)
        +\V_{M}\left(\left|\frac{\gamma(h-\ell^{(h)}_{z_0,\theta\rho})}{\theta\rho}\right|\right)\, d z\\
\nonumber
    &\le
    \Delta_2(\V_M)\left(\frac{1}{4\theta}\right)^{p_1+n+2}
    \-int_{\Q_{\rho/4}}\V_{M}\left(
        \left|\frac{v-\gamma h}{\rho/4}\right|\right)\, d z
        +\Delta_2(\V_M)\-int_{\Q_{\theta\rho}}\V_{M}\left(
        \left|\frac{\gamma (h-\ell^{(h)}_{z_0,\theta\rho})}{\theta\rho}\right|\right)\, d z\\
\label{E:IIExcessBnd1}
    &\le
    \Delta_2(\V_M)\left(\frac{1}{4\theta}\right)^{p_1+n+2}\e\gamma^2
    +\Delta_2(\V_M)\-int_{\Q_{\theta\rho}}\V_{M}\left(
        \left|\frac{\gamma (h-\ell^{(h)}_{z_0,\theta\rho})}{\theta\rho}\right|\right)\, d z.
\end{align}
Observe that Lemma~\ref{L:OrliczPower}-$(b)$ and~\eqref{E:OrliczApp1} imply
\begin{align*}
    \-int_{\Q_{\rho/4}}
        \left|\frac{\gamma(h-\ell^{(h)}_{z_0,\rho/4})}{\rho/4}\right|^2\, d z
    \le&
    6\-int_{\Q_{\rho/4}}\left|\frac{\gamma h}{\rho/4}\right|^2\, d z
    +3\left(\-int_{\Q_{\rho/4}}|\gamma Dh|\right)^2\, d z\\
    \le &
    2^{n+5}K_{\V_M}\gamma^2
    +\frac{C}{\V''(M)}\-int_{\Q_{\rho/4}}
        \V_{M}(|\gamma Dh|)\, d z+\frac{C}{\V_{M}(1)^2}\left(\-int_{\Q_{\rho/4}}
        \V_{M}(|\gamma Dh|)\, d z\right)^2\\
    \le &
    2^{n+5}K_{\V_M}\gamma^2+C\left(\frac{1}{\V''(M)}
        +\frac{2^{n+2}K_{\V_M}\gamma^2}{\V_M(1)^2}\right)2^{n+2}K_{\V_M}\gamma^2\\
    \le&2^{2n+5}K^2_{\V_M}\left[1+C\left(\frac{1}{\V''(M)}
        +\frac{1}{\V_M(1)^2}\right)\right]\gamma^2=c_{6,4}\gamma^2\le 1.
\end{align*}
We may therefore use the inequality in Lemma~\ref{L:CaloricDecay}, Remark \ref{phiap0p1}, (\ref{phiaquadratic2}), and Jensen's inequality to deduce that
\begin{equation}\label{E:hPhiExcess2}
\-int_{\Q_{\theta\rho}}
    \V_{M}\left(\left|\frac{\gamma(h-\ell^{(h)}_{z_0,\theta\rho})}{\theta\rho}\right|
        \right)\, d z
    \le C\theta^{2}\-int_{Q_{\rho/4}}
    \V_{M}
        \left(\left|\frac{\gamma(h-\ell^{(h)}_{z_0,\rho/4})}{\rho/4}\right|\right)\, d z.
\end{equation}
On the other hand Jensen's inequality and the $\Delta_2$-property imply
\begin{align}
\nonumber
    \-int_{\Q_{\rho/4}}\V_{M}\left(\left|
        \frac{\gamma(h-\ell^{(h)}_{z_0,\rho/4})}{\rho/4}\right|\right)\, d z
    \le&
    \Delta_2(\V_M)^2\-int_{\Q_{\rho/4}}\left(
        \V_{M}\left(\left|\frac{\gamma h}{\rho/4}\right|\right)
        +\V_{M}(|\gamma Dh|)\right)\, d z\\
\label{E:hPhiExcess1}
    \le&
    2^{n+2}K_{\V_M}\Delta_2(\V_M)^2\gamma^2.
\end{align}
With (\ref{E:hPhiExcess2}) and (\ref{E:hPhiExcess1}) we return to~\eqref{E:IIExcessBnd1} to obtain
\begin{equation*}
    \-int_{\Q_{\theta\rho}}\V_M\left(\left|
        \frac{v-\gamma\ell^{(h)}_{\theta\rho}}{\theta\rho}\right|\right)\,dz
    =II
    \le
    \Delta_2(\V_M)^3\left[\left(\frac{1}{4\theta}\right)^{p_1+n+2}\e
        +C2^{n+2}K_{\V_M}\theta^2\right]\gamma^2.
\end{equation*}
We similarly obtain
\begin{equation*}
    \-int_{\Q_{\theta\rho}}\left|
        \frac{v-\gamma\ell^{(h)}_{\theta\rho}}{\theta\rho}\right|^2\,dz
    =I
    \le
    \left[2\left(\frac{1}{4\theta}\right)^{n+4}\e
        +Cc_{6,4}\theta^2\right]\gamma^2.
\end{equation*}
Thus, since $\theta\le1/4$,
\begin{align*}
    & \-int_{\Q_{\theta\rho}}\left|
        \frac{v-\gamma\ell^{(h)}_{z_0,\theta\rho}}{\theta\rho}\right|^2
    +\V_{M}\left(
        \left|\frac{v-\gamma\ell^{(h)}_{z_0,\theta\rho}}{\theta\rho}
        \right|\right)\, d z \le
    \left[\left(\frac{1}{4\theta}\right)^{n+p_1+2}
        \left(2+\Delta_2(\V_M)^3\right)\e
    +Cc_{6,4}\left(1+\Delta_2(\V_M)^3\right)\theta^2\right]\gamma^2.
\end{align*}
Let us point out that $1+|D\ell_{\theta\rho}|\le M+1$. Indeed, from \eqref{touse} we obtain
\begin{equation}\label{ellthetarho}
\begin{split}
    |D\ell_{\theta\rho}|
    &\le
    |D\ell_\rho|+|D\ell_{\rho}- D\ell_{\theta\rho}|\\
    &\le
    M-1+(n+2) \-int_{\Q_{\theta\rho}}
        \left|\frac{u- \ell_{\rho}}{\theta\rho}\right|\, dz \leq M-1+\left(\frac{n+2}{\theta^{n+3}}\right)
        \-int_{\Q_{\rho}} \left|\frac{u- \ell_{\rho}}{\rho}\right|\, dz\\
    &\le
    M-1+\left(\frac{n+2}{\theta^{n+3}}\right)\left(\-int_{\Q_{\rho}}
        \left|\frac{u- \ell_{\rho}}{\rho}\right|^2\, dz\right)^{\frac{1}{2}}\\
    &\le
    M-1+\left(\frac{n+2}{\theta^{n+3}}\right)
        \Psi_{1+|D\ell_{\rho}|}(\rho)^{\frac{1}{2}}\\
    &\le
    M-1+\left(\frac{n+2}{\theta^{n+3}}\right)\varepsilon^{\frac{1}{2}}
    \le M
\end{split}
\end{equation}
provided $\displaystyle{\varepsilon\le \left(\frac{\theta^{n+3}}{n+2}\right)^2}$. 

\noindent
Now, using Lemma \ref{L:ShiftComp} (with $M$ replaced by $M+1$) and Lemma \ref{quasimin} (with $\ell=\ell_\rho-\gamma\ell^{(h)}_{z_0,\theta\rho}$), and defining $c_{6,5}=\kappa_0\,4^{p_1+1}(M+1)^{p_1+2}$ ($\kappa_0$ from Lemma  \ref{quasimin}), we have
\begin{align*}
    \Psi_{1+|D\ell_\theta\rho|}(z_0,\theta\rho, \ell_{\theta\rho})
    =&
    \-int_{\Q_{\theta\rho}}\left(\left|
            \frac{u-\ell_{\theta\rho}}{\theta\rho}\right|^2
        +\V_{1+|D\ell_{\theta\rho}|}\left(\left|
            \frac{u-\ell_{\theta\rho}}{\theta\rho}\right|\right)\right)\, d z\\
    \le &
    4^{p_1+1}(M+1)^{p_1+2}
    \-int_{\Q_{\theta\rho}}\left(\left|
            \frac{u-\ell_{\theta\rho}}{\theta\rho}\right|^2
        +\V_{M}\left(\left|
            \frac{u-\ell_{\theta\rho}}{\theta\rho}\right|\right)\right)\, d z\\
    \le&
    c_{6,5}\-int_{\Q_{\theta\rho}}\left(\left|
    \frac{u-\ell_\rho-\gamma\ell^{(h)}_{z_0,\theta\rho}}{\theta\rho}\right|^2
        +\V_{M}\left(\left|
            \frac{u-\ell_\rho-\gamma\ell^{(h)}_{z_0,\theta\rho}}{\theta\rho}
        \right|\right)\right)\, d z\\
    =&
    c_{6,5}\-int_{\Q_{\theta\rho}}\left(\left|
    \frac{v-\gamma\ell^{(h)}_{z_0,\theta\rho}}{\theta\rho}\right|^2
        +\V_{M}\left(\left|
            \frac{v-\gamma\ell^{(h)}_{z_0,\theta\rho}}{\theta\rho}
        \right|\right)\right)\, d z\\
    \le&
    c_{6,3} \, c_{6,5} \left[\left(\frac{1}{4\theta}\right)^{n+p_1+2}
        \left(2+\Delta_2(\V_M)^3\right)\e
    +Cc_{6,4}\left(1+\Delta_2(\V_M)^3\right)\theta^2\right]\Psi_M(\rho)\\
    \le&
    \left[\theta^2+c_{6,6}\theta^2\right]\Psi_{M}(\rho),
    \end{align*}
    where $c_{6,6}=Cc_{6,3}c_{6,4}c_{6,5}(1+\Delta_2(\V_M)^3)$.
So, under the smallness assumption that $\Psi_{M}(\rho)\le\e$,
\[
    \Psi_{1+|D\ell_{\theta\rho}|}(z_0,\theta\rho,\ell_{\theta\rho})
    \le    \theta^{2\alpha}\Psi_{1+|D\ell_\rho|}(z_0,\rho,\ell_\rho).
\]
Combined with \eqref{ellthetarho}, we conclude that
\[
    |D\ell_{\theta\rho}|
    \le
    |D\ell_\rho|
    +(n+2)\left(\frac{1}{\theta}\right)^{n+3}\Psi_{1+|D\ell_\rho|}(\rho)^\frac{1}{2}.
\]
\end{proof}

\noindent
In the following lemma we will iterate the excess-decay estimate from the previous lemma.

\begin{lem}[Iteration Argument]\label{iteration}
Suppose that the assumptions, for $u$ and $\V$, in Lemma~\ref{L:DecEst} hold. Let $M_0>1$ and $0<\alpha<1$ be given. There exist $0<\e_0<\theta_0<1$ and $c_7=c_7(M_0,\theta_0,n,N,L/\nu,p_0,p_1)$ with the following property: given a standard parabolic cylinder $\Q_{\rho_0}(z_0)\subseteq\Omega_T$, if
\begin{equation*}
    1+|D\ell_{z_0,\rho_0}|\le M_0
\qquad\text{ and }\qquad
    \Psi_{1+|D\ell_{z_0,\rho_0}|}(z_0,\rho,\ell_{z_0,\rho_0})\leq\e_0,
\end{equation*}
then for each $j\in\N$, we have the following:
\begin{compactenum}[(a)]
    \item $\Psi_{1+|D\ell_{z_0,\theta_0^{j}\rho_0}|}(z_0,\theta_0^j\rho_0,\ell_{z_0,\theta_0^j\rho_0})
    \le
    \theta_0^{2j\alpha}\Psi_{1+|D\ell_{z_0,\rho_0}|}(z_0,\rho_0,\ell_{z_0,\rho_0})$,
    \item $|D\ell_{z_0,\theta_0^j\rho_0}|\le M_0-\theta_0^{j\alpha}$.
\end{compactenum}
Moreover,
\begin{equation}\label{E:CampEst}
    \-int_{Q_r(z_0)}
    \V_{1+|(Du)_{z_0,r}|}(|Du-(Du)_{z_0,r}|)\, d z
    \le
    c_7\left(\frac{r}{\rho_0}\right)^{2\alpha}
    \Psi_{1+|D\ell_{z_0,\rho_0}|}(z_0,\rho_0,\ell_{z_0,\rho_0}),
    \quad\text{ for all } 0<r\le\rho_0/2.
\end{equation}
\end{lem}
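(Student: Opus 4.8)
The proof proceeds by iterating the decay estimate in Lemma~\ref{L:DecEst} and then converting the resulting excess-decay into the Campanato-type estimate~\eqref{E:CampEst}.

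\textbf{Setup and choice of parameters.} First I would fix $M_0>1$ and $0<\alpha<1$, and apply Lemma~\ref{L:DecEst} with the value $M_0$ (so that all cylinders under consideration have $1+|D\ell|\le M_0$), obtaining $\e=\e(M_0)$ and $\theta=\theta(M_0,\alpha)$. I then set $\theta_0=\theta$ and choose $\e_0\le\e$ small enough that the geometric series of increments below converges to something bounded by $1$; specifically, since $\Psi\le\e_0$ at scale $\rho_0$ and Lemma~\ref{L:DecEst} gives a decay factor $\theta_0^{2\alpha}$ at each step, the total growth of $|D\ell|$ after $j$ steps is controlled by $(n+2)\theta_0^{-(n+3)}\sum_{i=0}^{j-1}\theta_0^{i\alpha}\e_0^{1/2}\le(n+2)\theta_0^{-(n+3)}(1-\theta_0^\alpha)^{-1}\e_0^{1/2}$, and I require this to be $\le 1-\theta_0^\alpha$ (or any fixed small quantity). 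Then I prove (a) and (b) simultaneously by induction on $j$: the inductive hypothesis $|D\ell_{z_0,\theta_0^j\rho_0}|\le M_0-\theta_0^{j\alpha}\le M_0-1$ ensures that the hypotheses of Lemma~\ref{L:DecEst} are met at scale $\theta_0^j\rho_0$ (using (a) at level $j$ for the smallness of the excess), so Lemma~\ref{L:DecEst} upgrades both statements to level $j+1$; the bound on $|D\ell_{z_0,\theta_0^{j+1}\rho_0}|$ follows from the second conclusion of Lemma~\ref{L:DecEst} together with (a) and the choice of $\e_0$.

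\textbf{From dyadic decay to the Campanato estimate.} For the last assertion, given $0<r\le\rho_0/2$, I pick $j\in\N$ with $\theta_0^{j+1}\rho_0<r\le\theta_0^{j}\rho_0$. The key point is to bound the averaged quantity $\-int_{\Q_r(z_0)}\V_{1+|(Du)_{z_0,r}|}(|Du-(Du)_{z_0,r}|)\,dz$ by the excess $\Psi_{1+|D\ell_{z_0,\theta_0^j\rho_0}|}(\theta_0^j\rho_0)$. This is done in two moves: first, by Remark~\ref{mediamin2} (comparing the minimizing mean $(Du)_{z_0,r}$ with $A=D\ell_{z_0,\theta_0^j\rho_0}$), and since $\Q_r\subseteq\Q_{\theta_0^j\rho_0}$ with $|\Q_{\theta_0^j\rho_0}|/|\Q_r|\le\theta_0^{-(n+2)}$, one gets
\begin{equation*}
\-int_{\Q_r(z_0)}\V_{1+|(Du)_{z_0,r}|}(|Du-(Du)_{z_0,r}|)\,dz
\le
\kappa_2\theta_0^{-(n+2)}\-int_{\Q_{\theta_0^j\rho_0}(z_0)}\V_{1+|D\ell_{z_0,\theta_0^j\rho_0}|}(|Du-D\ell_{z_0,\theta_0^j\rho_0}|)\,dz,
\end{equation*}
using also Lemma~\ref{L:ShiftComp} to pass between the shifts $1+|(Du)_{z_0,r}|$ and $1+|D\ell_{z_0,\theta_0^j\rho_0}|$ (both bounded by $M_0$). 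Second, the Caccioppoli inequality in Corollary~\ref{cacc3} applied on $\Q_{\theta_0^j\rho_0}$ with $\ell=\ell_{z_0,\theta_0^j\rho_0}$ bounds $\-int_{\Q_{\theta_0^j\rho_0/2}}\V_{M}(|Du-D\ell|)\,dz$ by $c_02^{n+p_1+2}\Psi_{M}(\theta_0^j\rho_0)$; a harmless doubling of the radius (or working at scale $\theta_0^{j-1}\rho_0$) absorbs the factor-of-two discrepancy. Combining these with conclusion (a), $\Psi_{1+|D\ell_{z_0,\theta_0^j\rho_0}|}(\theta_0^j\rho_0)\le\theta_0^{2j\alpha}\Psi_{1+|D\ell_{z_0,\rho_0}|}(\rho_0)$, and then using $\theta_0^{2j\alpha}\le\theta_0^{-2\alpha}(r/\rho_0)^{2\alpha}$ from the choice of $j$, produces~\eqref{E:CampEst} with $c_7=c_7(M_0,\theta_0,n,N,L/\nu,p_0,p_1)$ collecting all the constants.

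\textbf{Main obstacle.} The routine parts are the geometric-series bookkeeping in the parameter selection and the scaling factors $\theta_0^{-(n+2)}$. The delicate point is the passage in the last step between the three different reference quantities---$D\ell_{z_0,\theta_0^j\rho_0}$, $(Du)_{z_0,\theta_0^j\rho_0}$, and $(Du)_{z_0,r}$---and the three associated shifts of $\V$; this must be handled carefully via Remark~\ref{mediamin2}, Lemma~\ref{L:ShiftComp}, and the uniform $\Delta_2$-bound $\sup_{a\ge0}\Delta_2(\V_a)\le 2^{p_1}$ from Remark~\ref{phiap0p1}, keeping track that every shift parameter appearing is bounded above by $M_0$ so that Lemma~\ref{L:ShiftComp} applies with a constant depending only on $M_0,p_1$. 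One also needs that $|(Du)_{z_0,r}|$ itself is bounded by $M_0$, which follows by estimating $|(Du)_{z_0,r}-D\ell_{z_0,\theta_0^j\rho_0}|$ via Jensen and the Caccioppoli bound, again absorbed into the smallness of $\e_0$.
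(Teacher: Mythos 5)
Your proposal is correct and takes essentially the same route as the paper: the same induction on $j$ with the same smallness requirement $\e_0^{1/2}\le\theta_0^{\,n+3}(1-\theta_0^{\alpha})^{2}/(n+2)$, and the same chain (Remark~\ref{mediamin2} with $A=D\ell_{z_0,\theta_0^{j}\rho_0}$, enlargement of the cylinder by a factor controlled by $\theta_0^{-(n+2)}$, Corollary~\ref{cacc3}, and part $(a)$ plus $\theta_0^{2j\alpha}\lesssim (r/\rho_0)^{2\alpha}$) for the Campanato estimate, where the paper avoids your factor-of-two discrepancy simply by choosing $j$ with $\theta_0^{j+1}\rho_0<2r\le\theta_0^{j}\rho_0$. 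Note also that Remark~\ref{mediamin2} already permits an arbitrary $A$ with shift $1+|A|$, so your additional appeal to Lemma~\ref{L:ShiftComp} and to a bound on $|(Du)_{z_0,r}|$ is unnecessary (though harmless).
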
 

\begin{proof}
Parts $(a)$ and $(b)$ follow from an induction argument. With $0<\alpha<1$, $\rho=\rho_0$, and $M_0$ fixed, let $0<\e,\theta<1$ be provided by Lemma~\ref{L:DecEst}. Put
\begin{equation}\label{E:IterParam}
    \theta_0:=\theta,\qquad
    \e_0:=\min\left\{\e,\frac{\theta_0^{2(n+3)}(1-\theta_0^\alpha)^2}{(n+2)^2}\right\},\qquad\text{ and }\qquad
    \rho_j=\theta_0^{j}\rho_0,\quad\text{ for each }j\in\mathbb{N}.
\end{equation}
Clearly $|D\ell_{z_0,\rho_0}|\le M_0$, and the base case, $j=1$, immediately follows from Lemma~\ref{L:DecEst}. With $j\in\N$ given, suppose that $(a)$ and $(b)$ are both true for all $k=1,\dots,j$. We observe that $(a)$ implies
\begin{equation*}
    \Psi_{1+|D\ell_{z_0,\rho_j}|}(z_0,\rho_j,\ell_{z_0,\rho_j})
    \le
    \theta_0^{j\alpha}\Psi_{1+|D\ell_{z_0,\rho_0}|}(z_0,\rho_0,\ell_{z_0,\rho_0})
    \le\e_0\le\e.
\end{equation*}
By the inductive assumption,
\[
   |D\ell_{z_0,\rho_j}|\le M_0-\theta_0^{j\alpha}\le M_0,
\]
We may therefore use Lemma~\ref{L:DecEst}, with $\rho$ replaced with $\rho_j$ and the other parameters the same as in~\eqref{E:IterParam}, to obtain
\begin{align*}
    \Psi_{1+|D\ell_{z_0,\rho_{j+1}}|}(z_0,\rho_{j+1},\ell_{z_0,\rho_{j+1}})
    \le&
    \theta_0^{2\alpha}\Psi_{1+|D\ell_{z_0,\rho_j}|}(z_0,\rho_j,\ell_{z_0,\rho_j})\\
    \le&
    \theta_0^{2(j+1)\alpha}\Psi_{1+|D\ell_{z_0,\rho_0}|}(z_0,\rho_0,\ell_{z_0,\rho}).
\end{align*}
For part $(b)$, we have
\begin{align*}
    |D\ell_{z_0,\rho_{j+1}}|
    \le&
    |D\ell_{z_0,\rho_j}|
    +(n+2)\left(\frac{1}{\theta_0}\right)^{n+3}
        \Psi_{1+|D\ell_{z_0,\rho_j}|}(z_0,\rho_j,\ell_{z_0,\rho_j})^\frac{1}{2}\\
    \le&
    M_0-\theta_0^{j\alpha}
    +(n+2)\left(\frac{1}{\theta_0}\right)^{n+3}\theta_0^{j\alpha}\e_0^\frac{1}{2}
    \le
    M_0-\theta_0^{j\alpha}+(1-\theta_0^\alpha)\theta_0^{j\alpha}\\
    =&M_0-\theta^{(j+1)\alpha}.
\end{align*}
By induction, we deduce $(a)$ and $(b)$ for all $j\in\N$.


\noindent
It remains to verify~\eqref{E:CampEst}. Given $0<r\le\rho_0/2$, we may select $j\in\N\cup\{0\}$ such that $\rho_{j+1}<2r\le\rho_j$. Using Remark \ref{mediamin2} and Corollary \ref{cacc3},  we have
\begin{align*}
    \-int_{Q_r(z_0)}\V_{1+|(Du)_{z_0,r}|}(|Du-(Du)_{z_0,r}|)\, d z
    \le&
    \kappa_2\,\-int_{Q_r(z_0)}
        \V_{1+|D\ell_{z_0,\rho_{j}}|}(|Du-D\ell_{z_0,\rho_{j}}|)\, d z\\
    \le&
    \frac{\kappa_2}{\theta_0^{n+2}}\-int_{Q_{\rho_j/2}(z_0)}
        \V_{1+|D\ell_{z_0,\rho_{j}}|}(|Du-D\ell_{z_0,\rho_{j}}|)\, d z\\
    \le&
    c_02^{n+p_1+4}\left(\frac{\kappa_2}{\theta_0^{n+2}}\right)
        \Psi_{1+|D\ell_{z_0,\rho_j}|}(z_0,\rho_j,\ell_{z_0,\rho_j})\\
    \le&
    c_02^{n+p_1+4}\left(\frac{\kappa_2}{\theta_0^{n+2}}\right)
        \theta_0^{2j\alpha}
        \Psi_{1+|D\ell_{z_0,\rho_0}|}(z_0,\rho_0,\ell_{z_0,\rho_0})\\
    \le&
    c_7\left(\frac{r}{\rho_0}\right)^{2\alpha}
        \Psi_{1+|D\ell_{z_0,\rho_0}|}(z_0,\rho_0,\ell_{z_0,\rho_0}).
\end{align*}
Since $\Delta_2(\V,\V^*)$ depends on only $p_1$ and $p_0$, the lemma is proved.
\end{proof}

\section{Partial regularity}
\noindent
We are now in position to prove the main result of the paper.


\medskip  

\noindent {\it Proof of Theorem \ref{main}.}
Let $z_0\in\Omega_T$ be such that
$$
\liminf_{\rho\to 0}\-int_{\Q_\rho(z_0)}|V(Du)-(V(Du))_{z_0,\rho}|^2dz=0,
$$
and
\begin{equation}\label{E:DuBnd}
\limsup_{\rho\to 0}|(Du)_{z_0,\rho}|<+\infty.
\end{equation}
Using \eqref{equiVphi}, we deduce that
$$
\liminf_{\rho\to 0}\-int_{\Q_\rho(z_0)}\V_{1+|(Du)_{z_0,\rho}|}(|Du)-(Du)_{z_0,\rho}|)dz=0.
$$
Exploiting Lemma \ref{quasimin2} and Poincar\'e's inequality in Theorem \ref{poincare},  we get
\begin{equation}\label{exc1}
\begin{split}
&\-int_{\Q_\rho(z_0)}\V_{1+|D\ell_{z_0,\rho}|}\left(\left|\frac{u-\ell_{z_0,\rho}}{\rho}\right|\right)\,dz \\
&\quad \le \kappa_1\-int_{\Q_\rho(z_0)}\V_{1+|(Du)_{z_0,\rho}|}\left(\left|\frac{u-(u)_{z_0,\rho}-(Du)_{z_0,\rho}(x-x_0)}{\rho}\right|\right)\,dz\\
&\quad \le \kappa_1c_2\left[\-int_{\Q_\rho(z_0)}\V_{1+|(Du)_{z_0,\rho}|}(|Du-(Du)_{z_0,\rho}|)\,dz \right. \\
&\qquad \left. +\V_{1+|(Du)_{z_0,\rho}|}\left(\-int_{\Q_\rho(z_0)}\V'_{1+|(Du)_{z_0,\rho}|}(|Du-(Du)_{z_0,\rho}|)\,dz\right)\right].
\end{split}
\end{equation}
Thanks to \eqref{t3} and Jensen's inequality, we may write
\begin{equation}\label{exc11}
\-int_{\Q_\rho(z_0)}\V'_{1+|(Du)_{z_0,\rho}|}(|Du-(Du)_{z_0,\rho}|)\,dz\lesssim (\V^*_{1+|(Du)_{z_0,\rho}|})^{-1}\left(\-int_{\Q_\rho(z_0)}\V_{1+|(Du)_{z_0,\rho}|}(|Du-(Du)_{z_0,\rho}|)\,dz\right).
\end{equation}
On the other hand, Theorem \ref{sp2} implies
\begin{equation}\label{exc2}
\begin{split}
&\-int_{\Q_\rho(z_0)}\left|\frac{u-\ell_{z_0,\rho}}{\rho}\right|^2\,dz\\
&\le \-int_{\Q_\rho(z_0)}\left|\frac{u-(u)_{z_0,\rho}-(Du)_{z_0,2\rho}(x-x_0)}{\rho}\right|^2\,dz\\
&\le c_4 \left[\-int_{\Q_{2\rho}(z_0)}\V_{1+|(Du)_{z_0,2\rho}|}(|Du-(Du)_{z_0,2\rho}|)dz+\left(\-int_{\Q_{2\rho}(z_0)}|Du-(Du)_{z_0,2\rho}|^{p_0}dz\right)^{\frac{2}{p_0}}\right.\\
&\qquad +\V_{1+|(Du)_{z_0,2\rho}|}\left(\-int_{\Q_{2\rho}(z_0)}\V'_{1+|(Du)_{z_0,2\rho}|}(|Du-(Du)_{z_0,2\rho}|)dz\right) \\
&\qquad \left.+\left(\-int_{\Q_{2\rho}(z_0)}\V'_{1+|(Du)_{z_0,2\rho}|}(|Du-(Du)_{z_0,2\rho}|)dz\right)^{2}\right].
\end{split}
\end{equation}
We can use Lemma \ref{L:OrliczPower} to control the upper bound's second integral:
\begin{equation}
\begin{split}\label{exc22}
\-int_{\Q_{2\rho}(z_0)}|Du-(Du)_{z_0,2\rho}|^{p_0}dz&\le C\left(\frac{1}{\V''(1+|(Du)_{z_0,2\rho}|)}\-int_{\Q_{2\rho}(z_0)}\V_{1+|(Du)_{z_0,2\rho}|}(|Du-(Du)_{z_0,2\rho}|)\,dz\right)^{\frac{p_0}{2}}\\
&+C\frac{1}{\V_{1+|(Du)_{z_0,2\rho}|}(1)}\-int_{\Q_{z_0,2\rho}}\V_{1+|(Du)_{z_0,2\rho}|}(|Du-(Du)_{z_0,2\rho}|)\,dz.
\end{split}
\end{equation}
Finally, from \eqref{touse}
\begin{equation}\label{exc3}
\begin{split}
|D\ell_{z_0,\rho}-(Du)_{z_0,\rho}|&\le (n+2)\-int_{\Q_\rho(z_0)}\left|\frac{u-(u)_{z_0,\rho}-(Du)_{z_0,\rho}(x-x_0)}{\rho}\right|\,dz\\
&\lesssim (\V_{1+|(Du)_{z_0,\rho}|})^{-1}\left(\-int_{\Q_\rho(z_0)}\V_{1+|(Du)_{z_0,\rho}|}\left(\left|\frac{u-(u)_{z_0,\rho}-(Du)_{z_0,\rho}(x-x_0)}{\rho}\right|\right)\,dz\right),
\end{split}
\end{equation}
which in turn can be bounded via \eqref{exc1} and \eqref{exc11}. Let $\e_0>0$ be as defined in~\eqref{E:IterParam}. Keeping in mind the definition of $z_0$, the estimates \eqref{exc1}, \eqref{exc2}, and \eqref{exc3}, supported by~\eqref{E:DuBnd}, \eqref{exc11}, and \eqref{exc22}, imply the existence of $M_0> 1$ and a radius $R_0>0$ such that $|D\ell_{z_0,R_0}|<M_0-1$ and $\Psi_{1+|D\ell_{z_0,R_0}|}(z_0,R_0,\ell_{z_0,R_0})<\e_0$. By the absolute continuity of the integrals, there exists $R_1<R_0$ such that, for any $z\in\Q_{R_1}(z_0)$ we have 
$$
1+|D\ell_{z,R_0}|<M_0\ \ \ \ \ \ \hbox{ and }\ \ \ \ \ \ \Psi_{1+|D\ell_{z,R_0}|}(z,R_0,\ell_{z,R_0})< {\e_0}.
$$
Applying Lemma \ref{iteration} to each point $z\in\Q_{R_1}(z_0)$, we deduce that, for any $r\le R_0/2$,
\[
\int_{\Q_r(z)}|V(Du)-(V(Du))_{z,r}|^2dz\sim\int_{\Q_r(z)}\V_{1+|(Du)_{z,r}|}(|Du-(Du)_{z,r}|)dz\le  C(M,\theta_0)\, \frac{r^{n+2+2\alpha}}{R_0^{2\alpha}}\e_0.
\]
This means that $V(Du)$ belongs to the parabolic Campanato space ${\mathcal L}^{2,\frac{2\alpha}{n+2}}(\Q_{R_1}(z_0),\R^{Nn})$ and by the usual embedding we have $\displaystyle{V(Du)\in C^{0,\frac{\alpha}{2},\alpha}(\Q_{R_1}(z_0),\R^{Nn})}$.
\qed

\begin{remark}
Note, as indicated in Section~\ref{S:Intro}, the H\"older continuity of $V(Du)$ implies the H\"older continuity of $Du$ with a different exponent depending on $\V$. 
\end{remark}

\addcontentsline{toc}{section}{\refname}

\end{document}